\renewcommand{\mathbf}{\mathbold}
\numberwithin{equation}{section}
\newtheorem{Theorem}[equation]{Theorem}
\newtheorem{Proposition}[equation]{Proposition}
\newtheorem{Lemma}[equation]{Lemma}
\newtheorem{Corollary}[equation]{Corollary}
\newtheorem{Problem}[equation]{Problem}
\theoremstyle{definition}
\newtheorem{Remark}[equation]{Remark}
\newtheorem{eg}[equation]{Example}
\newtheorem{Definition}[equation]{Definition}
\newcommand{\cA}{\mathcal{A}}
\newcommand{\cF}{\mathcal{F}}
\newcommand{\bG}{\mathbf{G}}
\newcommand{\cK}{\mathcal{K}}
\newcommand{\bL}{\mathbf{L}}
\newcommand{\cM}{\mathcal{M}}
\newcommand{\bN}{\mathbf{N}}
\newcommand{\cN}{\mathcal{N}}
\newcommand{\cO}{\mathcal{O}}
\newcommand{\cR}{\mathcal{R}}
\newcommand{\bT}{\mathbf{T}}
\newcommand{\cW}{\mathcal{W}}
\newcommand{\cX}{\mathcal{X}}
\newcommand{\cZ}{\mathcal{Z}}
\newcommand{\bg}{\mathbf{g}}
\newcommand{\bm}{\mathbf{m}}
\newcommand{\bu}{\mathbf{u}}
\newcommand{\bv}{\mathbf{v}}
\newcommand{\bw}{\mathbf{w}}
\newcommand{\bz}{\mathbf{z}}
\renewcommand{\AA}{\mathbb{A}}
\newcommand{\CC}{\mathbb{C}}
\newcommand{\GG}{\mathbb{G}}
\newcommand{\PP}{\mathbb{P}}
\newcommand{\WW}{\mathbb{W}}
\newcommand{\XX}{\mathbb{X}}
\newcommand{\ZZ}{\mathbb{Z}}
\renewcommand{\phi}{\varphi}
\renewcommand{\emptyset}{\varnothing}
\renewcommand{\tilde}[1]{\widetilde{#1}}
\def\Ddots{\mathinner{\mkern1mu\raise\p@
\vbox{\kern7\p@\hbox{.}}\mkern2mu
\raise4\p@\hbox{.}\mkern2mu\raise7\p@\hbox{.}\mkern1mu}}
\newcommand{\hooklongrightarrow}{\lhook\joinrel\longrightarrow}
\DeclareMathOperator{\Hom}{Hom}
\DeclareMathOperator{\Spec}{Spec}
\newcommand{\GL}{\mathbf{GL}}
\newcommand{\SL}{\mathbf{SL}}
\newcommand{\Gr}{\mathrm{Gr}}
\newcommand{\fsl}{\mathfrak{sl}}
\newcommand{\suchthat}{:}
\newcommand{\val}{\mathrm{val}}
\newcommand{\pt}{\mathrm{pt}}
\newcommand{\kk}{\Bbbk}
\renewcommand{\bv}{\mathbf{v}} % Notation for the dimension vectors of gauge nodes
\renewcommand{\bw}{\mathbf{w}} % Notation for the dimension vectors of framing nodes
\newcommand{\su}{\mathsf{u}} % Notation for the shift operators
\newcommand{\sv}{\mathsf{v}} 
\newcommand{\sw}{\mathsf{w}} 
\newcommand{\sm}{\mathsf{m}} 
\newcommand{\loc}{\mathrm{loc}} % Notation for localization of a ring
\newcommand{\M}{\mathsf{M}}
\newcommand{\source}{\mathsf{s}}
\newcommand{\target}{\mathsf{t}}
\newcommand{\arrowset}{E}
\newcommand{\bNone}{\bN_1}
\newcommand{\bNtwo}{\bN_2}
\newcommand{\simplecoroot}{\alpha^\vee}
\newcommand{\simpleroot}{\alpha}
\newcommand{\fundweight}{\Lambda}
\newcommand{\weightlattice}{P}
\newcommand{\corootlattice}{Q^\vee}
\newcommand{\coweightlattice}{P^\vee}
\newcommand{\zastavap}{\cZ_+}
\newcommand{\zastavam}{\cZ_-}
\newcommand{\bbzastavap}{\ZZ_+}
\newcommand{\bbzastavam}{\ZZ_-}
\begin{document}

\title[FMOs and embeddings of Kac-Moody affine Grassmannian slices]{Fundamental monopole operators and embeddings of Kac-Moody affine Grassmannian slices}
\author{Dinakar Muthiah}
\address{D.~Muthiah:
School of Mathematics and Statistics,
University of Glasgow,
University Place,
Glasgow G12 8QQ,
Scotland
}
\email{dinakar.muthiah@glasgow.ac.uk }
\author{Alex Weekes}
\address{A.~Weekes: Department of Mathematics and Statistics, University of Saskatchewan, Canada}
\email{weekes@math.usask.ca}

\maketitle

\begin{abstract}

  Braverman, Finkelberg, and Nakajima define \emph{Kac-Moody affine Grassmannian slices} as Coulomb branches of $3d$ $\cN=4$ quiver gauge theories and prove that their Coulomb branch construction agrees with the usual loop group definition in finite ADE types. The Coulomb branch construction has good algebraic properties, but its geometry is hard to understand in general.

 In finite types, an essential geometric feature is that slices embed into one another. We show that these embeddings are compatible with the \emph{fundamental monopole operators} (FMOs), remarkable regular functions arising from the Coulomb branch construction.  Beyond finite type these embeddings were not known, and our second result is to construct them for all symmetric Kac-Moody types. We show that these embeddings respect Poisson structures under a mild ``goodness'' hypothesis.  These results give an affirmative answer to a question posed by Finkelberg in his 2018 ICM address and demonstrate the utility of FMOs in studying the geometry of Kac-Moody affine Grassmannian slices, even in finite types.

\end{abstract}

\section{Introduction}

\subsection{Kac-Moody Affine Grassmannian slices} 
Braverman, Finkelberg, and Nakajima \cite{BFN2} proved the spectacular result that affine Grassmannian slices of finite ADE type arise as Coulomb branches of $3d$ $\cN=4$ quiver gauge theories. These slices $\overline{\cW}_\mu^\lambda$ model the singularities between spherical Schubert varieties in the affine Grassmannian, and thereby encode essential aspects of the geometric Satake correspondence.  Here $\lambda$ and $\mu$ are coweights with $\lambda$ dominant and $\lambda \geq \mu$.

Remarkably, the Braverman-Finkelberg-Nakajima construction works for arbitrary quivers. One can thus take the corresponding Coulomb branches as a definition of \emph{Kac-Moody affine Grassmannian slices} $\overline{\cW}_\mu^\lambda$ for general symmetric Kac-Moody types.  Starting with the work of Braverman and Finkelberg \cite{BF}, these slices are proposed as a fruitful setting for developing the geometric Satake correspondence for Kac-Moody groups \cite{Nakajima-Branching,
  BF-II, BF-III,
  BFN2,FinkelbergICM, NakSatake}.

The Coulomb branch is defined as the spectrum of a certain convolution algebra (see \S \ref{subsection: Coulomb branches in general} and the references within). Despite the abstract definition, the construction is well suited for deducing algebraic properties of Kac-Moody affine Grassmannian slices (via general results for Coulomb branches from \cite{BFN1}).  However, the geometry of these varieties remains quite mysterious.  For example, in Finkelberg's 2018 ICM address \cite[\S 5.6 and \S 6.4]{FinkelbergICM}, he sought certain embeddings $\overline{\cW}_\mu^{\lambda'} \hookrightarrow \overline{\cW}_\mu^\lambda$ for general Kac-Moody types. These embeddings are well-known in finite types, but even in this case the relationship with Coulomb branches has been mysterious (see \S \ref{intro: finite type} below for more detail).

The goal of this paper is to remedy this situation. In finite types, we show that the above closed embeddings respect certain functions called \emph{fundamental monopole operators} which arise naturally in the Coulomb branch picture (see Theorem \ref{thm:finite-type-embeddings-compatible-with-fmos-intro}).  For general symmetric Kac-Moody type we exhibit closed embeddings $\overline{\cW}_\mu^{\lambda'} \hookrightarrow \overline{\cW}_\mu^\lambda$ (see Theorem \ref{thm:infinite-type-embedding-intro}), and study their interaction with Poisson structures (see Theorem \ref{thm:poisson-subvariety-intro}).

\subsection{Embeddings of slices in finite type}
\label{intro: finite type}
Let us restrict our attention to finite ADE types.  One geometric phenomenon that is obvious from the loop group-theoretic picture of affine Grassmannian slices is the fact that slices embed into one another: for $\lambda'$ another dominant coweight with $\lambda \geq \lambda' \geq \mu$, there is a closed embedding:
\begin{equation}
  \label{eq:d:109}
 \overline{\cW}^{\lambda'}_\mu \hookrightarrow  \overline{\cW}^\lambda_\mu
\end{equation}
For readers familiar with affine Grassmannians, this is analogous to and indeed a direct consequence of the fact that the spherical Schubert variety $\overline{\Gr^{\lambda'}}$ embeds into $\overline{\Gr^\lambda}$. 

This closed embedding, which is a basic feature of the geometry, is mysterious from the perspective of Coulomb branches. Moreover, any Coulomb branch construction of this embedding must necessarily be subtle: all of the basic geometric operations on the convolution algebra defining a Coulomb branch lift to its quantization, but algebraically  one can easily see that the closed embedding \eqref{eq:d:109} need not lift to the quantization (see Remark \ref{rem:poisson-subvariety-does-not-quantize}).

\subsection{Fundamental monopole operators}
Our first goal is to understand how the closed embedding \eqref{eq:d:109} interacts with the Coulomb branch descriptions of both sides. To this end, we study its effect on \emph{fundamental monopole operators} (FMOs).  The FMOs are regular functions on $\overline{\cW}_\mu^\lambda$ that are natural from the Coulomb branch perspective, corresponding to certain algebraic cycles  in the convolution algebra defining the Coulomb branch.  A priori, these functions have nothing to do with the closed embedding \eqref{eq:d:109}. Nonetheless, we prove the following result:

\begin{Theorem}
  \label{thm:finite-type-embeddings-compatible-with-fmos-intro}
Under the closed embedding \eqref{eq:d:109}, FMOs map to FMOs or to zero.  
\end{Theorem}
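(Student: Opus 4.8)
The plan is to make the closed embedding \eqref{eq:d:109} concrete enough that its effect on FMOs can be computed directly. First I would recall the explicit description of FMOs on $\overline{\cW}_\mu^\lambda$: after restricting to an appropriate open locus (the ``reduced'' or ``abelianized'' locus where the relevant line bundles trivialize), the Coulomb branch algebra has a presentation in terms of variables $w_{i,r}$ together with monopole-type generators, and the FMOs become explicit Laurent-polynomial expressions in these coordinates. The key point is that an FMO is indexed by a choice of a fundamental coweight direction together with some data recording the value of $\mu$, and its leading behaviour is governed by a product running over the ``colours'' at a given node. I would set up matching coordinate systems on $\overline{\cW}_\mu^{\lambda'}$ and $\overline{\cW}_\mu^\lambda$, compatible with the embedding.

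The second step is to pin down the embedding \eqref{eq:d:109} in these coordinates. Since $\lambda$ and $\lambda'$ are dominant with $\lambda \geq \lambda' \geq \mu$, the difference $\lambda - \lambda'$ is a sum of simple coroots, and the embedding should correspond, at the level of Coulomb branch data, to specializing (setting to zero, or to a fixed value) the extra equivariant parameters that distinguish the quiver gauge theory for $\lambda$ from that for $\lambda'$ — concretely, setting the ``new'' framing/mass parameters to suitable values. I would verify that this specialization map is well-defined on the full Coulomb branch (not just the abelianized locus) by checking it is compatible with the convolution algebra structure, and that it recovers the classical embedding $\overline{\Gr^{\lambda'}} \hookrightarrow \overline{\Gr^\lambda}$ after passing to the loop-group description via \cite{BFN2}. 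This identification of the embedding with a parameter specialization is where I expect most of the bookkeeping to live.

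The third step is the actual computation: apply the specialization map of step two to the explicit FMO formulas of step one. An FMO for $\overline{\cW}_\mu^\lambda$ is, up to normalization, a monomial in monopole generators times a polynomial factor built from the parameters being specialized; under the specialization this polynomial factor either survives intact — in which case one recognizes the result as the corresponding FMO on $\overline{\cW}_\mu^{\lambda'}$, possibly reindexed — or it acquires a zero because one of its linear factors is killed, in which case the whole FMO restricts to zero. The dichotomy in the statement (``map to FMOs or to zero'') is thus forced by whether a specific linear factor attached to the $\lambda/\lambda'$ discrepancy appears in the given FMO's formula. I would organize this by cases on the node $i$ and on whether the FMO ``sees'' the specialized parameter.

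The main obstacle will be step two: identifying the embedding \eqref{eq:d:109} intrinsically on the Coulomb branch side with a clean algebraic operation. The embedding is transparent from the loop-group picture but, as the introduction stresses, opaque from the Coulomb side — and it notably does \emph{not} lift to the quantization (Remark \ref{rem:poisson-subvariety-does-not-quantize}), so one cannot simply transport a quantum-level construction. I would therefore likely proceed by first defining a candidate specialization map purely algebraically, proving it is a surjection of the appropriate algebras with the right kernel, and only afterwards checking it agrees with \eqref{eq:d:109} by comparing on a dense open subset where the loop-group description is available. Once that identification is secured, Theorem \ref{thm:finite-type-embeddings-compatible-with-fmos-intro} follows from the explicit formulas essentially by inspection.
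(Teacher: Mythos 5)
There is a genuine gap, and it sits exactly where you locate the ``bookkeeping''. Your step three assumes an FMO is ``a monomial in monopole generators times a polynomial factor'' and that its restriction along \eqref{eq:d:109} can be computed by substituting the specialized parameters into the explicit formula. But the FMO \eqref{eq:positive-FMOs-images-under-GKLO} is a \emph{sum} over tuples $\Gamma$ of rational terms whose denominators are products of factors $(w_{i,r}-w_{i,s})$, and the image of \eqref{eq:d:109} lies inside the vanishing locus of precisely these denominators: on that image each $Q_i(z)$ acquires a root at $z=0$ of multiplicity equal to the number of discarded variables, so those variables collide and many factors $w_{i,r}-w_{i,s}$ vanish identically. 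Hence the naive specialization is $0/0$ term by term, and ``inspection'' does not determine the restriction; this is exactly the difficulty the paper emphasizes after Theorem \ref{thm:compatibility-of-FMOs-with-embeddings-of-zastava}. The paper circumvents it by passing to zastava spaces and factoring the closed embedding through the adding-defect map $\zastavap^{\gamma'}\times\AA^{(\gamma'')}\rightarrow\zastavap^{\gamma}$ (Definition \ref{def:adding-defect}): on the image of this larger map the denominators are not identically zero, so the FMOs can be restricted honestly at the localized level (Theorems \ref{thm:GKLO-commutative-square-for-adding-defect} and \ref{thm:compatibility-of-FMOs-with-adding-defect-map-of-zastava}), and only afterwards does one specialize the added divisor to be supported at $0$; negative FMOs are then handled by the Chevalley anti-involution (\S \ref{sec: Chevalley}). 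Some such deformation/degeneration step is unavoidable in an argument along your lines, and your proposal contains no substitute for it.

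Your step two is also off target. Identifying \eqref{eq:d:109} with a ``specialization of the new framing/mass parameters'' on the Coulomb side is neither needed for the finite-type statement nor correct as described: in finite type the embedding is already given group-theoretically, and the paper's proof never realizes it as a Coulomb-branch operation, using only the birational coordinates $Q_i(z)$, $P_i(z)$ and the zastava factorization. When the paper does construct a Coulomb-side map (the Kac-Moody case, Theorem \ref{thm:big-tikz-diagram-defining-closed-embedding}), it is not a parameter specialization — the two slices have different gauge groups and framings — but a composite of Levi restriction, a ``cone point'' quotient available only under the conicity hypothesis (and which does not lift to the quantization), two Fourier transforms, and forgetting matter; even then the map is a priori only rational, with regularity deduced from the FMO computation itself, precisely because the relations among FMOs are unknown (Problem \ref{problem:fmo-relations}). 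So ``define a candidate specialization map and prove it is a surjection with the right kernel'' restates the problem rather than solving it.
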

See Theorem \ref{thm:compatibility-of-FMOs-with-embeddings-of-slices} in the main text for the precise formula. By work of the second author \cite{Weekes1}, the FMOs generate the coordinate ring of $\overline{\cW}^\lambda_\mu$, so the closed embedding \eqref{eq:d:109} is completely characterized by this theorem.

There are explicit rational expressions (see \eqref{def: FMOs}) for the FMOs, which arise from the localization theorem.  Unfortunately, the denominators in these rational expressions vanish on the image of the closed embedding \eqref{eq:d:109}, so it is not clear how they restrict.  To prove the theorem, we move the problem to studying the ``adding defect'' map of \emph{zastava spaces} (see \S \ref{sec: adding defect}) where the denominators can be controlled.

\subsection{Embeddings of slices in Kac-Moody type}
Theorem \ref{thm:finite-type-embeddings-compatible-with-fmos-intro} suggests a natural guess for how to define a closed embedding of Kac-Moody affine Grassmannian slices: we should map FMOs using the same formula. Unfortunately, we do not know the relations among the FMOs, so it is unclear that such a map extends to an algebra homomorphism. Indeed, this is an important open problem.

\begin{Problem}
  \label{problem:fmo-relations}
  Describe a complete set of relations among the fundamental monopole operators.
\end{Problem}

If we had a solution to this problem, we could proceed by simply stating where FMOs map and checking relations. Instead, we  construct the map another way, which is our second main result.

\begin{Theorem}
  \label{thm:infinite-type-embedding-intro}
  Suppose further that $\lambda'$ satisfies a certain ``conicity condition'' with respect to $\lambda$. Then there is a closed embedding of Kac-Moody affine Grassmannian slices
  \begin{equation}
    \label{eq:d:110}
 \overline{\cW}^{\lambda'}_\mu \hookrightarrow  \overline{\cW}^\lambda_\mu
  \end{equation}
  under which the FMOs map to FMOs or to zero.
\end{Theorem}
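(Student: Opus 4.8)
Since the fundamental monopole operators generate $\CC[\overline{\cW}_\mu^\lambda]$ by \cite{Weekes1}, producing a closed embedding as in \eqref{eq:d:110} is the same as producing a surjective algebra homomorphism $\iota^* \colon \CC[\overline{\cW}_\mu^\lambda] \twoheadrightarrow \CC[\overline{\cW}_\mu^{\lambda'}]$, and Theorem \ref{thm:finite-type-embeddings-compatible-with-fmos-intro} predicts exactly which FMO each FMO should go to. The naive plan --- declare the images of the FMOs by the finite-type formula and verify relations --- is blocked by Problem \ref{problem:fmo-relations}. The plan is instead to build the morphism $\iota$ geometrically, by transporting the ``adding defect'' construction for zastava spaces of \S\ref{sec: adding defect}, which makes sense in every symmetric Kac--Moody type, to the Coulomb-branch side; the FMO formula will then be \emph{extracted} from the construction rather than imposed on it.

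First I would set up the zastava model in Kac--Moody generality. Recall that in finite type the proof of Theorem \ref{thm:finite-type-embeddings-compatible-with-fmos-intro} realizes both the embedding \eqref{eq:d:109} and the FMOs inside a zastava space $\cZ^{\lambda-\mu}$, where the troublesome denominators of \eqref{def: FMOs} become invertible and where the ``adding defect'' map $\cZ^{\lambda'-\mu}\hookrightarrow\cZ^{\lambda-\mu}$ is a transparent closed embedding. Every ingredient here --- the zastava $\cZ^\alpha$, their factorization structure, the defect maps, and the regular functions corresponding to FMOs --- is available for symmetric Kac--Moody types. So I would first reprove, in this generality, the comparison between an open chart of $\overline{\cW}_\mu^\lambda$ and the relevant zastava space, together with the identification of FMOs as explicit functions there, and then \emph{define} the candidate morphism $\iota$ to be the one induced by the defect map.

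The substance is to show that this $\iota$ actually lands in, and surjects onto, the Coulomb-branch slice $\overline{\cW}_\mu^{\lambda'}$ --- i.e.\ that the map of the ``big'' (zastava / open-chart) objects descends to the honest slices as a genuine morphism of varieties, not merely a rational map --- and this is where the conicity condition enters. Passing from $\lambda$ to $\lambda'$ changes the gauge and framing data $(\bv,\bw) \rightsquigarrow (\bv',\bw')$ underlying the Coulomb-branch presentations, and the conicity hypothesis is what constrains $\lambda-\lambda'$ so that this shift is of ``positive'' type: morally, the flavor parameters move in a direction compatible with the conical $\CC^\times$-action, and the defect map introduces no poles along the locus in question, so that the zastava-level map restricts correctly. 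Granting this, I would compute $\iota^*$ on each FMO directly in the zastava model --- a rational-function manipulation just as in finite type --- obtaining that an FMO of $\overline{\cW}_\mu^\lambda$ goes either to an FMO of $\overline{\cW}_\mu^{\lambda'}$ or to $0$, and that every FMO of the target is hit. Since the FMOs generate $\CC[\overline{\cW}_\mu^{\lambda'}]$, this forces $\iota^*$ to be surjective, so $\iota$ is a closed embedding with the asserted effect on FMOs.

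The hard part will be precisely that middle step. In finite type one has the loop-group model of $\overline{\cW}_\mu^\lambda$ to certify that the zastava computation genuinely reconstructs the geometric embedding \eqref{eq:d:109}; in Kac--Moody type there is no such external model, so one must argue intrinsically --- using only the Coulomb-branch definition, the generation theorem of \cite{Weekes1}, and the factorization/defect structure of zastava --- that the morphism produced is well defined on the nose rather than on a localization. Isolating the exact hypothesis on $\lambda'$ (the conicity condition) that makes this intrinsic descent go through is the crux, and keeping the denominators of \eqref{def: FMOs} under control along the image is the technical heart, exactly as in the finite-type argument but now with the loop-group crutch removed.
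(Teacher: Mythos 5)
There is a genuine gap, and it sits exactly at the step you flag as ``the crux'' without resolving it. Your plan is to induce the map $\kk[\overline{\cW}^\lambda_\mu]\to\kk[\overline{\cW}^{\lambda'}_\mu]$ from the adding-defect/zastava picture, where the denominators are controlled. But the zastava space only sees half of the slice: $\kk[\zastavap^{\lambda-\mu}]$ is generated by the positive FMOs, $\kk[\zastavam^{\lambda-\mu}]$ by the negative ones, and $\kk[\overline{\cW}^\lambda_\mu]$ is a quotient of the coordinate ring of $\zastavap^{\lambda-\mu}\times_{\AA^{(\lambda-\mu)}}\zastavam^{\lambda-\mu}$ by an ideal that is not known (this is Problem \ref{problem:fmo-relations}). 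So declaring the map on an open chart, or on the zastava spaces, and hoping it ``descends'' to the slice is not a weaker version of the argument --- it \emph{is} the naive plan of prescribing images of generators, just repackaged, and it is blocked for the same reason: one cannot verify that the unknown relations are preserved. Indeed the paper constructs the Kac--Moody adding-defect and zastava embeddings exactly as you propose (this part is easy, and positive FMOs alone generate there), but explicitly notes that this does not yield the slice embedding. Your intrinsic-descent step has no mechanism behind it.

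The missing idea is also where the conicity condition actually enters, and your reading of it (``the defect map introduces no poles along the locus in question'') is not what it does. In the paper's proof one restricts to the Levi $\bG'\times\bG''$ of the quiver gauge group, identifies $\cA(\bG'',\bN''_{\bw})=\cA(\lambda,\lambda')=\kk[\overline{\cW}^\lambda_{\lambda'}]$, and uses conicity --- via the monopole-formula grading --- to guarantee that this ring is conical, hence admits a cone-point character $\cA(\lambda,\lambda')\twoheadrightarrow\kk$ (Lemma \ref{lemma: first lemma appendix}, Corollary \ref{appendix:main result}). That character, which has no natural Coulomb-branch or group-theoretic construction and does not lift to the quantization (Remark \ref{rem:poisson-subvariety-does-not-quantize}), is the new ingredient. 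It is then combined with K\"unneth, two Fourier transforms rearranging the matter $\bN^{\mathrm{mix}}$, and a forget-matter map $\bz^\ast$ to produce an a priori only rational map $\cA(\bG,\bN)\dashrightarrow\cA(\bG',\bN^{(3)})=\cA(\lambda',\mu)$ as in diagram \eqref{eq: big tikz diagram}; regularity, surjectivity, and the FMO formula are then all extracted simultaneously by computing the images of the minuscule monopole operators through this composite. Without the cone-point character (or some substitute for it), your construction cannot get off the ground, and the role you assign to conicity would not supply it.
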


See Theorem \ref{thm:closed-embedding-of-KM-slices} and \ref{thm:big-tikz-diagram-defining-closed-embedding} for precise statements. We mention that the conicity condition (see Definition \ref{def:conical-coweights}) is automatic in finite-type, and it holds in affine type for all non-zero levels, which are the cases of primary interest (see Theorem \ref{thm: conicity finite affine}). 
We also note that Theorem \ref{thm:infinite-type-embedding-intro} does not subsume Theorem \ref{thm:finite-type-embeddings-compatible-with-fmos-intro}. We only know that the two maps agree in finite type because of our result showing that FMOs map the same way in both cases.

To construct the map in the theorem we make use of several natural Coulomb branch operations from \cite{BFN1}. However, we require an additional map. The coordinate rings of Coulomb branches carry gradings corresponding to the \emph{monopole formula}, and the conicity condition guarantees that $\overline{\cW}^{\lambda}_{\lambda'}$  is a cone with respect to this grading (see Lemma \ref{lemma: first lemma appendix}). We require the embedding of the cone point into this cone. We do not know of any natural Coulomb branch construction of the cone point (e.g. it does not lift to the quantization in general) but we are able to proceed by simply appealing to the grading. Finally, we verify that the above map has the correct behavior on FMOs. Also, our map is a priori only rationally defined, and only because of our FMO calculation do we conclude that the map is regular.

Theorems \ref{thm:finite-type-embeddings-compatible-with-fmos-intro} and \ref{thm:infinite-type-embedding-intro} show that the geometry of Kac-Moody affine Grassmannian slices is closely coupled with the FMOs, and we propose that they should be standard tools for future investigation. In particular, understanding Problem \ref{problem:fmo-relations} is crucial.

\subsection{Symplectic leaves and Poisson subvarieties}

In \cite{Weekes2}, the second author proved that Coulomb branches of quiver gauge theories have symplectic singularities for all quivers with no loops and no multiple edges. By work of Kaledin \cite{Kaledin}, we conclude that these Coulomb branches have finitely many closed Poisson subvarieties, and their symplectic leaves are obtained precisely as the smooth loci of irreducible closed Poisson subvarieties. We prove the following.

\begin{Theorem}[Theorem \ref{thm:closed-poisson-embedding-of-KM-slices} in the main text]
  \label{thm:poisson-subvariety-intro}
  Assume that $\lambda'$ satisfies a certain ``good condition'' for $\lambda$. Then the closed embedding
  \begin{equation}
    \label{eq:d:64}
 \overline{\cW}^{\lambda'}_\mu \hookrightarrow  \overline{\cW}^\lambda_\mu
  \end{equation}
  is compatible with Poisson structures. In particular, the inclusion of the smooth locus $(\overline{\cW}_\mu^{\lambda'})^{reg} \hookrightarrow \overline{\cW}_\mu^\lambda$ defines a symplectic leaf.
\end{Theorem}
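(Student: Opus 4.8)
The plan is to realize the embedding \eqref{eq:d:64} as a composite of Poisson morphisms. Recall that in the proof of Theorem~\ref{thm:infinite-type-embedding-intro} the embedding is assembled --- via the diagram of Theorem~\ref{thm:big-tikz-diagram-defining-closed-embedding} --- out of two kinds of maps: morphisms of Coulomb branches coming from the functorial operations of \cite{BFN1} (Hamiltonian reduction by a torus and the maps attached to morphisms of the gauge-theoretic data), together with a single copy of the cone-point inclusion $\iota\colon \{\pt\}\hookrightarrow \overline{\cW}^\lambda_{\lambda'}$ supplied by the conical grading (Lemma~\ref{lemma: first lemma appendix}). Maps of the first kind are automatically Poisson, since each lifts to a morphism of the corresponding quantizations and hence respects brackets in the classical limit. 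So the entire content is to show that, under the good hypothesis, $\iota$ is a closed Poisson embedding; granting this, the composite is Poisson and \eqref{eq:d:64} is a closed Poisson embedding.

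First I would reduce the Poisson property of $\iota$ to a statement about the monopole grading. By conicity the cone point of $\overline{\cW}^\lambda_{\lambda'}$ is cut out by the maximal homogeneous ideal $I=\bigoplus_{d>0}\CC[\overline{\cW}^\lambda_{\lambda'}]_d$, so $\iota$ is Poisson exactly when $I$ is a Poisson ideal. Since the Poisson bracket on a Coulomb branch is homogeneous of a fixed degree $-n<0$ for the monopole grading (with $n$ the weight of the symplectic form under the contracting $\CC^\times$), one has $\{I_d,\CC[\overline{\cW}^\lambda_{\lambda'}]_e\}\subseteq \CC[\overline{\cW}^\lambda_{\lambda'}]_{d+e-n}$, and $I$ can fail to be Poisson only through brackets of pairs of elements of degree strictly between $0$ and $n$. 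The good condition on $\lambda'$ relative to $\lambda$ should be precisely the combinatorial guarantee --- read off from the monopole formula for $\overline{\cW}^\lambda_{\lambda'}$ --- that $\CC[\overline{\cW}^\lambda_{\lambda'}]$ has no nonzero homogeneous elements in degrees $1,\dots,n-1$; with that in hand $I$ is a Poisson ideal and $\iota$ is Poisson.

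Once \eqref{eq:d:64} is established as a closed Poisson embedding, the final assertion is formal. Its image is an irreducible closed Poisson subvariety of $\overline{\cW}^\lambda_\mu$, which has symplectic singularities by \cite{Weekes2}; Kaledin's theorem \cite{Kaledin} then identifies the symplectic leaves of $\overline{\cW}^\lambda_\mu$ with the smooth loci of its irreducible closed Poisson subvarieties, and applying this to $\overline{\cW}^{\lambda'}_\mu$ exhibits $(\overline{\cW}^{\lambda'}_\mu)^{reg}$ as a symplectic leaf.

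The hard part will be the second step: pinning down the weight $n$ and the explicit monopole grading of $\overline{\cW}^\lambda_{\lambda'}$, and checking that the good condition translates exactly into the vanishing of the graded pieces in degrees $1,\dots,n-1$. One should also take care, map by map in the diagram of Theorem~\ref{thm:big-tikz-diagram-defining-closed-embedding}, to confirm that the \cite{BFN1} operations used really are of the quantization-lifting (hence Poisson) type --- the cone-point inclusion being the sole exception, which, as Remark~\ref{rem:poisson-subvariety-does-not-quantize} records, does not lift to the quantization and whose Poisson property genuinely rests on the good hypothesis. As an independent cross-check, once one knows from Theorem~\ref{thm:infinite-type-embedding-intro} that \eqref{eq:d:64} sends FMOs to FMOs or to $0$ and that the FMOs generate the coordinate ring \cite{Weekes1}, the defining ideal of the image is generated by the FMOs that map to $0$, and one can re-verify Poisson-closedness by checking that the bracket of such an FMO with any FMO again lies in this ideal.
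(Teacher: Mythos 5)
Your proposal follows essentially the same route as the paper: the embedding is decomposed via the diagram of Theorem~\ref{thm:big-tikz-diagram-defining-closed-embedding}, the \cite{BFN1} operations (localization, Fourier transform, forgetting matter) are Poisson since they lift to the quantization, and the only genuine content is that the cone-point surjection $\cA(\lambda,\lambda')\twoheadrightarrow\kk$ is Poisson, which the paper proves exactly by your grading argument with $n=2$ (the bracket has degree $-2$ since $\deg\hbar=2$), the good condition $2\Delta(\pm\varpi_\bm)\geq 2$ being precisely the vanishing in degree $1$ read off from the monopole formula (Corollary~\ref{appendix:main result}); the symplectic-leaf conclusion then follows from \cite{Weekes2} and \cite{Kaledin} as you say. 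The step you flag as "the hard part" is thus already handled by the appendix computation of $2\Delta(\pm\varpi_\bm)=\bm\cdot(\bw-C\bv)+\bm\cdot(C\bm)$, so your plan is complete once combined with it.
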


We mention that the good condition (see Definition \ref{def:good-coweights}) is automatic in finite-type, and it holds in affine type for all levels greater than or equal to two (see Theorem \ref{thm: conicity finite affine}).

In our previous work \cite{MuthiahWeekes}, we constructed all symplectic leaves (and therefore all closed Poisson subvarieties) for finite-type slices, and indeed they are exactly the closed embeddings we study here. However, in infinite Kac-Moody type it is known that there are further closed Poisson subvarieties.

For an arbitrary Coulomb branch, Nakajima has made precise predictions for the enumeration of symplectic leaves based on symplectic duality \cite[\S 2]{NakQuestions}.  For quiver gauge theories -- and thus for Kac-Moody affine Grassmannian slices -- this enumeration should be dual to the enumeration of symplectic leaves for Nakajima quiver varieties. A complete enumeration of the latter is given by Bellamy and Schedler \cite[Theorem 1.9]{BS}. It would be very interesting to generalize Theorems \ref{thm:infinite-type-embedding-intro} and  \ref{thm:poisson-subvariety-intro} to obtain all of the ``dual'' symplectic leaves on the affine Grassmannian side.

\subsection{Previous work in affine type A}
\label{sec: affine type A}
In addition to finite type, there is another important case where the geometry of affine Grassmannian slices is well understood: affine type A. In this case Nakajima and Takayama \cite{NT} prove that the Kac-Moody affine Grassmannian slices are isomorphic to Cherkis bow varieties. In particular, they obtain an explicit geometric invariant theory description of these varieties and from that a full description of their symplectic leaves, transversal slices, and torus action. In later work \cite{NakSatake}, Nakajima has proved further geometric facts about bow varieties and has succeeded in generalizing the geometric Satake correspondence in this setting by geometrically constructing the irreducible representations of affine type A.

From Nakajima and Takayama's work, we see that our construction recovers many, but not all, of the symplectic leaves \cite[Theorem 7.26]{NT}. Furthermore, in describing symplectic leaves, they discover a distinction between level one and levels greater than one. In particular, the good hypothesis in Theorem \ref{thm:poisson-subvariety-intro} is sharp in this setting (cf.~Theorem \ref{thm: conicity finite affine}).

\subsection{Relation to physics}
Coulomb branches of $3d$ $\cN=4$ gauge theories originate in theoretical physics, see for example \cite{NakTowards} and references therein.  In addition to these physical origins, an important physically-motivated ingredient in the present work is the \emph{monopole formula} \cite{CHZ}, which encodes the Hilbert series of the coordinate ring of a  Coulomb branch with respect to a certain natural grading.  See Appendix \ref{sec:appendix} for an overview. In particular, the ``good condition'' from Theorem \ref{thm:poisson-subvariety-intro} corresponds to being a good theory in physics terminology, while the ``conicity condition'' from Theorem \ref{thm:infinite-type-embedding-intro} corresponds to a good or ugly theory.  

The study of symplectic leaves of Coulomb branches is also of physical significance, see for example the works of Hanany and collaborators \cite{BCGHSZ,GH,CH}.

\subsection{Outline of the paper} 
In \S 2, we prove Theorem \ref{thm:compatibility-of-FMOs-with-embeddings-of-slices}: closed embeddings of finite-type slices are compatible with FMOs. Aside from the formula for the FMOs, Coulomb branches do not enter the discussion in this section.  In \S 3, we recall the fundamentals of Coulomb branches and the definition of Kac-Moody affine Grassmannian slices as Coulomb branches of quiver gauge theories. In \S 4, we construct the embeddings of Kac-Moody affine Grassmannian slices and prove Thoerem \ref{thm:closed-embedding-of-KM-slices}. We also explain in \S 4.3 about compatibility with Poisson structure. Finally, in Appendix \ref{sec:appendix} we overview the monopole formula and its consequences for quiver gauge theories.

\subsection{Notation}

For a positive integer $\sv$, we write $[\sv] = \left\{1,\cdots,\sv\right\}$. We will work with schemes and ind-schemes over a field $\kk$. By a variety we mean a integral scheme of finite-type over $\kk$. Let $I$ be a finite set (later it will be the vertex set of a quiver), and let $\ZZ^I$ denote the set of $I$-tuples of integers. We write $\mathbf{0}  \in \ZZ^I$ for the $I$-tuple consisting of all zeros. Given two elements $\bm  = (\sm_i)_{i \in I}$ and $\bv = (\sv_i)_{i \in I}$ of $\ZZ^I$, we write $\bm \leq \bv$ to mean $\sm_i \leq \sv_i$ for all $i \in I$.  Given $\bv \in \ZZ^I$, we denote by $S_\bv = \prod_{i \in I} S_{\sv_i}$ the corresponding product of symmetric groups.
\subsection{Acknowledgements}

D.M.~was supported by JSPS KAKENHI Grant Number JP19K14495. A.W.~was supported by an NSERC Discovery Grant. We are grateful to the organizers of the conference ``Bundles and Conformal Blocks with a Twist'' which took place in June 2022 at the ICMS where some of this work was completed.

%%% Local Variables:
%%% mode: latex
%%% TeX-master: "main"
%%% End:

\section{Affine Grassmannian slices}
\label{sec:generalized-affine-grassmannian-slices}

Let $G$ be a split semisimple \emph{simply-laced} and simply-connected group with a fixed pinning.  We make the simply-connectedness assumption so that $G$ has all fundamental representations, but this is largely not necessary (see Remark \ref{rem:simply-connected-assumption} below). The pinning determines a pair $B^+$ and $B^-$ of opposite Borel subgroups. Let $U^+$ and $U^-$ be the unipotent radicals of $B^+$ and $B^-$, and let $T = B^+ \cap B^-$, a maximal torus. 

Let $I$ denote the vertices of the Dynkin diagram. Let $\{\simpleroot_i\}_{i\in I}$ denote the set of simple roots, and $\{\fundweight_i\}_{i \in I}$ denote the set of fundamental weights.  Let $\weightlattice$ denote the weight lattice of $T$, and $\weightlattice_{++}$ the set of dominant weights.  Let $\{ \simplecoroot_i\}_{i \in I}$ denote the set of simple coroots, and let $\corootlattice_+=\bigoplus_{i \in I} \ZZ_{\geq 0} \simplecoroot_i$ denote the positive coroot cone. Let $\coweightlattice$ denote the  coweight lattice of the torus $T$, and let $\coweightlattice_{++}$ denote the set of dominant coweights.   Finally, recall that the dominance order on $\coweightlattice$ is defined by $\lambda \geq \mu$ iff $\lambda - \mu \in \corootlattice_+$.

For each dominant weight $\Lambda \in \weightlattice_{++}$, let $W(-\Lambda)$ denote the Weyl module with lowest weight $-\Lambda$, and let $S(-\Lambda)$ denote the Schur module with lowest weight $-\Lambda$. We interpret both of these as \emph{left} representations of $G$, and we interpret their vector-space duals as \emph{right} representations. Write $V(-\Lambda)$ for a representation that is either $W(-\Lambda)$ or $S(-\Lambda)$, but where we have not specified which. The $-\Lambda$ weight space $V(-\Lambda)_{-\Lambda}$ is one dimensional, and we fix a generator $|v_{-\Lambda}\rangle$. Let $\langle v^*_{-\Lambda} |$ be the unique weight covector that pairs to $1$ with $|v_{-\Lambda}\rangle$. We use the ``bra'' and ``ket'' notation to emphasize on which side $G$ acts. 
The pinning on $G$ determines, for each $i \in I$, a fixed choice of $|v_{-s_i(\Lambda)} \rangle \in V(-\Lambda)_{-s_i(\Lambda)}$ and unique weight covector $\langle v^*_{-s_i(\Lambda)}|$ that pairs to $1$ with $|v_{-s_i(\Lambda)} \rangle$. 

\subsubsection{Colored divisors}
\label{sec:colored-divisors}
Let $\gamma \in \corootlattice_+$, which we may write as $\gamma = \sum_{i \in I} \sv_i \simplecoroot_i$ where $\sv_i = \langle \gamma, \fundweight_i \rangle$. Write $\AA^{(\gamma)}$ for the variety of $I$-colored divisors of degree $\gamma$ on $\AA^1$. Explicitly, points of $\AA^{(\gamma)}$ are given by $I$-tuples of monic polynomials $(L_i(z))_{i \in I}$ such that $L_i(z)$ has degree $\sv_i$ for each $i \in I$. The non-leading coefficients of $L_i(z)$ are regular functions on $\AA^{(\gamma)}$, and the coordinate ring $\kk[\AA^{(\gamma)}]$ is a polynomial ring in these non-leading coefficients.

\subsubsection{Quiver orientation}

The definition of fundamental monopole operators will require us to orient the edges of the Dynkin diagram.  We choose a quiver $(I,\arrowset)$ whose underlying unoriented graph is our Dynkin diagram. Here $I$ is the vertex set of the quiver, and $\arrowset$ is the arrow set. On the arrow set $\arrowset$, we have source and target maps $\source,\target: \arrowset \rightarrow I$.

\subsection{Affine Grassmannian slices}
We will follow the notation in \cite{MuthiahWeekes} closely. Let $G((z^{-1}))$ denote the Laurent series loop group of $G$ (in $z^{-1}$), and let $G[z]$ denote the subgroup of positive loops. We have an embedding $\coweightlattice \hookrightarrow G((z^{-1}))$ of the coweight lattice, denoted by $\mu \mapsto z^\mu$. 
For any affine algebraic group $H$ we may also consider the subgroup $H[[z^{-1}]] \subset H((z^{-1}))$ of negative loops. Denote by $H_1[[z^{-1}]]$ the kernel of the ``evaluation at $z=\infty$'' map to $H$, i.e. we have the short exact sequence of groups:
\begin{equation}
1 \longrightarrow H_1[[z^{-1}]] \longrightarrow H[[z^{-1}]] \longrightarrow H \longrightarrow 1
\end{equation}

For the rest of this section, fix coweights $\lambda \in \coweightlattice_{++}$ and $\mu \in \coweightlattice$ such that $\lambda \geq  \mu$. Define elements $\bv = (\sv_i)_{i \in I}$ and $\bw = (\sw_i)_{i \in I}$ of $\ZZ^I$ by
\begin{align}
  \label{eq:d:104}
  \sv_i = \langle \lambda - \mu, \fundweight_i \rangle \\
  \label{eq:d:104b}
  \sw_i = \langle \lambda,  \simpleroot_i \rangle
\end{align}
for all $i \in I$. Note that all $\sw_i, \sv_i \geq 0$, and that $\lambda - \mu = \sum_{i\in I} \sv_i \simplecoroot_i$.

Define a closed subscheme of $G((z^{-1}))$ by
\begin{equation}
  \label{eq:d:2}
  \cW_{\mu} = U^+_1[[z^{-1}]]T_1[[z^{-1}]] z^\mu U^-_1[[z^{-1}]],
\end{equation}
Consider also the closed sub-ind-scheme $\overline{\cX^{\lambda}} = \overline{G[z] z^\lambda G[z]} \subset G((z^{-1}))$, defined as the preimage of the Schubert variety $\overline{G[z] z^\lambda G[z] / G[z]} \subset G((z^{-1})) / G[z]$ in the thick affine Grassmannian under the natural map $G((z^{-1})) \rightarrow G((z^{-1})) / G[z]$.

The \emph{affine Grassmannian slice} is defined by
\begin{align}
\overline{\cW}^\lambda_\mu = \overline{\cX^\lambda} \cap \cW_{\mu}  
\end{align}
We will often write $x a z^\mu y \in \overline{\cW}^\lambda_\mu$ for elements where $x \in U^+_1[[z]]$, $a \in T_1[[z]]$, and $y \in U^-_1[[z^{-1}]]$.

Finally, we write
\begin{equation}
  \label{eq:d:6}
  \cA(\lambda,\mu) = \kk[ \overline{\cW}^\lambda_\mu]
\end{equation}
for the coordinate ring of $\overline{\cW}^\lambda_\mu$.

\begin{Remark}
When $\mu$ is dominant, the variety $\overline{\cW}_\mu^\lambda $ embeds into the affine Grassmannian of $G$, providing a transverse slice between spherical Schubert varieties \cite{KWWY}. This motivates the name ``affine Grassmannian slice''.  Note that when $\mu$  is not dominant $\overline{\cW}_\mu^\lambda$ is called a \emph{generalized} affine Grassmannian slice in \cite{BFN2}.  In this paper, we have elected to drop the word ``generalized''.
\end{Remark}

\subsubsection{A variant using matrix coefficients}

Fix $\lambda$ and $\mu$ as above. We define a closed sub-ind-scheme $\overline{\XX^\lambda} \subset G((z^{-1}))$ by
\begin{equation}
  \label{eq:d:3}
   \overline{\XX^\lambda} = \Big\{ g \in G((z^{-1})) \suchthat \val \langle u| g |v \rangle \geq - \langle \lambda, \Lambda \rangle \ \text{ for all } \ |v\rangle  \in V(-\Lambda), \  \langle u| \in V(-\Lambda)^* \Big\}
\end{equation}
where $\Lambda \in \weightlattice_{++}$ varies over all dominant weights and $V(-\Lambda)$ varies over $W(-\Lambda)$ and $S(-\Lambda)$.

It is easy to see that we have a closed embedding $\overline{\cX^\lambda} \hookrightarrow \overline{\XX^\lambda}$ that is a bijection on points, i.e. $\overline{\XX^\lambda}$ is a possibly non-reduced thickening of $\overline{\cX^\lambda}$. Define:
\begin{equation}
  \label{eq:d:4}
  \overline{\WW}^\lambda_\mu = \overline{\XX^\lambda} \cap \cW_{\mu}
\end{equation}
Then there is a closed embedding $\overline{\cW}^\lambda_\mu \hookrightarrow \overline{\WW}^\lambda_\mu$ that is a bijection on points. Because $\overline{\cW}^\lambda_\mu$ is known to be reduced \cite[\S 2(ii)]{BFN2}, we could alternatively define $\overline{\cW}^\lambda_\mu$ as the reduced scheme of $\overline{\WW}^\lambda_\mu$.

\subsection{Fundamental monopole operators}

\subsubsection{Birational coordinates}
Recall from \eqref{eq:d:104} that $\sv_i = \langle \lambda - \mu, \fundweight_i \rangle$. For any $g \in G((z^{-1}))$, we can consider $z^{\langle \lambda, \fundweight_i \rangle } \langle v_{-\fundweight_i}^* | g | v_{-\fundweight_i} \rangle$, which is a Laurent series in $z^{-1}$. If we further restrict that $g = x a z^\mu y  \in \overline{\cW}^\lambda_\mu$, then $z^{\langle \lambda, \fundweight_i \rangle }\langle v_{-\fundweight_i}^* | g | v_{-\fundweight_i} \rangle$ is a monic polynomial in $z$ of degree $\sv_i$, which we denote $Q_i(z)$ and think of as a regular function on $\overline{\cW}^\lambda_\mu$ (with values in monic polynomials of degree $\sv_i$).  Observe that $Q_i(z)$ depends only on the $a$-factor of $g = x a z^{\mu} y$. We obtain a map $\overline{\cW}^\lambda_\mu \rightarrow \AA^{(\lambda - \mu)}$ given sending a point of $\overline{\cW}^\lambda_\mu$ to the colored divisor $(Q_i(z))_{i \in I}$.

Similarly we define $P_i(z) = z^{\langle \lambda, \fundweight_i \rangle }\langle v_{-s_i(\fundweight_i)}^* | x a z^\mu y | v_{-\fundweight_i} \rangle$.  This is a (not necessarily monic) polynomial of degree less than or equal to $\sv_i - 1$, and we think of $P_i(z)$ as a regular function on $\overline{\cW^\lambda_\mu}$. Observe that $P_i(z)$ depends only on the $x$ and $a$-factors of $g = x a z^{\mu} y$. Finally, it is known that the coefficients of the $Q_i(z)$ and $P_i(z)$ for $i \in I$ form a system of birational coordinates on $\overline{\cW}_\mu^\lambda$ \cite{FKMM,BDF}, cf.~\S \ref{sec: zastava spaces} below.

\begin{Remark}
  \label{rem:simply-connected-assumption}
This is the first time we make use of the simply-connectedness hypothesis, in order to have all the fundamental representations. However, this is largely not needed. Let $G'$ be a group with universal cover equal to $G$. Then $U^+$ and $U^-$ are canonically identified with the positive and negative unipotent subgroups of $G'$.  If we assume that $(\text{char } \kk, |\pi_1(G')| ) = 1$, then $T_1[[z^{-1}]]$ is identified with the corresponding group for $G'$. In particular, because $Q_i(z)$ depends only on the $a$-factor and $P_i(z)$ depends only on the $a$ and $x$-factors of a point $x a z^\mu y \in \overline{\cW}^\lambda_\mu$, we see that the functions $Q_i(z)$ and $P_i(z)$ are well-defined in this case as well.

If $|\pi_1(G')|$ is divisible by $\text{char } \kk$, then we suggest to take the Coulomb branch characterization of $\overline{\cW}^\lambda_\mu$ (Theorem  \ref{thm:bfn-slices-are-Coulomb-branches}) as a definition,  because this definition behaves well over general base rings.
\end{Remark}

\subsubsection{GKLO embedding}
\newcommand{\tcA}{\widetilde{\cA}}

For each $i \in I$, we define variables $w_{i,1}, \ldots, w_{i,\sv_i}$ and $\su_{i,1}, \ldots, \su_{i,\sv_i}$ and form the ring: 
\begin{equation}
  \label{eq:d:5}
\tcA(\lambda-\mu) = \kk[ w_{i,r}, u_{i,r}^{\pm 1} ]_{i \in I, r \in [\sv_i]}
\end{equation}
We further define $ \tcA(\lambda - \mu)_\loc$ to be the ring obtained by inverting all polynomials of the form $w_{i,r} - w_{i,s}$ for $i \in I$ and $1\leq r \neq s \leq \sv_i$. The group $S_{\bv}$ acts on this ring by permuting variables. 

Recall from (\ref{eq:d:6}) that we denote the coordinate ring of $\overline{\cW}_\mu^\lambda$ by $\cA(\lambda,\mu)$. As a consequence of the Coulomb branch construction of the slice $\overline{\cW}_\mu^\lambda$, we have the following theorem:

\begin{Theorem}[\cite{BFN2}]
  \label{thm:GKLO-embedding-of-slices}
  There is a birational embedding
  \begin{equation}
    \label{eq:d:12}
    \cA(\lambda,\mu) \hookrightarrow \tcA(\lambda - \mu)_\loc^{S_{\bv}}
  \end{equation}
  of algebras under which
\begin{equation}
  \label{eq:d:13}
  Q_i(z) \mapsto  \prod_{r = 1}^{\sv_i} (z - w_{i,r}) 
\end{equation}
and 
\begin{equation}
  \label{eq:d:15}
P_i(z) \mapsto \sum_{r = 1}^{\sv_i} \left( \prod_{\substack{s = 1 \\ s \neq r}}^{\sv_i}  \frac{z - w_{i,s}}{w_{i,r} - w_{i,s}} \right) \prod_{a \in \arrowset  :\, \source(a) = i} \prod_{t = 1}^{\sv_{\target(a)}} ( w_{\target(a),t} - w_{i,r}) \su_{i,r}
\end{equation}
for all $i \in I$.
\end{Theorem}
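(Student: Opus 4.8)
The plan is to realize the embedding \eqref{eq:d:12} directly from the Coulomb branch construction, where it is essentially the image of the GKLO/localization map, and then read off the images of $Q_i(z)$ and $P_i(z)$ from their definitions as matrix coefficients. Concretely, the theorem is a consequence of \cite{BFN2}: the coordinate ring $\cA(\lambda,\mu)$ is by construction a subalgebra of a localization of a polynomial ring built from the equivariant cohomology of a point, i.e. of $\tcA(\lambda-\mu)^{S_\bv}_\loc$, with the $w_{i,r}$ being the equivariant parameters of the flavour/gauge torus (the Chern roots of the tautological bundles at vertex $i$) and the $\su_{i,r}^{\pm1}$ being the classes of the relevant monopole operators. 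So the first step is simply to invoke that construction and identify the target ring with $\tcA(\lambda-\mu)^{S_\bv}_\loc$.

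The second step is to compute where $Q_i(z)$ goes. By the birational coordinate discussion above, $Q_i(z)$ is the monic polynomial $z^{\langle\lambda,\fundweight_i\rangle}\langle v^*_{-\fundweight_i}|\,xaz^\mu y\,|v_{-\fundweight_i}\rangle$, and it depends only on the $a$-factor $a\in T_1[[z^{-1}]]$. Under the identification of $T_1[[z^{-1}]]$-coordinates with the $w_{i,r}$, the $i$-th component of $a$ contributes exactly the characteristic-polynomial-type expression $\prod_{r=1}^{\sv_i}(z-w_{i,r})$; this matches \eqref{eq:d:13}. For \eqref{eq:d:15} one computes the matrix coefficient $z^{\langle\lambda,\fundweight_i\rangle}\langle v^*_{-s_i(\fundweight_i)}|\,xaz^\mu y\,|v_{-\fundweight_i}\rangle$, which depends on the $x$- and $a$-factors. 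The $x$-factor at vertex $i$ contributes the monopole variables $\su_{i,r}$, the $a$-factor at vertex $i$ contributes the Lagrange-interpolation weights $\prod_{s\neq r}(z-w_{i,s})/(w_{i,r}-w_{i,s})$ (so that the polynomial has the prescribed values at $z=w_{i,r}$), and the remaining factor $\prod_{a:\source(a)=i}\prod_{t=1}^{\sv_{\target(a)}}(w_{\target(a),t}-w_{i,r})$ is precisely the contribution of the hypermultiplets/matter along the arrows $a$ out of $i$ to the relevant monopole class — this is the standard ``dressing factor'' in the abelianization/localization formula for Coulomb branches of quiver gauge theories.

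In practice I would not re-derive the Coulomb branch machinery; instead I would cite \cite{BFN2} (and the explicit formulas for quiver gauge theories therein, together with \cite{Weekes1}) for the existence of the birational embedding and the abelianized formulas, and then only verify compatibility with the matrix-coefficient definitions of $Q_i$ and $P_i$ given in the paragraph on birational coordinates. The key check is that the generators of $\cA(\lambda,\mu)$ coming from the Coulomb branch side agree with the intrinsic functions $Q_i(z)$, $P_i(z)$ defined via $G((z^{-1}))$; this uses the known comparison of the two descriptions of $\overline{\cW}^\lambda_\mu$ (loop-group vs.\ Coulomb branch) from \cite{BFN2}. The main obstacle is bookkeeping: matching the sign conventions, the choice of quiver orientation $(I,\arrowset)$, and the normalizations of the covectors $\langle v^*_{-s_i(\fundweight_i)}|$ coming from the pinning, so that the product over arrows $a$ with $\source(a)=i$ (rather than target) comes out on the correct side and with the correct argument $w_{\target(a),t}-w_{i,r}$. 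Everything else is a routine, if lengthy, computation with matrix coefficients of fundamental representations and the elementary identity that $\sum_r \big(\prod_{s\neq r}\frac{z-w_{i,s}}{w_{i,r}-w_{i,s}}\big)(\cdots)_r$ is the unique polynomial of degree $\le \sv_i-1$ interpolating the bracketed values.
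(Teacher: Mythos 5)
Your proposal is correct and matches the paper's treatment: the paper likewise does not re-derive the map but attributes it to \cite{BFN2} (the $\hbar=0$ limit of Theorem B.15 there, adjusted by the sign involution recorded in Remark \ref{remark: GKLO vs BFN}), identifies it in \S \ref{sec:beyond-finite-gklo-embedding} as the composition of the localization map $\iota_*^{-1}$ with the ``forgetting matter'' map $\bz^*$ into $\cA(\bT,0)_\loc$, and notes that since the $Q_i(z),P_i(z)$ are birational coordinates the formulas \eqref{eq:d:13} and \eqref{eq:d:15} determine the embedding uniquely. Your emphasis on sign/orientation bookkeeping is exactly the point the paper flags in that remark, so no gap.
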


\begin{Remark}
\label{remark: GKLO vs BFN}
The above embedding differs from that in \cite{BFN2} by a sign.  More precisely, define an involution on $\tcA(\lambda-\mu)$ by 
$$
w_{i,r} \mapsto w_{i,r}, \qquad \su_{i,r} \mapsto (-1)^{\sum_{a \in E : \source(a) = i} \sv_{\target(a)}} \su_{i,r}
$$
Composing this involution with the map \eqref{eq:d:12}, we obtain precisely the $\hbar=0$ limit of \cite[Theorem B.15]{BFN2}, cf.~\cite[Lemma B.27]{BFN2}. We make this change of signs to simplify the statement of certain other results, such as Theorem \ref{thm:compatibility-of-FMOs-with-embeddings-of-slices} below. The above involution may also be interpreted as the action of a certain element of the torus $T \subset G$, acting via the adjoint action on $\overline{\cW}_\mu^\lambda$.
\end{Remark}

The map from the theorem is a slight variation of one constructed for (quantized) zastava spaces by Gerasimov, Kharchev, Lebedev, and Oblezin \cite{GKLO}. For that reason we will call this the \emph{GKLO embedding} of $\cA(\lambda,\mu)$.  This map was also studied in \cite{KWWY},  in the case where $\mu$ is dominant. Note that because the $P_i(z)$ and $Q_i(z)$ are birational coordinates on $ \overline{\cW}^\lambda_\mu$, the map is uniquely determined by formulas \eqref{eq:d:13} and \eqref{eq:d:15}.  

Consider the subalgebra of $\cA(\lambda,\mu)$ generated by the coefficients of the $Q_i(z)$. Under the GKLO embedding, this subalgebra is identified with the subalgebra $\kk[w_{i,r}]_{i\in I, r \in [\sv_i]}^{S_\bv} \subset \tcA(\lambda-\mu)$ of symmetric polynomials.  For each $i \in I$, the coefficients of $Q_i(z)$ are identified (up to a sign) with the elementary symmetric functions in the variables $w_{i,r}$ for $r \in [\sv_i]$, by \eqref{eq:d:13}.

\subsubsection{Fundamental monopole operators}

For each tuple $\bm = (\sm_i)_{i\in I} \in \ZZ^I$ with $\mathbf{0} \leq \bm \leq \bv$, we write 
\begin{equation}
\Lambda_\bm = \Lambda^\bv_\bm =  \bigotimes_{i \in I} \kk[w_{i,r} \ : \ 1 \leq r \leq \sv_i]^{S_{\sm_i} \times S_{\sv_i - \sm_i}}
\end{equation}

for the corresponding ring of partially symmetric polynomials. 

Below we will consider sums over tuples $\Gamma = (\Gamma_i)_{i \in I}$ such that $\Gamma_i \subseteq [\sv_i]$ and $\# \Gamma_i = m_i$. For each such tuple $\Gamma$, let $\sigma = (\sigma_i)_{i\in I} \in S_{\bv}$ be an element such that $\sigma_i([\sm_i]) = \Gamma_i$. 
Given $f \in \Lambda^\bv_\bm$, define $f\vert_\Gamma = \sigma(f)$, which is an element of $\bigotimes_{i \in I} \kk[w_{i,r} \ : \ 1 \leq r \leq \sv_i]$. Because $f$ is partially symmetric, $f\vert_\Gamma$ does not depend on which $\sigma$ was chosen. Finally, we define $\su_\Gamma = \prod_{i \in I} \prod_{r \in \Gamma_i} \su_{i,r}$.

\begin{Definition}
\label{def: FMOs}
Let $\bm = (\sm_i)_{i\in I} \in \ZZ^I$ with $\mathbf{0} \leq \bm \leq \bv$, and let $f \in \Lambda^\bv_\bm$.  We define the \emph{(positive) dressed fundamental monopole operator} $\M^{+}_\bm(f) \in \tcA(\lambda - \mu)_\loc$ by the formula:
\begin{equation}
\label{eq:positive-FMOs-images-under-GKLO}
\M^{+}_\bm(f) = \sum_{\substack{\Gamma = (\Gamma_i)_{i \in I} \\ \Gamma_i \subseteq [\sv_i],  \# \Gamma_i = \sm_i}} f\vert_\Gamma \cdot  \frac{\prod_{ a \in \arrowset} \prod_{r\in \Gamma_{\source(a)}, s\notin \Gamma_{\target(a)}} (w_{\target(a),s} - w_{\source(a),r})}{\prod_{i \in I} \prod_{r \in \Gamma_i, s\notin \Gamma_i}(w_{i,r} - w_{i,s})}  \su_{\Gamma} 
\end{equation}
We define the \emph{(negative) dressed fundamental monopole operator} $\M^{-}_\bm(f) \in \tcA(\lambda - \mu)_\loc$ by the formula: 
\begin{equation}
\label{eq:negative-FMOs-images-under-GKLO}
\M^{-}_\bm(f) = (-1)^{\text{sign}}  \sum_{\substack{\Gamma = (\Gamma_i)_{i \in I} \\ \Gamma_i \subseteq [\sv_i],  \# \Gamma_i = \sm_i}} f\vert_\Gamma \cdot
\frac{\prod_{j \in I} \prod_{t \in \Gamma_j} w_{j,t}^{\sw_{j}} \cdot \prod_{ a \in \arrowset} \prod_{r\in \Gamma_{\target(a)}, s\notin \Gamma_{\source(a)}} (w_{\target(a),r} - w_{\source(a),s}  )}{\prod_{i \in I} \prod_{r \in \Gamma_i, s\notin \Gamma_i}(w_{i,s} - w_{i,r})}  \su_{\Gamma}^{-1} 
\end{equation}
where: 
\begin{align}
  \label{eq:d:48}
  \text{sign} = \sum_{i \in I} \sm_i \sv_i + \sum_{a \in E} \sm_{\source(a)} \sv_{\target(a)}
\end{align}

\end{Definition}

\begin{Remark}
When $\bm = \mathbf{0}$, meaning that $\sm_i = 0$ for all $i \in I$, we interpret 
$$
\M^\pm_{\mathbf{0}}(f) = f \ \in \ \Lambda_\mathbf{0} = \kk[w_{i,r}]_{i \in I, r \in [\sv_i]}^{S_\bv}
$$
Furthermore, for general $\bm$ we have $\Lambda_{\mathbf{0}} \subseteq \Lambda_\bm$, and a linearity property over this subring $\Lambda_{\mathbf{0}}$:
$$
M^\pm_\bm( f g) = f \cdot M^\pm_\bm(g), \qquad \text{for all } f \in \Lambda_{\mathbf{0}}, \ g \in \Lambda_\bm
$$
\end{Remark}

For brevity, we will abbreviate ``dressed fundamental monopole operator'' to \emph{FMO}. Observe that the FMOs lie in $\tcA(\lambda-\mu)_\loc^{S_\bv}$, so via Theorem \ref{thm:GKLO-embedding-of-slices} the FMOs are rational functions on $\overline{\cW}^\lambda_\mu$. The following non-trivial theorem is a consequence of the Coulomb branch construction of $\overline{\cW}^\lambda_\mu$.

\begin{Theorem}[\cite{BFN2}]
  \label{thm: FMOs are regular}
The FMOs are regular functions on $\overline{\cW}^\lambda_\mu$ %\dinakar{I've been writing ``dressed FMO''. I think FMO is enough now that FMO means.} 
\end{Theorem}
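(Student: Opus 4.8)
The plan is to deduce Theorem~\ref{thm: FMOs are regular} from the Coulomb branch realization of $\overline{\cW}^\lambda_\mu$, by identifying the FMOs with honest classes in the relevant equivariant Borel–Moore homology and then using the fact that the GKLO-type embedding of Theorem~\ref{thm:GKLO-embedding-of-slices} is precisely the localization map. Concretely, let $\cM_C$ denote the Coulomb branch of the quiver gauge theory $(G_{\mathsf{gauge}}, \mathbf{N})$ attached to $(\bv,\bw)$, so that by the Braverman–Finkelberg–Nakajima construction (used in the proof of the identification $\cA(\lambda,\mu) = \kk[\cM_C]$, Theorem~\ref{thm:bfn-slices-are-Coulomb-branches}) we have $\kk[\cM_C] = H^{G_{\mathcal O}}_*(\mathcal R)$ as a convolution algebra, with a canonical basis-type collection of classes indexed by coweights of the gauge group. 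The first step is to recall that for a \emph{minuscule} (fundamental) coweight $\bm$ of the gauge torus, the corresponding fundamental class $[\mathcal R_{\le \bm}]$ (and its ``dressed'' variants obtained by capping with equivariant cohomology classes $f \in H^*_{G_{\mathcal O}}$, i.e.\ partially symmetric polynomials) is a well-defined element of $H^{G_{\mathcal O}}_*(\mathcal R)$, hence a regular function on $\cM_C$.

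The second step is to compute the image of these classes under the localization isomorphism $H^{G_{\mathcal O}}_*(\mathcal R) \otimes \mathrm{Frac} \xrightarrow{\ \sim\ } H^{T_{\mathcal O}}_*(\mathrm{Gr}_T) \otimes \mathrm{Frac}$, i.e.\ under the embedding of Theorem~\ref{thm:GKLO-embedding-of-slices} after tensoring with fractions. This is a standard fixed-point/pushforward computation: one restricts $[\mathcal R_{\le \bm}]$ to the $T$-fixed loci in $\mathrm{Gr}$ indexed by $\sigma(\bm)$ for $\sigma \in S_\bv / (S_{\sm}\times S_{\bv-\sm})$, and the Euler-class contributions of the normal directions — coming from the adjoint of the gauge group and from the matter representation $\mathbf{N}$ — produce exactly the ratio of products
$$
\frac{\prod_{a \in \arrowset}\prod_{r \in \Gamma_{\source(a)},\, s \notin \Gamma_{\target(a)}} (w_{\target(a),s} - w_{\source(a),r})}{\prod_{i\in I}\prod_{r\in\Gamma_i,\, s\notin\Gamma_i}(w_{i,r}-w_{i,s})}
$$
appearing in \eqref{eq:positive-FMOs-images-under-GKLO}, with the factor $\su_\Gamma$ recording the shift by $z^{\sigma(\bm)}$ and $f\vert_\Gamma$ the dressing; the negative FMOs \eqref{eq:negative-FMOs-images-under-GKLO} arise identically from the antidominant coweight $-\bm$, where the extra factor $\prod_{j,t} w_{j,t}^{\sw_j}$ and the inverse $\su_\Gamma^{-1}$ come from the $\mathbf{N}_{\mathcal O}/\mathbf{N}_{\mathcal O}\cap z^{-\bm}\mathbf{N}_{\mathcal O} z^{\bm}$ weights and the sign in \eqref{eq:d:48} from orientation/Euler-class conventions (matching the sign normalization fixed in Remark~\ref{remark: GKLO vs BFN}). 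Thus $\M^\pm_\bm(f)$, as defined by those explicit rational formulas, is literally the localization-image of a genuine homology class, hence lies in the subring $\kk[\cM_C] = \cA(\lambda,\mu)$ of $\tcA(\lambda-\mu)_{\loc}^{S_\bv}$.

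The third step is bookkeeping: check that the minuscule-monopole classes and their dressings are closed under the $\Lambda_{\mathbf 0}$-linearity asserted in the Remark (capping with $S_\bv$-symmetric polynomials, which are pulled back from $\kk[\cM_C]$ itself), and verify that the indexing $\mathbf 0 \le \bm \le \bv$ exactly matches the set of cocharacters of the gauge torus $\prod_i \mathbf{GL}_{\sv_i}$ that are conjugate to a $0/1$ vector, so that the definition in \eqref{def: FMOs} covers precisely these classes. The main obstacle is the second step: carrying out the fixed-point localization on $\mathrm{Gr}_{G_{\mathsf{gauge}}}$ with the correct Euler-class normalizations so that the output agrees \emph{on the nose} with \eqref{eq:positive-FMOs-images-under-GKLO}--\eqref{eq:negative-FMOs-images-under-GKLO}, including the sign \eqref{eq:d:48} and the interplay with the sign twist of Remark~\ref{remark: GKLO vs BFN}; this is essentially the content of \cite[Appendix B]{BFN2} adapted to dressed fundamental classes, and the proof amounts to quoting and reconciling those computations. (One could alternatively avoid localization entirely by observing that the monopole-formula grading on $\kk[\cM_C]$ assigns the class attached to $\bm$ a pure degree, that the span of such classes is $S_\bv$-stable and closed under the convolution product in the appropriate filtration, and that it surjects onto the associated graded — but the cleanest route remains direct identification with BFN's classes, so that is the argument I would write.)
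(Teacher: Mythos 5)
Your proposal is correct and follows essentially the same route as the paper (and the source \cite{BFN2} it cites): one identifies $\M^\pm_\bm(f)$ with the dressed minuscule monopole operators $\M^{\bG,\bN}_{\pm\varpi_\bm}(f)$, which are honest classes $f[\cR^{\pm\varpi_\bm}_{\bG,\bN}]$ in $\cA(\bG,\bN)=\kk[\overline{\cW}^\lambda_\mu]$, and then matches their images under the GKLO embedding with the defining rational formulas, exactly as in \eqref{eq:d:107}--\eqref{eq:d:108} together with \eqref{eq:d:36}. The only presentational caveat is that in the paper's formalism the matter factors (the arrow products and the $w_{j,t}^{\sw_j}$) enter through the forgetting-matter map $\bz^\ast$ of \eqref{eq:d:67} rather than through the fixed-point localization itself, the GKLO embedding being the composite \eqref{eq:d:105}; this is a bookkeeping difference, not a gap.
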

Additionally we have the following theorem of Weekes.

\begin{Theorem}[\cite{Weekes1}]
\label{thm: FMOs generate for slice}
  The FMOs generate the ring of regular functions on $\overline{\cW}^\lambda_\mu$.
\end{Theorem}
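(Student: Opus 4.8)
The plan is to deduce Theorem \ref{thm: FMOs generate for slice} from the Coulomb branch description of $\overline{\cW}^\lambda_\mu$, using that the FMOs are precisely (the images under an isomorphism of) the natural generators of a Coulomb branch algebra. Recall that $\cA(\lambda,\mu)$ is the Coulomb branch algebra of the relevant quiver gauge theory $(G_{\mathbf{w}},\mathbf{N})$, which by construction is a subalgebra of $\kk(\mathfrak{t})[\pi_1(T)]$-type localized ring, spanned over $\kk[\mathfrak{t}/W]= \kk[w_{i,r}]^{S_\bv}$ by classes $[\mathcal{R}_{G_{\mathbf{w}},\mathbf{N}}]_{\le \chi}$ supported on (closures of) orbits indexed by cocharacters $\chi$. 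So first I would recall from \cite{BFN1} the presentation of the Coulomb branch as a free module over the equivariant cohomology of a point, with basis given by fundamental classes of affine Grassmannian Schubert-type cells, and identify $\Lambda_{\mathbf 0}=\kk[w_{i,r}]^{S_\bv}$ with that cohomology ring (this is exactly the subalgebra generated by the coefficients of the $Q_i(z)$, as noted after Theorem \ref{thm:GKLO-embedding-of-slices}).

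Second, I would match the monomial generators. The key structural input is that for a quiver gauge theory the Coulomb branch is generated as an algebra over $\kk[w_{i,r}]^{S_\bv}$ by the classes corresponding to the \emph{minuscule} cocharacters — i.e. the $\pm$ fundamental coweights at each vertex — together with their ``dressings'' by symmetric functions on the stabilizer. Concretely: for the cocharacter $\chi$ that is $\pm$ the $i$-th coweight with multiplicity one, the stabilizer is $S_{\sm_i}\times S_{\sv_i-\sm_i}$-type, the dressed classes are exactly $\M^{\pm}_{\bm}(f)$ for $f\in\Lambda^{\bv}_{\bm}$, and the formula for the Coulomb branch product/multiplication map produces precisely the rational expressions \eqref{eq:positive-FMOs-images-under-GKLO}–\eqref{eq:negative-FMOs-images-under-GKLO}. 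Thus I would cite/recall the explicit Coulomb-branch formula from \cite{BFN1} (the abelianization/flag-variety integration formula) and observe it coincides with Definition \ref{def: FMOs} under the GKLO identification of Theorem \ref{thm:GKLO-embedding-of-slices}. This identifies the FMOs with a spanning set of $\cA(\lambda,\mu)$ over $\Lambda_{\mathbf 0}$, modulo one point.

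Third — and this is where the real content lies — I would invoke the generation statement for Coulomb branches of quiver gauge theories. By \cite{BFN1} (via the Hilbert-scheme/flag-variety resolution, or the explicit ``minuscule monopoles generate'' argument) any monopole class $[\mathcal{R}]_{\le\chi}$ for arbitrary $\chi$ can be written as a polynomial in the minuscule ones with coefficients in $\kk[w_{i,r}]^{S_\bv}$; unwinding, every such polynomial lies in the subalgebra generated by the $\M^{\pm}_{\bm}(f)$. Since $\kk[w_{i,r}]^{S_\bv}=\Lambda_{\mathbf 0}$ is itself generated by the FMOs $\M^{\pm}_{\mathbf 0}(f)=f$, we conclude the FMOs generate all of $\cA(\lambda,\mu)=\kk[\overline{\cW}^\lambda_\mu]$. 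The main obstacle — indeed the genuine theorem of \cite{Weekes1} being quoted here — is precisely this third step: showing the non-minuscule monopole operators are \emph{polynomial} expressions (not merely rational) in the fundamental ones. This requires a careful induction on the cocharacter $\chi$ in the dominance order, using the Coulomb branch multiplication formula to express $[\mathcal{R}]_{\le\chi}$ as a product of lower classes plus a correction supported strictly lower, and controlling denominators (the $w_{i,r}-w_{i,s}$ factors) — which is exactly the kind of delicate denominator-cancellation argument that also appears in the zastava-space manipulations of \S\ref{sec: zastava spaces}. Thus for the purposes of this paper I would simply cite \cite{Weekes1} for this step and record the identification of FMOs with the natural Coulomb-branch generators as the bridge between that result and the present setting.
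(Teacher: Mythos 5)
Your proposal matches the paper's treatment: the paper does not prove this statement but quotes it from \cite{Weekes1}, after identifying the FMOs $\M_\bm^\pm(f)$ with dressed minuscule monopole operators $\M_{\pm\varpi_\bm}^{\bG,\bN}(f)$ exactly as you describe (see \S\ref{sec: FMOs}), so citing \cite{Weekes1} for the generation-by-minuscule-classes step is precisely what is done. One small caution: your third step momentarily attributes the ``minuscule monopole operators generate'' statement to \cite{BFN1}, which only provides the convolution/localization formalism; as you yourself then note, that generation result is the actual content of \cite{Weekes1} and is not in \cite{BFN1}.
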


The above results come from the mathematical theory of Coulomb branches, which we review in \S\ref{section: KM slices via Coulomb}, and in particular rely on Braverman, Finkelberg and Nakajima's groundbreaking Theorem \ref{thm:bfn-slices-are-Coulomb-branches}. The FMOs arise very naturally in this setting, see \S \ref{sec: FMOs}.

\begin{Remark}
\label{remark: FMOs and sign}
Up to a sign, the elements $\M^\pm_\bm(f)$ do not depend on the choice of orientation of the Dynkin diagram. This is explained most naturally in the context of Coulomb branches, see \S \ref{sec: FMOs} and equation \eqref{eq: FT for MMOs} below.
\end{Remark}
%%%%
\subsection{Compatibility of FMOs with inclusions of affine Grassmannian slices}
\label{sec:Compatibility of FMOs with inclusions of generalized affine Grassmannian slices}

Fix another $\lambda' \in \coweightlattice_{++}$ such that $\lambda \geq \lambda' \geq \mu$. Then there is a closed embedding $\overline{\cX^{\lambda'}} \hookrightarrow \overline{\cX^{\lambda}}$ of (preimages of) spherical Schubert varieties; this is easy to see for the spaces $\overline{\XX^\lambda}$ from \eqref{eq:d:3}. Intersecting with the space $\cW_\mu $ from \eqref{eq:d:2}, there is thus a closed embedding:
\begin{equation}
  \label{eq:d:10}
 \overline{\cW}^{\lambda'}_\mu \hookrightarrow  \overline{\cW}^\lambda_\mu
\end{equation}
Define the tuple $\bv' = (\sv'_i)_{i \in I}$ by $\sv'_i = \langle \lambda - {\lambda'}, \Lambda_i \rangle$ for each $i \in I$.

Let $\bm = (\sm_i)_{i \in I} \in \ZZ^I$ with $\mathbf{0} \leq \bm \leq \bv$. Suppose further that $\bm \leq \bv'$. Then we can define a map
\begin{equation}
  \label{eq:d:103}
 \Lambda_\bm^\bv \rightarrow  \Lambda_\bm^{\bv'} : f \mapsto \tilde{f}
\end{equation}
by setting the variables $w_{i,r}$ with $r > \sv'_i$ equal to zero for all $i \in I$. We can now state the main theorem of this section.

\begin{Theorem}
  \label{thm:compatibility-of-FMOs-with-embeddings-of-slices}

 Let $\bm = (\sm_i)_{i\in I} \in \ZZ^I$ with $\mathbf{0} \leq \bm \leq \bv $, and let $f \in \Lambda^\bv_\bm$. Under the restriction map of functions $\kk[\overline{\cW}^\lambda_\mu] \rightarrow \kk[\overline{\cW}^{\lambda'}_\mu]$ corresponding to the closed embedding \eqref{eq:d:10}, we have
 \begin{equation}
   \label{eq:d:11}
   \M_{\bm}^{\pm}(f) \mapsto
   \begin{cases}
    \M_\bm^{\pm}\big(\tilde{f}\big) & \text{if } \bm \leq \bv'  \\ 0 & \text{otherwise}  
   \end{cases}
 \end{equation}

\end{Theorem}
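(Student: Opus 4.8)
The plan is to work with the birational coordinates $P_i(z), Q_i(z)$ and the ``adding defect'' structure of zastava spaces, as the paper itself hints, rather than attempting to restrict the rational expressions \eqref{eq:positive-FMOs-images-under-GKLO}--\eqref{eq:negative-FMOs-images-under-GKLO} directly (their denominators $\prod(w_{i,r} - w_{i,s})$ vanish on the locus we are restricting to). First I would identify, explicitly and scheme-theoretically, the closed embedding \eqref{eq:d:10} in terms of the GKLO-type data: the image of $\overline{\cW}^{\lambda'}_\mu$ inside $\overline{\cW}^\lambda_\mu$ should be cut out by the condition that the colored divisor $(Q_i(z))_{i\in I}$ is divisible by the ``reference'' divisor of degree $\lambda - \lambda'$ supported at $z=0$, i.e. $Q_i(z) = z^{\sv_i - \sv'_i} Q_i^{\lambda'}(z)$, together with the corresponding vanishing of the appropriate coefficients of $P_i(z)$. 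Concretely, in the $w,\su$-coordinates this says that $\sv_i - \sv'_i$ of the $w_{i,r}$ are set to $0$ — which is exactly what the map $f \mapsto \tilde f$ of \eqref{eq:d:103} records. I would make this precise by comparing the defining equations \eqref{eq:d:3} for $\overline{\XX^{\lambda'}} \hookrightarrow \overline{\XX^\lambda}$ (valuation conditions on matrix coefficients $\langle u | g | v\rangle$) after intersecting with $\cW_\mu$, and tracking what this imposes on $Q_i, P_i$.

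The key technical move is to pass to zastava spaces. Recall that the coefficients of $P_i, Q_i$ are birational coordinates, and the zastava space $\cZ$ (with its factorization/``adding defect'' maps) is precisely the space on which these coordinates are global and the denominators of the FMO formulas are units on a suitable open locus. So I would: (i) realize both $\overline{\cW}^{\lambda}_\mu$ and $\overline{\cW}^{\lambda'}_\mu$ as (closures of) open subsets of zastava spaces $\zastavap^{\lambda-\mu}$, $\zastavap^{\lambda'-\mu}$, with the embedding \eqref{eq:d:10} corresponding to the ``adding defect at $z=0$'' map that multiplies the colored divisor by the fixed divisor of color-degree $\lambda-\lambda'$; (ii) on the generic locus where all $w_{i,r}$ are distinct and nonzero, verify the transformation rule \eqref{eq:d:11} for $\M^\pm_\bm(f)$ directly from the explicit formulas, by specializing $w_{i,r} \mapsto 0$ for $r > \sv'_i$. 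Here the sum over $\Gamma$ splits: terms with $\Gamma_i \not\subseteq [\sv'_i]$ for some $i$ either vanish or must be paired/cancelled, and the surviving terms are exactly those indexed by $\Gamma \subseteq [\bv']$, which reassemble into $\M^\pm_\bm(\tilde f)$ — provided $\bm \leq \bv'$, and giving $0$ when some $\sm_i > \sv'_i$ since then no admissible $\Gamma_i$ survives. The sign bookkeeping in the negative case (the factors $\prod w_{j,t}^{\sw_j}$ and the exponent \eqref{eq:d:48}) must be checked to be consistent under the specialization; I expect it to work because $\sw_i$ is unchanged and the combinatorial exponents depend only on $\bm$ and the quiver.

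Finally, since the $P_i, Q_i$ are birational coordinates and both FMOs and the restriction map are already known to be \emph{regular} (Theorems \ref{thm: FMOs are regular} and the closedness of \eqref{eq:d:10}), an identity of rational functions valid on a dense open set is an identity of regular functions; this upgrades the generic computation to the stated equality on all of $\overline{\cW}^{\lambda'}_\mu$. The main obstacle, I expect, is step (i): carefully matching the loop-group/matrix-coefficient description of the closed embedding \eqref{eq:d:10} with the ``adding defect'' map on zastava in a way that is faithful on \emph{scheme} structure (not just on points), and confirming that under this identification setting $w_{i,r}=0$ for $r>\sv'_i$ is literally the right operation — equivalently, that the pullback of the $P_i^{\lambda}, Q_i^{\lambda}$ coordinates along \eqref{eq:d:10} is given by the substitution underlying \eqref{eq:d:103}. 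Once that dictionary is in place, the FMO identity \eqref{eq:d:11} is a finite, if somewhat intricate, manipulation of the explicit sums over $\Gamma$.
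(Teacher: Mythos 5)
Your overall direction (pass to zastava spaces, use the adding-defect structure, exploit that $P_i,Q_i$ are birational coordinates) matches the paper's, but your central computational step has a genuine gap. You propose to fix the added divisor at $z=0$ from the start and then ``verify the transformation rule \dots by specializing $w_{i,r}\mapsto 0$ for $r>\sv_i'$'' term by term in \eqref{eq:positive-FMOs-images-under-GKLO}. That specialization locus is exactly the vanishing locus of the denominators: whenever $\sv_i-\sv_i'\geq 2$, any term whose $\Gamma_i$ meets $\{r>\sv_i'\}$ has a factor $(w_{i,r}-w_{i,s})$ with both $r,s>\sv_i'$ in its denominator, so individual terms have poles on the locus you are restricting to, and ``terms \dots either vanish or must be paired/cancelled'' is precisely the difficulty, not an argument. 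Moreover, your dictionary for the embedding only records the substitution on the $w$'s (the map \eqref{eq:d:103}); it says nothing about how the rational coordinates $\su_{i,r}$ restrict. In the paper's proof the vanishing of the bad terms comes from $\su_{i,r}\mapsto 0$ for $r>\sv_i'$, and the reassembly of the surviving terms into $\M^\pm_\bm(\tilde f)$ requires the precise rescaling of $\su_{i,r}$ for $r\leq\sv_i'$ in \eqref{eq:d:23}, which cancels the crossing factors as in \eqref{eq:d:32}. Neither of these can be extracted from the $w$-substitution alone. The missing idea is to keep the added defect divisor \emph{generic}: work with the family $\zastavap^{\gamma'}\times\AA^{(\gamma'')}\to\zastavap^{\gamma}$ of \eqref{eq:d:14}, where $Q_i\mapsto \overline{Q_i}L_i$ and $P_i\mapsto \overline{P_i}L_i$ force the transformation law of the $\su$'s (Theorem \ref{thm:GKLO-commutative-square-for-adding-defect}), all denominators stay invertible, the FMO restriction can be computed honestly (Theorem \ref{thm:compatibility-of-FMOs-with-adding-defect-map-of-zastava}), and only the resulting \emph{regular} expression is then specialized at the divisor supported at $0$.

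A second, smaller gap: your plan treats $\M^+_\bm$ and $\M^-_\bm$ uniformly through $\zastavap^{\lambda-\mu}$, but the negative FMOs are not pulled back from $\zastavap^{\lambda-\mu}$ (they involve $\su_{i,r}^{-1}$ and live on $\zastavam^{\lambda-\mu}$), so the positive-case computation does not apply verbatim and ``sign bookkeeping'' is not the only issue. The paper resolves this with the Chevalley anti-involution $\iota$ of \S\ref{sec: Chevalley}, which interchanges $\M^+_\bm(f)$ and $\M^-_\bm(f)$ and commutes with the embeddings \eqref{eq:d:10}; equivalently one could run the mirrored adding-defect argument on $\zastavam$. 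Your final step (a rational identity plus regularity of both sides implies equality) is fine once the identity is actually established on a locus where both sides are defined, but as written your step (ii) never produces such a locus.
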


In the theorem and in the proof below, we assume that the FMOs on both sides are defined using the same orientation of the underlying Dynkin diagram.  But this assumption is ultimately of little importance, as the FMOs are independent of orientation up to a sign by Remark \ref{remark: FMOs and sign}.

%%%%%%%%
\subsubsection{Examples}
\label{sec: examples}
Although the definition of a general FMO $\M_\bm^\pm(f)$ is given by a complicated rational expression, we recover some familiar functions as special cases:
\begin{equation}
\label{eq: ex1}
Q_i(z) = \M_{\mathbf{0}}^\pm \Big( \prod_{r=1}^{\sv_i} (z-w_{i,r})\Big),\qquad P_i(z) = \M_{e_i}^+\Big( \prod_{r=2}^{\sv_i} (z - w_{i,r}) \Big)
\end{equation}
Here $e_i \in \ZZ^I$ denotes the $i$-th standard basis vector, and we have extended the definition of $\M_\bm^\pm(f)$ to allow for $f\in \Lambda_\bm^\bv[z]$.  Equation (\ref{eq: ex1}) follows easily from Theorem \ref{thm:GKLO-embedding-of-slices} and Definition \ref{def: FMOs}.

Similarly, in \S \ref{sec: Chevalley} below we define functions $P_i^-(z)$, which are the images of $P_i(z)$ under the Chevalley involution. These functions are also related to FMOs: 
\begin{equation}
\label{eq: ex2}
P_i^-(z) = \M_{e_i}^-\Big( \prod_{r=2}^{\sv_i} (z-w_{i,r}) \Big)
\end{equation}

\begin{Remark}
Note that $Q_i(z), P_i(z)$ and $P_i^-(z)$ are all expressed in terms of representation-theoretic data: they are matrix coefficients. The same is therefore true of the corresponding FMOs in (\ref{eq: ex1}) and (\ref{eq: ex2}), which in particular confirms Theorem \ref{thm: FMOs are regular} in these cases.   It would interesting to find a representation-theoretic or group-theoretic interpretation for more general FMOs $\M_\bm^\pm(f)$. We are not aware of such an interpretation even for $G = SL_2$.
\end{Remark}

%%%%%%%%
\subsection{Zastava spaces}
\label{sec: zastava spaces}

Let $\gamma \in \corootlattice_+$. Define $\bbzastavap^\gamma$ to be the following closed subscheme of $G((z^{-1}))$
\begin{align}
  \label{eq:d:9}
\bbzastavap^\gamma = \Big\{ xa z^{-\gamma} \in U_1^+[[z^{-1}]] T_1[[z^{-1}]] z^{-\gamma} \ \suchthat \ \operatorname{val} \big( x a z^{-\gamma} | v_{-\Lambda}  \rangle \big) \geq 0 \text{ for all } \Lambda \Big\}
\end{align}

where $\Lambda \in \weightlattice_{++}$ varies over all dominant weights and $| v_{-\Lambda} \rangle \in V(-\Lambda)$ is the lowest weight vector, and $V(-\Lambda)$ varies over $W(-\Lambda)$ and $S(-\Lambda)$. It is not clear to us whether $\bbzastavap^\gamma$ is a reduced scheme, so we define the \emph{zastava space} $\zastavap^{\gamma}$ to be the reduced scheme of $\bbzastavap^\gamma$.

We have a map $\zastavap^\gamma \rightarrow \AA^{(\gamma)}$ sending the point $x a z^{-\gamma}$ to the colored divisor $\big( Q_i(z) \big)_{i\in I}$ where each $Q_i(z) = z^{\langle \gamma, \fundweight_i \rangle} \langle v_{-\fundweight_i^\ast} | a | v_{- \fundweight_i} \rangle $.

\begin{Remark}
The zastava space is often defined as a space of quasimaps from a rational curve into a flag variety. 
Our definition of the zastava rather is most naturally isomorphic to the equivalent Beilinson-Drinfeld picture of the zastava (see e.g. \cite[\S 6]{Finkelberg-Mirkovic}). Given a point $x a z^{-\gamma} \in \zastavap^\gamma$, we extract a colored divisor as above; let us call it $D$. It is easy to see that $x a z^{-\gamma} \in \zastavap^\gamma$ implies that $x$ is in fact a rational function of $z$ regular away from the support of $D$.  So using $x^{-1}$, we obtain a rational trivialization on the trivial $U$-bundle on $\PP^1$, and our valuation condition exactly translates into the required pole conditions along $D$. 
\end{Remark}

We have a natural map $\overline{\cW}^\lambda_\mu \rightarrow \zastavap^{\lambda - \mu}$ given by $x a z^\mu y \mapsto x a z^{\mu - \lambda}$. Observe that the functions $P_i(z)$ and $Q_i(z)$ on $\overline{\cW}^\lambda_\mu$ are pulled back from regular functions on $\zastavap^{\lambda - \mu}$ given by the same formulas. It is known \cite{FKMM,BDF} that these functions are birational coordinates on $\zastavap^{\lambda - \mu}$. In particular, the map 
$\overline{\cW}^\lambda_\mu \rightarrow \zastavap^{\lambda - \mu}$ is birational.

 \begin{Remark}
 \label{rmk: diagram aut}
   Braverman, Finkelberg, and Nakajima \cite{BFN2} make use of a map  $\overline{\cW}^{\lambda^*}_{\mu^*} \rightarrow \zastavap^{\lambda - \mu}$, where $*$ is the diagram automorphism given by ``minus the longest element of the Weyl group''. Because $G$ is equipped with a pinning, this diagram automorphism induces an automorphism of $G$, which induces an isomorphism $\overline{\cW}^{\lambda}_{\mu} \overset{\sim}{\rightarrow} \overline{\cW}^{\lambda^*}_{\mu^*}$. Our map $\overline{\cW}^\lambda_\mu \rightarrow \zastavap^{\lambda - \mu}$ is exactly the composed map.
  \end{Remark}

\subsubsection{GKLO embedding for the zastava space} Let $\tcA^+(\lambda-\mu) = \kk[ w_{i,r}, u_{i,r}]_{i \in I, r \in [\sv_i]}$, and define $\tcA^+(\lambda-\mu)_\loc$ to be the localization at all $w_{i,r} - w_{i,s}$ for all $1 \leq r \neq s \leq \bv_i$. Note that in these rings we do not invert the elements $u_{i,r}$. Then analogously to Theorem \ref{thm:GKLO-embedding-of-slices}, we have a birational embedding
\begin{equation}
  \label{eq:d:7}
  \kk[\zastavap^{\lambda - \mu}] \hookrightarrow \tcA^+(\lambda-\mu)_\loc
\end{equation}
sending $Q_i(z)$ and $P_i(z)$ to the elements given by formulas \eqref{eq:d:13} and \eqref{eq:d:15}.   Similarly to \eqref{eq:d:6}, we will denote $\cA^+(\lambda-\mu) =   \kk[\zastavap^{\lambda - \mu}]$.

In particular, we have the following commutative square of birational algebra embeddings:
\begin{equation}
  \label{eq:d:8}
\begin{tikzcd}
\kk[\zastavap^{\lambda - \mu}]   \arrow[d,hook] \arrow[r, hook] & \tcA^+(\lambda-\mu)_\loc  \arrow[d,hook] \\
\kk[\overline{\cW}^\lambda_\mu]   \arrow[r,hook]                        & \tcA(\lambda-\mu)_\loc 
\end{tikzcd}
\end{equation}
Observe that the positive FMOs define rational functions on $\zastavap^{\lambda - \mu}$.  In fact they are regular functions, a consequence of the construction of the zastava space in Coulomb branch terms \cite[\S 3(ii)]{BFN2}, see also \S \ref{section: zastava spaces via Coulomb} below.  An analogue of Theorem \ref{thm: FMOs generate for slice} also holds: the positive FMOs $\M_\bm^+(f)$ generate the ring of regular functions on $\zastavap^{\lambda - \mu}$ \cite{Weekes2}.

%%%%%%%%
\subsubsection{Compatibility with closed embeddings}

If $\gamma' \leq  \gamma$ are elements of $\corootlattice_+$, then we have a closed embedding $\zastavap^{\gamma'} \hookrightarrow \zastavap^\gamma$ given by $xa z^{-\gamma'} \mapsto x a z^{-\gamma}$, which is compatible with the closed embedding of  slices. We state this as the following proposition. 

\begin{Proposition}
Suppose $\lambda, \lambda' \in \coweightlattice_{++}$ and $\mu \in \coweightlattice$ with $\lambda \geq \lambda' \geq \mu$. Then the following square commutes. 
  \begin{equation}
    \label{eq:8}
\begin{tikzcd}
\overline{\cW}^{\lambda'}_\mu \arrow[d] \arrow[r, hook] & \overline{\cW}^{\lambda}_\mu  \arrow[d] \\
\zastavap^{\lambda' - \mu}  \arrow[r,hook]                        & \zastavap^{\lambda - \mu}          
\end{tikzcd}
  \end{equation}
\end{Proposition}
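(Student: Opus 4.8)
The plan is to verify commutativity of \eqref{eq:8} directly on the level of points, using the explicit matrix-coefficient descriptions of both slices and zastava spaces. Recall that a point of $\overline{\cW}^{\lambda'}_\mu$ is written $x a z^\mu y$ with $x \in U^+_1[[z^{-1}]]$, $a \in T_1[[z^{-1}]]$, $y \in U^-_1[[z^{-1}]]$, and by the discussion in \S\ref{sec: zastava spaces} the map $\overline{\cW}^{\lambda'}_\mu \to \zastavap^{\lambda' - \mu}$ sends this to $x a z^{\mu - \lambda'}$, while $\overline{\cW}^{\lambda}_\mu \to \zastavap^{\lambda - \mu}$ sends $x a z^\mu y$ to $x a z^{\mu - \lambda}$. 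The closed embedding \eqref{eq:d:10} is (on points) the identity inclusion coming from $\overline{\XX^{\lambda'}} \subset \overline{\XX^\lambda}$, so it sends $x a z^\mu y$ to the same element $x a z^\mu y$. Finally the bottom map $\zastavap^{\lambda' - \mu} \hookrightarrow \zastavap^{\lambda - \mu}$ is defined (on points) by $x' a' z^{-\gamma'} \mapsto x' a' z^{-\gamma}$ where $\gamma' = \lambda' - \mu$, $\gamma = \lambda - \mu$; that is, it keeps the $U^+_1[[z^{-1}]]$- and $T_1[[z^{-1}]]$-factors unchanged and only replaces the central cocharacter. So both ways around the square send $x a z^\mu y$ to $x a z^{\mu - \lambda}$: going right-then-down gives $x a z^{\mu - \lambda}$ directly, and going down-then-right first produces $x a z^{\mu - \lambda'}$ and then replaces $z^{-\gamma'} = z^{\mu-\lambda'}$ by $z^{-\gamma} = z^{\mu - \lambda}$, again $x a z^{\mu-\lambda}$. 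This gives commutativity as maps of sets of points.

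To upgrade this to commutativity of scheme morphisms, I would argue that it suffices to check agreement on coordinate rings, and here the cleanest route is through the GKLO pictures. Both composites $\overline{\cW}^{\lambda'}_\mu \to \zastavap^{\lambda - \mu}$ are dominant (indeed the vertical maps are birational by \S\ref{sec: zastava spaces}, and the horizontal maps are closed embeddings), so two such morphisms agreeing on a dense open — or equivalently inducing the same map on function fields — must coincide. Concretely, the functions $P_i(z), Q_i(z)$ are birational coordinates on each of the four spaces, and on all four they are given by the \emph{same} matrix-coefficient formulas $Q_i(z) = z^{\langle \lambda, \fundweight_i\rangle}\langle v^*_{-\fundweight_i}| g | v_{-\fundweight_i}\rangle$ etc. (with the appropriate power of $z$); one checks that each of the four morphisms in \eqref{eq:8} pulls back $P_i, Q_i$ on the target to $P_i, Q_i$ on the source, using that the matrix coefficient $\langle v^*_{-\fundweight_i}| x a z^\mu y | v_{-\fundweight_i}\rangle$ depends only on the $a$-factor and transforms in the evident way under $z^\mu \mapsto z^{\mu - \lambda}$. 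Since the birational coordinates determine a morphism between (reduced, irreducible) spaces, both composites in the square agree.

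The only genuine subtlety — and I expect this to be the main point requiring care rather than the set-theoretic chase — is the transition from ``equal on a dense open / equal on function fields'' to ``equal as morphisms of schemes.'' This needs that the relevant schemes are reduced and separated so that the equalizer of the two composites is a closed subscheme containing a dense open, hence everything; reducedness of $\overline{\cW}^\lambda_\mu$ and of $\zastavap^\gamma$ is exactly what is recorded in \S\ref{sec:generalized-affine-grassmannian-slices} (the former by \cite[\S 2(ii)]{BFN2}, the latter by construction), and separatedness holds since all these are closed sub-ind-schemes of $G((z^{-1}))$. An alternative, which avoids even this, is to observe that all four maps are literally defined by the substitution $x a z^{\mu} y \rightsquigarrow x a z^{(\text{shift})}$ on the underlying closed subschemes of $G((z^{-1}))$, so commutativity of \eqref{eq:8} is just the identity $\mu - \lambda = (\mu - \lambda') - (\gamma - \gamma')$ of cocharacters together with functoriality of the "forget $y$" and "change the cocharacter" operations; I would present this version as the proof, with the GKLO/birational-coordinate remark as a sanity check.
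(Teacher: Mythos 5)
Your proof is correct and is essentially the argument the paper leaves implicit: the Proposition is stated without proof because all four arrows are defined by explicit substitutions ($x a z^\mu y \rightsquigarrow x a z^{\mathrm{shift}}$, forgetting $y$ and shifting the cocharacter) on closed subschemes of $G((z^{-1}))$, and your substitution/functoriality argument together with the point-level chase is exactly that check, with the reducedness/separatedness remarks supplying the scheme-theoretic care the paper omits. One small blemish in your sanity-check paragraph: the horizontal embeddings do \emph{not} pull back $Q_i$ to $Q_i$ but rather to $z^{\langle \lambda-\lambda', \fundweight_i\rangle} Q_i$ (compare the adding-defect formula $Q_i \mapsto \overline{Q_i}\,L_i$), though both composites still induce the same map on function fields, so your main argument is unaffected.
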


We see then that for positive FMOs, Theorem \ref{thm:compatibility-of-FMOs-with-embeddings-of-slices} reduces to the following analogous theorem for zastava spaces.
\begin{Theorem}
  \label{thm:compatibility-of-FMOs-with-embeddings-of-zastava}
  Let $\gamma', \gamma \in \corootlattice_+$ with $\gamma' \leq \gamma$. Write $\gamma = \sum_{i \in I} \sv_i \simplecoroot_i$ and $\gamma' = \sum_{i \in I} \sv_i' \simplecoroot_i$. Consider the closed embedding $\zastavap^{\gamma'} \hookrightarrow \zastavap^\gamma$ and the corresponding restriction map $\kk[\zastavap^\gamma] \rightarrow \kk[\zastavap^{\gamma'}]$ of functions. Let $\bm = (\sm_i)_{i\in I} \in \ZZ^I$ with $\mathbf{0} \leq \bm \leq \bv$, and let $f \in \Lambda_\bm$. Then under the restriction map we have
 \begin{equation}
   \label{eq:d:11}
   \M_{\bm}^{+}(f) \mapsto
   \begin{cases}
    \M_\bm^{+}(\tilde{f}) & \text{if } \bm \leq \bv'  \\ 0 & \text{otherwise}  
   \end{cases}
 \end{equation}
\end{Theorem}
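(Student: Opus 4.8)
The plan is to reduce everything to an explicit computation in the GKLO model $\tcA^+(\lambda-\mu)_\loc$, where the restriction map along $\zastavap^{\gamma'} \hookrightarrow \zastavap^\gamma$ admits a concrete description. First I would pin down this description: the closed embedding $xa z^{-\gamma'} \mapsto x a z^{-\gamma}$ amounts, on colored divisors, to multiplying $Q_i(z)$ by $z^{\sv_i - \sv_i'}$, i.e. adding a defect of order $\sv_i - \sv_i'$ at the origin. In terms of the coordinate ring, the image is cut out by the vanishing of the appropriate leading coefficients, and the upshot is that the restriction map corresponds to the specialization
\begin{equation}
 w_{i,r} \mapsto \begin{cases} w_{i,r} & r \leq \sv_i' \\ 0 & r > \sv_i' \end{cases}, \qquad \su_{i,r} \mapsto \begin{cases} \su_{i,r} & r \leq \sv_i' \\ * & r > \sv_i' \end{cases}
\end{equation}
where the specialization of the ``extra'' $\su$-variables needs to be identified correctly. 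The key point I would establish is that this specialization is well-defined on the relevant subrings — a priori the denominators $w_{i,r} - w_{i,s}$ in $\tcA^+(\lambda-\mu)_\loc$ obstruct naive specialization, but when several of the $w$'s go to $0$ simultaneously, one must check that the FMO expressions $\M^+_\bm(f)$ have no pole along this locus (indeed they are regular functions on $\zastavap^\gamma$, by \cite{Weekes2}, so their restrictions make sense).

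\textbf{Main computation.} Granting the above, I would plug the specialization directly into formula \eqref{eq:positive-FMOs-images-under-GKLO}. The sum is over tuples $\Gamma = (\Gamma_i)$ with $\Gamma_i \subseteq [\sv_i]$, $\#\Gamma_i = \sm_i$. Split each index set as $[\sv_i] = [\sv_i'] \sqcup \{\sv_i'+1,\dots,\sv_i\}$ and write $\Gamma_i = \Gamma_i' \sqcup \Gamma_i''$ accordingly. When $\bm \not\leq \bv'$, every $\Gamma$ is forced to meet the ``extra'' block in some coordinate, and I expect that each such term either vanishes (because a factor $w_{\source(a),r} - w_{\target(a),s}$ or a numerator monomial becomes $0$, or because of a sign/multiplicity cancellation among the terms sharing the same $\Gamma'$) — this is the ``$\mapsto 0$'' case. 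When $\bm \leq \bv'$, the terms with $\Gamma_i'' \neq \emptyset$ for some $i$ should all cancel or vanish, leaving exactly the terms with $\Gamma_i = \Gamma_i' \subseteq [\sv_i']$; on those terms the specialization $f \mapsto \tilde f$ is precisely \eqref{eq:d:103}, the denominator $\prod (w_{i,r} - w_{i,s})$ over $r \in \Gamma_i, s \notin \Gamma_i$ splits into the part with $s \leq \sv_i'$ (which reproduces the zastava-$\gamma'$ denominator) times the part with $s > \sv_i'$ (which specializes to $\prod_{r \in \Gamma_i'} w_{i,r}^{\sv_i - \sv_i'}$, up to sign), and similarly the arrow-numerators factor; I would then check that these ``extra'' monomial factors in numerator and denominator cancel exactly, yielding $\M^+_\bm(\tilde f)$ in $\tcA^+(\lambda'-\mu)_\loc$. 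This is the content of the ``adding defect'' map of \S\ref{sec: adding defect} alluded to in the introduction.

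\textbf{The main obstacle.} The delicate step is the regularity/specialization issue: the FMOs live in the localized ring where all $w_{i,r}-w_{i,s}$ are invertible, but the embedding $\zastavap^{\gamma'}\hookrightarrow\zastavap^\gamma$ pushes several $w_{i,r}$ to the common value $0$, so one is specializing along a locus where many of the inverted elements degenerate simultaneously. One must argue that the combination appearing in $\M^+_\bm(f)$ — not each summand individually — extends regularly across this locus, and compute its value there. I would handle this either (i) geometrically, by identifying the restriction map with the pullback along the defect-adding map on zastava spaces and using that $\M^+_\bm(f)$ is a genuine regular function on $\zastavap^\gamma$ so its restriction is automatically well-defined, and then computing the value on a dense open subset where the specialization is unambiguous; or (ii) combinatorially, by a careful partial-fractions / residue bookkeeping showing the poles cancel in the sum over $\Gamma$ with fixed $\Gamma'$. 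Approach (i) is cleaner and is what I would pursue, reserving (ii) as a cross-check. Once the value on a dense open is computed to be $\M^+_\bm(\tilde f)$ (resp.\ $0$), both sides being regular forces equality everywhere, and the theorem follows.
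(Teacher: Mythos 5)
Your high-level plan --- approach (i), factoring the restriction through the defect-adding map and computing where the extra roots stay generic --- is in outline the paper's route, but the concrete computation you sketch rests on an incorrect identification of the transition map in GKLO coordinates, and that identification is exactly where the content of the proof lies. The restriction (equivalently: pullback along adding defect, then evaluating the extra divisor at $0$) does fix $w_{i,r}$ for $r \leq \sv_i'$ and kill the extra $w$'s, but it does \emph{not} fix the retained variables $\su_{i,r}$, $r \leq \sv_i'$. Since $P_i(z)$ pulls back to $\overline{P_i}(z)L_i(z)$ as in \eqref{eq:d:17}, the retained $\su_{i,r}$ must be rescaled by the ratio in \eqref{eq:d:23}, which at the divisor supported at $0$ reads
\begin{equation*}
\su_{i,r} \ \mapsto \ \frac{w_{i,r}^{\sv_i''}}{\prod_{a \in \arrowset :\, \source(a)=i} (-w_{i,r})^{\sv''_{\target(a)}}}\, \su_{i,r}.
\end{equation*}
With your recipe ($\su_{i,r}\mapsto \su_{i,r}$) the ``extra'' monomials in \eqref{eq:positive-FMOs-images-under-GKLO} do not cancel: each surviving term acquires the leftover factor $\prod_i\prod_{r\in\Gamma_i}\bigl(\prod_{a:\source(a)=i}(-w_{i,r})^{\sv''_{\target(a)}}\bigr)/w_{i,r}^{\sv_i''}$. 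Already for $G=SL_2$ (one vertex, no arrows) your computation yields $\sum_{r\leq \sv'} \su_r\big/\bigl(w_r^{\sv''}\prod_{s\leq \sv', s\neq r}(w_r-w_s)\bigr)$ instead of $\M^+_{(1)}(1)$ on $\zastavap^{\gamma'}$, so the claimed numerator/denominator cancellation is false; in the correct argument the cancellation is against the rescaling of $\su_\Gamma$, and pinning down that rescaling is the key lemma (the map $\phi$ and the commuting square of Theorem \ref{thm:GKLO-commutative-square-for-adding-defect}), which your proposal never supplies (you flag only the \emph{extra} $\su$'s as needing identification).

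A second, related gap is your treatment of the tuples $\Gamma$ meeting the extra block: under the naive specialization those terms are not zero but singular (their denominators contain $w_{i,r}-w_{i,s}$ with both indices in the extra block), so no termwise vanishing or cancellation argument is available, and regularity of $\M^+_\bm(f)$ only tells you the restriction \emph{exists}, not how to compute it. The paper circumvents both issues by never specializing the extra $w$'s until the last step: one pulls back along $\zastavap^{\gamma'}\times\AA^{(\gamma'')}\to\zastavap^{\gamma}$, where the extra roots are generic, $\phi$ is an honest homomorphism of localized rings, the extra $\su$'s genuinely map to zero (so the unwanted $\Gamma$'s drop out), and the remaining terms assemble into $\sum \M^+_\bm(f^{(1)})\otimes f^{(2)}$ (Theorem \ref{thm:compatibility-of-FMOs-with-adding-defect-map-of-zastava}); only then is the $\AA^{(\gamma'')}$-factor evaluated at the divisor supported at $0$, which is harmless because that answer is polynomial in the extra variables. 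To make your approach (i) work you would have to carry out precisely this $\phi$-computation, so as written the proposal has a genuine gap at its central step.
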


The difficulty with proving Theorem \ref{thm:compatibility-of-FMOs-with-embeddings-of-slices} is that the FMOs are expressed as rational expressions, and the image of the closed embedding 
$ \overline{\cW}^{\lambda'}_\mu \hookrightarrow  \overline{\cW}^\lambda_\mu$
 lies in the vanishing locus of the denominators of those rational expressions. Therefore, it is not clear how to restrict these functions to the closed subset. Unfortunately, Theorem \ref{thm:compatibility-of-FMOs-with-embeddings-of-zastava} also suffers from this difficulty.

Instead, our strategy is to prove Theorem \ref{thm:compatibility-of-FMOs-with-embeddings-of-zastava} by proving a stronger statement (Theorem \ref{thm:compatibility-of-FMOs-with-adding-defect-map-of-zastava}) for the ``adding defect'' map of zastava spaces that we will recall below. The image of the adding defect map is no longer contained in the vanishing locus of the denominators of the rational expressions defining the FMOs, so we can restrict the FMOs.

\subsection{Adding defect}
\label{sec: adding defect}
Let $\gamma'' \in \corootlattice_+$. Observe that $\AA^{(\gamma'')}$ is isomorphic to the variety of $a'' \in T_1[[z^{-1}]]$ such that $ z^{\langle \gamma'', \fundweight_i\rangle} \langle v_{-\fundweight_i}^* | a'' | v_{-\fundweight_i} \rangle$ is a polynomial in $z$ for all $i \in I$. The isomorphism sends $a''$ to the colored divisor $(L_i(z))_{i \in I}$, where $L_i(z) = z^{\langle \gamma'', \fundweight_i\rangle} \langle v_{-\fundweight_i}^* | a'' | v_{-\fundweight_i} \rangle$.

\begin{Definition}
  \label{def:adding-defect}
  Let $\gamma', \gamma'' \in \corootlattice_+$ and let $\gamma = \gamma' + \gamma''$. The \emph{adding-defect} map
  \begin{equation}
    \label{eq:d:14}
     \zastavap^{\gamma'} \times \AA^{(\gamma'')} \rightarrow \zastavap^\gamma
  \end{equation}
  is defined by
  \begin{equation}
    \label{eq:d:25}
    (x'a'z^{-\gamma'}, a'') \mapsto x' a' a'' z^{-\gamma}
  \end{equation}
\end{Definition}

Fix $\gamma',\gamma''$, and $\gamma$ as in the definition. Recall the regular functions $Q_i(z)$ and $P_i(z)$ on $\zastavap^\gamma$. We will write $\overline{Q_i}(z)$ and $\overline{P_i}(z)$ for the corresponding functions on $\zastavap^{\gamma'}$.  As above, we have regular functions $L_i(z)$  on $\AA^{(\gamma'')}$. It follows directly from the formula defining the adding defect map that
\begin{equation}
  \label{eq:d:16}
  Q_i(z) \mapsto \overline{Q_i}(z) L_i(z) 
\end{equation}
and:
\begin{equation}
  \label{eq:d:17}
  P_i(z) \mapsto \overline{P_i}(z) L_i(z) 
\end{equation}
Furthermore, because $Q_i(z)$ and $P_i(z)$ are birational coordinates, we see that these formulas uniquely determine the map.

\subsubsection{GKLO and adding defect}

Define $\bv' = (\sv'_i)_i, \bv'' = (\sv''_i)_i$, and $\bv = (\sv_i)_i$ by $\gamma^\bullet = \sum_{i \in I} \sv^\bullet_i \simplecoroot_i$ where $\bullet \in \{','',\emptyset\}$.

Define $\Lambda^{(\bv',\bv]} = \kk[w_{i,r}]_{i \in I, r \in \{\sv_i'+1, \ldots, \sv_i\}}$. In addition to the GKLO embeddings for the zastava space, we have an embedding
\begin{equation}
  \label{eq:d:18}
 \kk[ \AA^{(\gamma'')} ] \hookrightarrow \Lambda^{(\bv',\bv]}
\end{equation}
given by: 
\begin{equation}
  \label{eq:d:19}
  L_i(z) \mapsto \prod_{r = \sv_i'+ 1}^{\sv_i} (z - w_{i,r}) 
\end{equation}
Next, define
\begin{equation}
  \label{eq:d:20}
  \left(\tcA^+(\gamma') \otimes \Lambda^{(\bv',\bv]} \right)_{\loc}
\end{equation}
to be the localization given by inverting $w_{i,r} - w_{j,s}$ for all pairs $(i,r) \neq (j,s)$.  The GKLO embedding and \eqref{eq:d:18} induce an embedding:
\begin{equation}
  \label{eq:d:21}
  \kk[ \zastavap^{\gamma'} \times \AA^{(\gamma'')}] \hookrightarrow \left(\tcA^+(\gamma') \otimes \Lambda^{(\bv',\bv]}   \right)_{\loc}
\end{equation}

Finally, we define a map
\begin{equation}
  \label{eq:d:22}
  \phi : \tcA^+(\gamma)_\loc \rightarrow \left(\tcA^+(\gamma') \otimes \Lambda^{(\bv',\bv]}  \right)_{\loc} 
\end{equation}
by sending $w_{i,r} \mapsto w_{i,r}$ and by sending:
\begin{equation}
  \label{eq:d:23}
  \su_{i,r} \mapsto
  \begin{cases}
    \frac{ \prod_{s = \sv_i' + 1}^{\sv_i} (w_{i,r} - w_{i,s})}{\prod_{a \in E :\, \source(a) = i} \prod_{t = \sv'_{\target(a)}+1}^{\sv_{\target(a)}} (w_{\target(a),t} - w_{i,r})} \su_{i,r}, & \text{ if } r \leq \sv_i' \\
    0, & \text{otherwise} 
  \end{cases}
\end{equation}
for all $i \in I$ and $1\leq r \leq \sv_i$.

\begin{Theorem}
  \label{thm:GKLO-commutative-square-for-adding-defect}
Let $\gamma',\gamma''$, and $\gamma$ be as in Definition \ref{def:adding-defect}, and let $\bv',\bv'',\bv$ be as above. The following diagram commutes:
\begin{equation}
  \label{eq:d:24}
  \begin{tikzcd}
    \kk[\zastavap^\gamma]   \arrow[d] \arrow[r, hook] & \tcA^+(\gamma)_\loc  \arrow[d,"\phi"] \\
\kk[ \zastavap^{\gamma'} \times \AA^{(\gamma'')}]   \arrow[r,hook]                        & 
\left(\tcA^+(\gamma') \otimes \Lambda^{(\bv',\bv]} \right)_{\loc}
  \end{tikzcd}
\end{equation}
\end{Theorem}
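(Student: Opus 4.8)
The plan is to verify commutativity of the square \eqref{eq:d:24} by checking it on the birational coordinates $Q_i(z)$ and $P_i(z)$, which generate $\kk[\zastavap^\gamma]$ as a field of fractions and in fact determine any algebra map out of $\kk[\zastavap^\gamma]$ uniquely (as noted after Definition \ref{def:adding-defect}). Since all four arrows are embeddings into localizations, it suffices to show that the two composites $\kk[\zastavap^\gamma]\to (\tcA^+(\gamma')\otimes\Lambda^{(\bv',\bv]})_\loc$ agree after applying them to $Q_i(z)$ and $P_i(z)$; equality of rational functions can then be upgraded to equality of algebra maps because $Q_i,P_i$ are birational coordinates and the bottom horizontal arrow is injective.

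The first and easier half is $Q_i(z)$. Going right-then-down: the top arrow sends $Q_i(z)\mapsto \prod_{r=1}^{\sv_i}(z-w_{i,r})$ by \eqref{eq:d:13}, and $\phi$ fixes all $w_{i,r}$, so the image is $\prod_{r=1}^{\sv_i}(z-w_{i,r})$. Going down-then-right: by \eqref{eq:d:16}, $Q_i(z)\mapsto \overline{Q_i}(z)L_i(z)$ in $\kk[\zastavap^{\gamma'}\times\AA^{(\gamma'')}]$, and then the bottom embedding sends $\overline{Q_i}(z)\mapsto \prod_{r=1}^{\sv_i'}(z-w_{i,r})$ (GKLO for $\zastavap^{\gamma'}$) and $L_i(z)\mapsto \prod_{r=\sv_i'+1}^{\sv_i}(z-w_{i,r})$ by \eqref{eq:d:19}. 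The product of these is exactly $\prod_{r=1}^{\sv_i}(z-w_{i,r})$, so the two agree. This already pins down the definition of $\phi$ on the $w$-variables and explains why $\phi(w_{i,r})=w_{i,r}$ is forced.

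The substantive half is $P_i(z)$, and this is where the precise formula \eqref{eq:d:23} for $\phi(\su_{i,r})$ must be reverse-engineered and then checked. Going right-then-down: apply \eqref{eq:d:15} to get $P_i(z)\mapsto \sum_{r=1}^{\sv_i}\big(\prod_{s\neq r}\tfrac{z-w_{i,s}}{w_{i,r}-w_{i,s}}\big)\prod_{a:\source(a)=i}\prod_{t=1}^{\sv_{\target(a)}}(w_{\target(a),t}-w_{i,r})\,\su_{i,r}$, then apply $\phi$. For $r>\sv_i'$ the term dies since $\phi(\su_{i,r})=0$; for $r\le\sv_i'$ substitute \eqref{eq:d:23}, and the factor $\prod_{s=\sv_i'+1}^{\sv_i}(w_{i,r}-w_{i,s})$ in the numerator of $\phi(\su_{i,r})$ combines with the surviving Lagrange-interpolation factor, while the denominator of \eqref{eq:d:23} cancels the "long" part of the arrow product $\prod_{t=1}^{\sv_{\target(a)}}$, leaving only $\prod_{t=1}^{\sv'_{\target(a)}}$. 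Going down-then-right: by \eqref{eq:d:17}, $P_i(z)\mapsto \overline{P_i}(z)L_i(z)$, then the bottom embedding sends $\overline{P_i}(z)$ via the GKLO formula \eqref{eq:d:15} for $\zastavap^{\gamma'}$ (sums over $r\le\sv_i'$, arrow products over $t\le\sv'_{\target(a)}$, variables $\su_{i,r}$) and $L_i(z)\mapsto\prod_{r=\sv_i'+1}^{\sv_i}(z-w_{i,r})$. The remaining task is the bookkeeping identity: for each $r\le\sv_i'$, the Lagrange basis polynomial for the node set $\{w_{i,s}:s\le\sv_i,\ s\neq r\}$ equals the Lagrange basis polynomial for $\{w_{i,s}:s\le\sv_i',\ s\neq r\}$ times $\prod_{s=\sv_i'+1}^{\sv_i}\tfrac{z-w_{i,s}}{w_{i,r}-w_{i,s}}$, and $\prod_{s=\sv_i'+1}^{\sv_i}(z-w_{i,s})=L_i(z)\mapsto\prod(z-w_{i,s})$; matching these term-by-term against the two expansions gives equality. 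I expect this term-by-term matching — keeping straight which arrow products truncate to $\sv'_{\target(a)}$ versus $\sv_{\target(a)}$, and confirming the $\prod_{s=\sv_i'+1}^{\sv_i}(w_{i,r}-w_{i,s})$ factor lands in the right place — to be the main obstacle; it is entirely a routine but delicate algebraic verification, and the conceptual content is simply that \eqref{eq:d:23} is the unique choice of $\phi(\su_{i,r})$ making the $P_i$-square commute given that the $Q_i$-square forces $\phi(w_{i,r})=w_{i,r}$.
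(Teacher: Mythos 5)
Your proposal is correct and follows essentially the same route as the paper: reduce to checking the composites on the birational coordinates $Q_i(z)$ and $P_i(z)$, observe the $Q_i$ case is immediate since $\phi$ fixes the $w$-variables, and verify that the formula \eqref{eq:d:23} for $\phi(\su_{i,r})$ is precisely engineered so that the Lagrange factors and arrow products truncate to give the GKLO image of $\overline{P_i}(z)$ times $L_i(z)$. Your term-by-term bookkeeping for $P_i(z)$ is the (correct) expansion of the paper's one-line remark that \eqref{eq:d:23} is exactly what makes the $P_i$ case hold.
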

\begin{proof}
It suffices to check that $Q_i(z)$ and $P_i(z)$ map according to \eqref{eq:d:16} and \eqref{eq:d:17} under the top and right arrows. This is immediate for $Q_i(z)$, and \eqref{eq:d:23} is exactly what makes it hold for $P_i(z)$. 

\end{proof}

\subsubsection{FMOs and adding defect}

Fix $\bm = (\sm_i)_{i\in I} \in \ZZ^I$ such that $\mathbf{0} \leq \bm \leq \bv$. We have a embedding:
\begin{equation}
  \label{eq:d:26}
  \Lambda^\bv_\bm \hookrightarrow \Lambda^{\bv'}_\bm \otimes \Lambda^{(\bv',\bv]}
\end{equation}
Given $f \in \Lambda^\bv_\bm$, we use Sweedler notation and write
\begin{equation}
  \label{eq:d:27}
  f = \sum f^{(1)} \otimes f^{(2)} 
\end{equation}
for the image of $f$ under \eqref{eq:d:26}. 

\begin{Theorem}
  \label{thm:compatibility-of-FMOs-with-adding-defect-map-of-zastava}
  Let $\gamma',\gamma''$, and $\gamma$ be as in Definition \ref{def:adding-defect}, and let $\bv',\bv'',\bv$ be as above.   Let $\bm = (\sm_i)_{i\in I} \in \ZZ^I$ with $\mathbf{0} \leq \bm \leq \bv$, and let $f \in \Lambda^\bv_\bm$. Write $f = \sum f^{(1)} \otimes f^{(2)}$ under the embedding \eqref{eq:d:26}.
Under the restriction map on coordinate rings corresponding to the adding defect map \eqref{eq:d:14} we have:
 \begin{equation}
   \label{eq:d:11}
   \M_{\bm}^{+}(f) \mapsto
   \begin{cases}
    \sum \M_\bm^{+}(f^{(1)}) \otimes f^{(2)}, & \text{if } \sm_i \leq \sv'_i \text{ for all } i \in I, \\ 0, & \text{otherwise}  
   \end{cases}
 \end{equation}
\end{Theorem}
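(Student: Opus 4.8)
The plan is to deduce this from the commutative square of Theorem~\ref{thm:GKLO-commutative-square-for-adding-defect}. Since the positive FMO $\M_\bm^+(f)$ is a regular function on $\zastavap^\gamma$ (the analogue of Theorem~\ref{thm: FMOs are regular} for zastava spaces) and the bottom horizontal arrow of \eqref{eq:d:24} is injective, the pullback of $\M_\bm^+(f)$ along the adding-defect map \eqref{eq:d:14} is determined by its image in $\left(\tcA^+(\gamma')\otimes\Lambda^{(\bv',\bv]}\right)_\loc$, which by \eqref{eq:d:24} equals $\phi$ applied to the explicit expression \eqref{eq:positive-FMOs-images-under-GKLO} for $\M_\bm^+(f)$ in $\tcA^+(\gamma)_\loc$. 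So the entire proof reduces to a direct computation of $\phi\big(\M_\bm^+(f)\big)$ and a comparison with the expression \eqref{eq:positive-FMOs-images-under-GKLO} defining the FMO $\M_\bm^+(f^{(1)})$ on $\zastavap^{\gamma'}$.

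First I would apply $\phi$ to \eqref{eq:positive-FMOs-images-under-GKLO} term by term in the sum over $\Gamma=(\Gamma_i)_{i\in I}$. Because $\phi(\su_{i,r})=0$ whenever $r>\sv'_i$, the factor $\su_\Gamma=\prod_{i\in I}\prod_{r\in\Gamma_i}\su_{i,r}$ maps to zero unless $\Gamma_i\subseteq[\sv'_i]$ for every $i$. If some $\sm_i>\sv'_i$ this is never satisfied, so $\phi\big(\M_\bm^+(f)\big)=0$ and we land in the second case; otherwise $\bm\leq\bv'$ and only the subsets $\Gamma$ with $\Gamma_i\subseteq[\sv'_i]$ for all $i$ contribute.

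For such $\Gamma$, the key point is a cancellation. Writing, for each $i$, $[\sv_i]\setminus\Gamma_i=\big([\sv'_i]\setminus\Gamma_i\big)\sqcup\{\sv'_i+1,\dots,\sv_i\}$, and likewise for the target of each arrow, the numerator and denominator of the rational factor in \eqref{eq:positive-FMOs-images-under-GKLO} each split into an ``old-variable'' part (indices $s\leq\sv'_i$, resp.\ $s\leq\sv'_{\target(a)}$) and a ``new-variable'' part (indices $s>\sv'_i$, resp.\ $s>\sv'_{\target(a)}$). The new-variable part of the denominator is $\prod_{i\in I}\prod_{r\in\Gamma_i}\prod_{s=\sv'_i+1}^{\sv_i}(w_{i,r}-w_{i,s})$ and the new-variable part of the numerator is $\prod_{a\in\arrowset}\prod_{r\in\Gamma_{\source(a)}}\prod_{s=\sv'_{\target(a)}+1}^{\sv_{\target(a)}}(w_{\target(a),s}-w_{\source(a),r})$; these cancel exactly, with matching signs, against the numerator and the denominator, respectively, of $\prod_{i\in I}\prod_{r\in\Gamma_i}\big(\phi(\su_{i,r})/\su_{i,r}\big)$ as prescribed by \eqref{eq:d:23}. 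What survives is precisely the rational factor of \eqref{eq:positive-FMOs-images-under-GKLO} for $\zastavap^{\gamma'}$ in the variables $w_{i,r}$ with $r\in[\sv'_i]$, multiplied by $\su_\Gamma$ and by $f|_\Gamma$. Finally, since $\Gamma_i\subseteq[\sv'_i]$, the permutation realizing $f\mapsto f|_\Gamma$ can be taken inside $S_{\bv'}$ and hence fixes the new variables, so that $f|_\Gamma=\sum f^{(1)}|_\Gamma\otimes f^{(2)}$ for the Sweedler decomposition $f=\sum f^{(1)}\otimes f^{(2)}$ under \eqref{eq:d:26}. As the $f^{(2)}$ involve only the new variables, which appear nowhere else in the surviving expression, summing over $\Gamma$ identifies $\phi\big(\M_\bm^+(f)\big)$ with $\sum\M_\bm^+(f^{(1)})\otimes f^{(2)}$, the FMO on the right being taken on $\zastavap^{\gamma'}$. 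Combined with the first paragraph this is the claim.

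The only genuine work is the index bookkeeping in the cancellation: one must keep careful track of which range each index $s$ runs over and verify that the cancelling factors agree on the nose — in particular that signs like $w_{i,r}-w_{i,s}$ versus $w_{i,s}-w_{i,r}$ line up, which is exactly the point of the orientation of the fractions in \eqref{eq:d:23}. Everything else is formal: injectivity of the GKLO embeddings, compatibility of Sweedler notation with the restriction $f\mapsto f|_\Gamma$, and the observation that $\sum\M_\bm^+(f^{(1)})\otimes f^{(2)}$ genuinely lies in $\kk[\zastavap^{\gamma'}]\otimes\kk[\AA^{(\gamma'')}]=\kk[\zastavap^{\gamma'}\times\AA^{(\gamma'')}]$, since $f$ — being partially symmetric with $\sm_i\leq\sv'_i$ — is symmetric in the new variables.
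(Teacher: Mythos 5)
Your proposal is correct and follows essentially the same route as the paper's own proof: reduce via the commutative square of Theorem \ref{thm:GKLO-commutative-square-for-adding-defect} (together with injectivity of the GKLO-type embeddings) to computing $\phi$ of the expression \eqref{eq:positive-FMOs-images-under-GKLO}, note that $\phi(\su_{i,r})=0$ for $r>\sv'_i$ kills all terms unless $\bm\leq\bv'$ and $\Gamma_i\subseteq[\sv'_i]$, split numerator and denominator into old- and new-variable parts, and observe that the new-variable parts cancel exactly against the factors in \eqref{eq:d:23} (this is the paper's equation \eqref{eq:d:32}), with the Sweedler compatibility $f\vert_\Gamma=\sum f^{(1)}\vert_\Gamma\cdot f^{(2)}$ finishing the identification. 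No gaps; the sign bookkeeping you flag is indeed the only point requiring care, and it works out as you describe.
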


Observe that the closed embedding $ \zastavap^{\gamma'}   \hookrightarrow \zastavap^\gamma$ factors through the adding defect map as $ \zastavap^{\gamma'} \hookrightarrow \zastavap^{\gamma'} \times \AA^{(\gamma'')} \rightarrow \zastavap^\gamma$ where the closed embedding $ \zastavap^{\gamma'} \hookrightarrow \zastavap^{\gamma'} \times \AA^{(\gamma'')}$ is given by the closed point of $\AA^{(\gamma'')}$ corresponding to the unique colored divisor supported at $0 \in \AA^1$. Observe also that given $f \in \Lambda^\bv_\bm$ expressed as $\sum f^{(1)} \otimes f^{(2)}$ under \eqref{eq:d:26}, we have $\tilde{f} = \sum f^{(1)} f^{(2)}(0)$, where $f^{(2)}(0)$ is obtained by setting all variables $w_{i,r}$ with $r > \sv'_i$ equal to zero. We therefore obtain Theorem \ref{thm:compatibility-of-FMOs-with-embeddings-of-zastava} as a corollary of Theorem \ref{thm:compatibility-of-FMOs-with-adding-defect-map-of-zastava}. As explained above, we therefore obtain Theorem \ref{thm:compatibility-of-FMOs-with-embeddings-of-slices} for \emph{positive} FMOs.

\begin{proof}[Proof of Theorem \ref{thm:compatibility-of-FMOs-with-adding-defect-map-of-zastava}]
  Recall the rational expression \eqref{eq:positive-FMOs-images-under-GKLO} for $M_\bm^+(f)$. By Theorem \ref{thm:GKLO-commutative-square-for-adding-defect}, we are reduced to applying the map $\phi$ to this rational expression. Because all $\su_{i,r}$ with $r > \sv_i'$ map to zero under $\phi$ in the sum  \eqref{eq:positive-FMOs-images-under-GKLO}, we need only consider $\Gamma = (\Gamma_i)_{i\in I}$ such that $\Gamma_i \subseteq [\sv_i']$. That is, we are computing:
  \begin{equation}
    \label{eq:d:28}
\phi \left(    
\sum_{\substack{\Gamma = (\Gamma_i)_{i \in I} \\ \Gamma_i \subseteq [\sv'_i] \subseteq [\sv_i],  \# \Gamma_i = \sm_i}} f\vert_\Gamma \cdot  \frac{\prod_{ a \in \arrowset} \prod_{r\in \Gamma_{\source(a)}, s\notin \Gamma_{\target(a)}} (w_{\target(a),s} - w_{\source(a),r})}{\prod_{i \in I} \prod_{r \in \Gamma_i, s\notin \Gamma_i}(w_{i,r} - w_{i,s})}  \su_{\Gamma} 
\right)
  \end{equation}
If $\sm_i > \sv_i'$ for any $i$, we immediately see that the sum is empty, and we get $0$. So we may assume $\sm_i \leq \sv_i'$ for all $i$.

Consider a single term in the sum \eqref{eq:d:28}, i.e. fix a $\Gamma = (\Gamma_i)_{i \in I}$ such that $\Gamma_i \subseteq [\sv'_i] \subseteq [\sv_i]$  and $\# \Gamma_i = \sm_i$ for all $i$. First, observe that:
\begin{equation}
  \label{eq:d:29}
  f\vert_\Gamma = \sum f^{(1)}\vert_\Gamma \cdot f^{(2)}
\end{equation}
In the numerator of  our single term, we write
\begin{align}
  \label{eq:d:30}
  &\prod_{ a \in \arrowset} \prod_{r\in \Gamma_{\source(a)}, s\notin \Gamma_{\target(a)}} (w_{\target(a),s} - w_{\source(a),r}) = \\
  &\left(\prod_{ a \in \arrowset} \prod_{r\in \Gamma_{\source(a)}, s\in [\sv_{\target(a)}'] \backslash \Gamma_{\target(a)}} (w_{\target(a),s} - w_{\source(a),r}) \right)  \cdot \left(\prod_{ a \in \arrowset} \prod_{r\in \Gamma_{\source(a)}} \prod_{s = \sv_{\target(a)}' + 1}^{\sv_{\target(a)}}  (w_{\target(a),s} - w_{\source(a),r}) \right)
\end{align} 
and in the denominator, we similarly write:
\begin{equation}
  \label{eq:d:31}
\prod_{i \in I} \prod_{r \in \Gamma_i, s\in \Gamma_i}(w_{i,r} - w_{i,s}) =   
\left( \prod_{i \in I} \prod_{r \in \Gamma_i, s\in [\sv_i'] \backslash \Gamma_i}(w_{i,r} - w_{i,s}) \right) \cdot
\left( \prod_{i \in I} \prod_{r \in \Gamma_i} \prod_{s = \sv_i'+1}^{\sv_i}(w_{i,r} - w_{i,s}) \right)
\end{equation}
Finally, observing that
\begin{equation}
  \label{eq:d:32}
  \phi\left( \frac{\prod_{ a \in \arrowset} \prod_{r\in \Gamma_{\source(a)}} \prod_{s = \sv_{\target(a)}' + 1}^{\sv_{\target(a)}}  (w_{\target(a),s} - w_{\source(a),r})}{ \prod_{i \in I} \prod_{r \in \Gamma_i} \prod_{s = \sv_i'+1}^{\sv_i}(w_{i,r} - w_{i,s}) } u_{\Gamma} \right)  = u_{\Gamma}
\end{equation}
we see that \eqref{eq:d:28} is equal to 
\begin{equation}
  \label{eq:d:33}
\sum \left( \sum_{\substack{\Gamma = (\Gamma_i)_{i \in I} \\ \Gamma_i \subseteq [\sv'_i],  \# \Gamma_i = \sm_i}} f^{(1)}\vert_\Gamma \cdot  \frac{\prod_{ a \in \arrowset} \prod_{r\in \Gamma_{\source(a)}, s\notin \Gamma_{\target(a)}} (w_{\target(a),s} - w_{\source(a),r})}{\prod_{i \in I} \prod_{r \in \Gamma_i, s\notin \Gamma_i}(w_{i,r} - w_{i,s})}  \su_{\Gamma} \right) \cdot f^{(2)}
\end{equation}
which is exactly the rational expression for $\sum M_\bm^{+}(f^{(1)}) f^{(2)}$. 
\end{proof}

\subsection{Chevalley anti-involution on slices}
\label{sec: Chevalley}

In this section, we will recall an involution on the slice $\overline{\cW}^\lambda_\mu$ that swaps  positive and negative FMOs and allows us to finish the proof of Theorem \ref{thm:compatibility-of-FMOs-with-embeddings-of-slices} (by proving it for the negative FMOs). This involution and its non-commutative version for Yangians is studied in detailed in \cite[\S 2(vii) and Appendix B]{BFN2}, and the statements of this section can be obtained there. We present the key facts necessary for the present commutative situation.

As the group $G$ is equipped with a pinning, we have the Chevalley anti-involution $\iota: G \rightarrow G$ obtained by swapping positive and negative root subgroups. We get a corresponding anti-involution on the group $G((z^{-1}))$ that preserves the slice $\overline{\cW}^\lambda_\mu$. We denote this involution by the same symbol $\iota: \overline{\cW}^\lambda_\mu \rightarrow \overline{\cW}^\lambda_\mu$. We also write $\iota : \kk[\overline{\cW}^\lambda_\mu] \rightarrow \kk[\overline{\cW}^\lambda_\mu]$ for the corresponding involution of coordinate rings. Write $P_i^{+}(z) = P_i(z)$ and define the regular function $P_i^{-}(z)$ on $\overline{\cW}^\lambda_\mu$ by
\begin{equation}
  \label{eq:d:34}
  P_i^{-}(z) = z^{\langle \lambda,\fundweight_i\rangle}\langle v_{-\fundweight_i}^* | xaz^{\mu} y | v_{-s_i(\fundweight_i)} \rangle 
\end{equation}
for $xaz^{\mu} y \in \overline{\cW}^\lambda_\mu$. Then we have $\iota(Q_i(z)) = Q_i(z)$ and $\iota(P_i^{+}(z)) = P_i^{-}(z)$.  Because the $Q_i(z)$ and $P^{+}_i(z)$ are birational coordinates, this uniquely determines $\iota$.  In order to study how this involution interacts with the GKLO embeddings, we need to compute the image of $P_i^{-}(z)$ under the GKLO embedding.

\begin{Proposition}
  Under the GKLO embedding \eqref{eq:d:12},
  \begin{equation}
    \label{eq:d:35}
    P_i^{-}(z) \mapsto - (-1)^{\sum_{b\in E : \source(b) = i } \sv_{\target(b)}} \sum_{r = 1}^{\sv_i} \left( \prod_{\substack{s = 1 \\ s \neq r}}^{\sv_i}  \frac{z - w_{i,s}}{w_{i,r} - w_{i,s}} \right) w_{i,r}^{\sw_i} \prod_{a \in E : \target(a) = i} \prod_{s = 1}^{\sv_{\source(a)}} (w_{i,r} - w_{\source(a),s}) u_{i,r}^{-1} 
  \end{equation}
\end{Proposition}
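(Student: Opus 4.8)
The plan is to compute $P_i^-(z)$ as a matrix coefficient directly, using the definition \eqref{eq:d:34}, and to relate it to $P_i^+(z)$ via the Chevalley anti-involution $\iota$. Recall from the discussion in \S\ref{sec: Chevalley} that $\iota(P_i^+(z)) = P_i^-(z)$ and $\iota(Q_i(z)) = Q_i(z)$. The first step is to understand how $\iota$ acts on the ring $\tcA(\lambda-\mu)_\loc$ under the GKLO embedding \eqref{eq:d:12}. Since $\iota$ is the Chevalley anti-involution, it swaps $U^+$ and $U^-$, and on the torus part $T_1[[z^{-1}]]$ it acts by inversion; in GKLO coordinates this means roughly that $w_{i,r} \mapsto w_{i,r}$ (the $w$'s encode the $a$-factor $Q_i(z)$, which is $\iota$-invariant) while $\su_{i,r} \mapsto (\text{monomial in the } w\text{'s}) \cdot \su_{i,r}^{-1}$. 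So I would first pin down exactly this formula for $\iota$ on $\tcA(\lambda-\mu)_\loc$ — this is essentially the commutative ($\hbar = 0$) shadow of \cite[\S 2(vii), Appendix B]{BFN2}.

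Once $\iota$ is understood on the localized polynomial ring, the proof is a direct computation: apply $\iota$ to the right-hand side of \eqref{eq:d:15}, the known image of $P_i^+(z)$. The Lagrange-interpolation prefactor $\prod_{s \neq r}(z-w_{i,s})/(w_{i,r}-w_{i,s})$ is built from $w$'s alone, hence $\iota$-fixed. Under $\iota$, the product $\prod_{a : \source(a) = i}\prod_t (w_{\target(a),t} - w_{i,r})$ gets replaced, after accounting for the inversion of $\su_{i,r}$, by a product over arrows \emph{into} $i$, namely $\prod_{a : \target(a)=i}\prod_s (w_{i,r} - w_{\source(a),s})$, together with the torus-inversion monomial $w_{i,r}^{\sw_i}$ coming from the $z^\mu$-to-$z^\lambda$ shift (the exponent $\sw_i = \langle\lambda,\simpleroot_i\rangle$ is exactly the degree discrepancy recorded in \eqref{eq:d:104b}), and the overall sign $-(-1)^{\sum_{b : \source(b)=i}\sv_{\target(b)}}$ coming from the sign normalization in Remark \ref{remark: GKLO vs BFN} together with a sign from reversing the order of a difference of $w$'s. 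Matching all of these against the claimed formula \eqref{eq:d:35} finishes the proof. As a sanity check, one can verify independently that the claimed expression is a polynomial in $z$ of degree $\leq \sv_i - 1$ and compute its value as a matrix coefficient of $z^\lambda$, consistent with \eqref{eq:d:34}.

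The main obstacle I anticipate is bookkeeping the signs and the torus-inversion monomial correctly: the exponent $\sw_i$ and the sign $(-1)^{\sum_{b : \source(b)=i}\sv_{\target(b)}}$ both arise from somewhat delicate normalization choices (the sign twist of Remark \ref{remark: GKLO vs BFN}, the pinning-dependent choice of $|v_{-s_i(\fundweight_i)}\rangle$, and the convention for $\iota$ on $T_1[[z^{-1}]]$). Rather than deriving $\iota$ abstractly, it may be cleaner to argue that \emph{any} anti-involution of $\tcA(\lambda-\mu)_\loc^{S_\bv}$ fixing all $Q_i(z)$ and sending $P_i^+(z)$ to the right-hand side of \eqref{eq:d:35} must coincide with $\iota$, since the $Q_i$ and $P_i^+$ are birational coordinates; then one only needs to check that the prescribed assignment $w_{i,r}\mapsto w_{i,r}$, $\su_{i,r}\mapsto (\text{monomial})\,\su_{i,r}^{-1}$ is a well-defined anti-involution and that \eqref{eq:d:34} forces its value on $P_i^+(z)$ to be \eqref{eq:d:35}. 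Either way, the computation is routine once the formula for $\iota$ on the localized ring is correctly set up; the representation-theoretic input is just that $P_i^-(z)$, like $P_i^+(z)$ and $Q_i(z)$, is a genuine matrix coefficient and hence regular, which also re-confirms Theorem \ref{thm: FMOs are regular} in this case.
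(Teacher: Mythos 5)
There is a genuine gap. Your plan is to apply $\iota$ to the known GKLO image \eqref{eq:d:15} of $P_i^+(z)$, but to do that you need the explicit formula for $\iota(\su_{i,r})$ in the localized GKLO ring, and that formula is logically equivalent to the statement you are trying to prove: in the paper, the involution on $\tcA(\lambda-\mu)_{\loc,\loc}$ is \emph{defined} by formula \eqref{eq:d:43}, i.e.\ in terms of the value $P_i^-(w_{i,r})$ that this Proposition computes, and only afterwards is Proposition \ref{prop:involution-and-GKLO} stated. Knowing abstractly that $\iota$ fixes the $w_{i,r}$ and sends $\su_{i,r}$ to a monomial in the $w$'s times $\su_{i,r}^{-1}$ does not determine that monomial (the exponent $\sw_i$, the product over incoming arrows) nor the sign, and those are the entire content of \eqref{eq:d:35}. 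Your fallback argument (``any anti-involution fixing the $Q_i(z)$ and sending $P_i^+(z)$ to the right-hand side of \eqref{eq:d:35} must be $\iota$'') presupposes exactly what is to be shown, namely that the right-hand side of \eqref{eq:d:35} is where $\iota$ sends $P_i^+(z)$. The only non-circular completion of your route is to import the $\hbar=0$ limit of \cite[Theorem B.15]{BFN2} and match conventions via Remark \ref{remark: GKLO vs BFN}; that reduces the Proposition to a citation, and you do not carry out the sign comparison, which you yourself identify as the delicate point. Also, your ``sanity check'' is vacuous: the right-hand side of \eqref{eq:d:35} is manifestly a polynomial of degree at most $\sv_i-1$ in $z$ because it is written in Lagrange form.

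The missing ingredient, and the paper's actual mechanism, is a representation-theoretic identity among matrix coefficients. Since $P_i^-(z)$ has degree at most $\sv_i-1$ and \eqref{eq:d:35} is in Lagrange form, it suffices to compute $P_i^-(w_{i,r})$ for $r\in[\sv_i]$. Introducing $D_i(z)=z^{\langle\lambda,\fundweight_i\rangle}\langle v^*_{-s_i(\fundweight_i)}|xaz^\mu y|v_{-s_i(\fundweight_i)}\rangle$, one has the generalized-minor (Fomin--Zelevinsky/GKLO) identity \eqref{eq:d:38}, $D_i(z)Q_i(z)=P_i^+(z)P_i^-(z)+z^{\langle\lambda,\simpleroot_i\rangle}\prod_{a:\target(a)=i}Q_{\source(a)}(z)\prod_{b:\source(b)=i}Q_{\target(b)}(z)$. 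Evaluating at $z=w_{i,r}$ kills the left side since $Q_i(w_{i,r})=0$, and dividing by the GKLO image of $P_i^+(w_{i,r})$ from \eqref{eq:d:15} produces exactly the factor $w_{i,r}^{\sw_i}$, the product over arrows into $i$, the $\su_{i,r}^{-1}$, and the sign $-(-1)^{\sum_{b:\source(b)=i}\sv_{\target(b)}}$ (the latter from rewriting $\prod_{b:\source(b)=i}Q_{\target(b)}(w_{i,r})=\prod_{b}\prod_t(w_{i,r}-w_{\target(b),t})$ in the orientation used in \eqref{eq:d:15}). If you want to salvage your approach, you should either reproduce this identity-based computation of $P_i^-(w_{i,r})$ or make the comparison with \cite{BFN2} explicit; as written, the key step is assumed rather than proved.
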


As in Remark \ref{remark: GKLO vs BFN}, this result agrees with the $\hbar = 0$ limit of \cite[Theorem B.15]{BFN2}, up to the corresponding sign.

\begin{proof}
  Observe that \eqref{eq:d:35} is in the form of the Lagrange interpolation formula, so it suffices to compute $P_i^{-}(w_{i,r})$ for all $r \in [\sv_i]$. 
  Define $D_i(z)$ to be the regular function on $\overline{\cW}^\lambda_\mu$ given by
  \begin{equation}
    \label{eq:d:37}
   D_i(z) =  z^{\langle \lambda,\fundweight_i\rangle}\langle v_{-s_i(\fundweight_i)}^* | xaz^{\mu} y | v_{-s_i(\fundweight_i)} \rangle 
 \end{equation}
 for $x a z^{\mu} y \in \overline{\cW}^\lambda_\mu$.
Then we have the following identity (see e.g.~\cite[Theorem 1.17]{FZ}, \cite[(2.19)]{GKLO}):
\begin{equation}
  \label{eq:d:38}
  D_i(z) Q_i(z) = P_i^{+}(z) P_i^{-}(z) + z^{\langle \lambda, \simpleroot_i \rangle} \prod_{a \in E : \target(a) = i} Q_{\source(a)}(z) \prod_{b \in E : \source(a) = i} Q_{\target(a)}(z)
\end{equation}
For each $r \in [\sv_i]$, we substitute $z = w_{i,r}$ and solve for $P_i^{-}(w_{i,r})$, noting that $Q_i(w_{i,r}) = 0$.

\end{proof}

Let $\tcA(\lambda - \mu)_{\loc,\loc}$ denote the further localization of $\tcA(\lambda - \mu)_{\loc}$ where we invert $w_{i,r}$ for all $i \in I$, $r \in [\sv_i]$. Then we can define an automorphism
\begin{equation}
  \label{eq:d:41}
 \iota: \tcA(\lambda - \mu)_{\loc,\loc} \overset{\sim}{\rightarrow}  \tcA(\lambda - \mu)_{\loc,\loc}
\end{equation}
by
\begin{equation}
  \label{eq:d:42}
  w_{i,r} \mapsto w_{i,r}
\end{equation}
and: 
\begin{align}
  \label{eq:d:43}
  \su_{i,r} & \ \mapsto \ \frac{P_i^{-}(w_{i,r})}{(-1)^{\sum_{b\in E : \source(b) = i } \sv_{\target(b)}}\prod_{b \in E : \source(b) = i} Q_{\target(b)}(w_{i,r})}  \\
& \ =  - (-1)^{\sum_{b\in E : \source(b) = i } \sv_{\target(b)}}w_{i,r}^{\sw_i} \frac{
 \prod_{a \in E : \target(a) = i} \prod_{s = 1}^{\sv_{\source(a)}} (w_{i,r} - w_{\source(a),s})      
  }{ \prod_{b \in E : \source(b) = i} \prod_{t = 1}^{\sv_{\target(b)}} (w_{\target(b),s} - w_{i,r} )} \su_{i,r}^{-1}
\end{align}
This automorphism is involutive and it has been constructed precisely to make the following proposition hold.

\begin{Proposition}
  \label{prop:involution-and-GKLO}
  The following square commutes.
  \begin{equation}
    \label{eq:d:44}
\begin{tikzcd}
\kk[\overline{\cW}^\lambda_\mu]   \arrow[d,"\iota"] \arrow[r, hook] & \tcA(\lambda-\mu)_{\loc,\loc}  \arrow[d,"\iota"] \\
\kk[\overline{\cW}^\lambda_\mu]   \arrow[r,hook]                        & \tcA(\lambda-\mu)_{\loc,\loc}
\end{tikzcd}
  \end{equation}
\end{Proposition}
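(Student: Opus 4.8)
The plan is to verify the commutativity of \eqref{eq:d:44} by checking it on the birational coordinates $Q_i(z)$ and $P_i^+(z)$, which generate $\kk[\overline{\cW}^\lambda_\mu]$ as a field (indeed, they suffice to pin down any map out of $\kk[\overline{\cW}^\lambda_\mu]$ into a domain since they are birational coordinates). So it is enough to chase each of these around the square and confirm the two outputs agree in $\tcA(\lambda-\mu)_{\loc,\loc}$.

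First I would handle $Q_i(z)$. Going down then across: $\iota(Q_i(z)) = Q_i(z)$ by the defining property of $\iota$ on $\kk[\overline{\cW}^\lambda_\mu]$, and then the GKLO embedding sends this to $\prod_{r=1}^{\sv_i}(z - w_{i,r})$ by \eqref{eq:d:13}. Going across then down: $Q_i(z) \mapsto \prod_{r=1}^{\sv_i}(z-w_{i,r})$, and then $\iota$ on $\tcA(\lambda-\mu)_{\loc,\loc}$ fixes each $w_{i,r}$ by \eqref{eq:d:42}, so this polynomial is fixed. The two agree. Next, for $P_i^+(z)$: going down-then-across gives $\iota(P_i^+(z)) = P_i^-(z)$, which maps under GKLO to the expression \eqref{eq:d:35} computed in the preceding Proposition. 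Going across-then-down sends $P_i^+(z)$ to the expression \eqref{eq:d:15}, and then I must apply the ring automorphism $\iota$ of \eqref{eq:d:41}--\eqref{eq:d:43}: since $\iota$ fixes all $w_{i,r}$, it acts on \eqref{eq:d:15} only through the substitution $\su_{i,r} \mapsto (\text{the expression in \eqref{eq:d:43}})$. Substituting \eqref{eq:d:43} into \eqref{eq:d:15} and simplifying — the $\prod_{a : \source(a) = i}\prod_t (w_{\target(a),t} - w_{i,r})$ factor in \eqref{eq:d:15} cancels against the denominator $\prod_{b : \source(b)=i}\prod_t (w_{\target(b),t} - w_{i,r})$ in \eqref{eq:d:43} — should produce exactly \eqref{eq:d:35}. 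I would present this cancellation as the one genuine computation, but note it is routine: both expressions are Lagrange-interpolation sums with the same nodes $w_{i,r}$, so it suffices to compare the coefficient of $\prod_{s \neq r}\frac{z - w_{i,s}}{w_{i,r}-w_{i,s}}$, i.e. the value at $z = w_{i,r}$, for each $r$, which reduces to matching the $\su_{i,r}^{-1}$ coefficients — a direct check against \eqref{eq:d:43}.

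An alternative, cleaner route is to avoid the bare formula entirely: the automorphism \eqref{eq:d:41} was \emph{defined} via \eqref{eq:d:43} precisely so that $\su_{i,r} \mapsto P_i^-(w_{i,r})/\big((-1)^{(\cdots)}\prod_{b:\source(b)=i} Q_{\target(b)}(w_{i,r})\big)$, and one checks directly that applying this substitution to the GKLO image \eqref{eq:d:15} of $P_i^+(z)$ — which, evaluated at $z = w_{i,r}$, is exactly $(-1)^{(\cdots)}\prod_{b:\source(b)=i} Q_{\target(b)}(w_{i,r})\,\su_{i,r}$ after using \eqref{eq:d:13} — yields $P_i^-(w_{i,r})$, hence \eqref{eq:d:35} by Lagrange interpolation. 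This makes the commutativity on $P_i^+(z)$ almost definitional. I would also note for completeness that $\iota$ genuinely preserves $\kk[\overline{\cW}^\lambda_\mu]$ inside $\tcA(\lambda-\mu)_{\loc,\loc}$: the image of $\kk[\overline{\cW}^\lambda_\mu]$ is generated by the $Q_i(z)$ and $P_i^+(z)$, and we have just shown $\iota$ sends these to $Q_i(z)$ and (the GKLO image of) $P_i^-(z)$, both of which lie in the subring $\kk[\overline{\cW}^\lambda_\mu]$ since $P_i^-(z)$ is a regular function on the slice.

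The main obstacle is purely bookkeeping: tracking the sign $(-1)^{\sum_{b : \source(b)=i}\sv_{\target(b)}}$ and the orientation-dependent products $\prod_{a : \target(a) = i}$ versus $\prod_{b : \source(b) = i}$ through the substitution, and confirming that the factor $\prod_{a : \source(a) = i}\prod_{t=1}^{\sv_{\target(a)}}(w_{\target(a),t} - w_{i,r})$ appearing in the GKLO image of $P_i^+(z)$ cancels cleanly against the denominator appearing in \eqref{eq:d:43} so that what survives is the numerator $\prod_{a : \target(a) = i}\prod_{s}(w_{i,r} - w_{\source(a),s})$ of \eqref{eq:d:35}. There is no conceptual difficulty — everything reduces to the identity \eqref{eq:d:38} already invoked to compute $P_i^-$ — so I would keep the write-up short, reducing to the claim that the square commutes on $Q_i(z)$ (immediate) and on $P_i^+(z)$ (which is the content of the displayed formula for $\iota(\su_{i,r})$ in \eqref{eq:d:43} together with \eqref{eq:d:35}), and referencing \cite[Appendix B]{BFN2} for the parallel $\hbar \neq 0$ computation.
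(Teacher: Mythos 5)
Your proposal is correct and is essentially the paper's own argument: the paper gives no separate proof because the automorphism $\iota$ of $\tcA(\lambda-\mu)_{\loc,\loc}$ in \eqref{eq:d:41}--\eqref{eq:d:43} is \emph{defined} so that the square commutes on the birational coordinates $Q_i(z)$ and $P_i^{+}(z)$, using the GKLO image \eqref{eq:d:35} of $P_i^{-}(z)$ computed from the identity \eqref{eq:d:38}, exactly as you check. Your Lagrange-interpolation/evaluation-at-$z=w_{i,r}$ cancellation and the remark that agreement on birational coordinates suffices (both composites being homomorphisms from a domain into the domain $\tcA(\lambda-\mu)_{\loc,\loc}$) just make explicit what the paper leaves implicit.
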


\subsubsection{Negative FMOs}

With Proposition \ref{prop:involution-and-GKLO} in hand, we can directly calculate the following
\begin{Proposition}
  Fix $\bm = (\sm_i)_{i \in I}$ with $\mathbf{0} \leq \bm \leq \bv$ and $f \in \Lambda^\bv_\bm$. Under the involution $\iota:    \kk[\overline{\cW}^\lambda_\mu] \rightarrow \kk[\overline{\cW}^\lambda_\mu]$, we have
  \begin{align}
    \label{eq:d:45}
    \iota \left( M_\bm^{+}(f) \right) = M_\bm^{-}(f)
  \end{align}
\end{Proposition}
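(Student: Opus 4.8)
The plan is to compute $\iota\big(\M^+_\bm(f)\big)$ directly via the commutative square of Proposition \ref{prop:involution-and-GKLO}, by applying the ring automorphism $\iota$ of $\tcA(\lambda-\mu)_{\loc,\loc}$ from \eqref{eq:d:41}--\eqref{eq:d:43} to the explicit rational expression \eqref{eq:positive-FMOs-images-under-GKLO} for $\M^+_\bm(f)$. Since $\iota$ fixes every $w_{i,r}$, it fixes each $f\vert_\Gamma$ as well as the ratio of products of differences $(w_{\target(a),s}-w_{\source(a),r})$ and $(w_{i,r}-w_{i,s})$. So the only thing that moves is the monomial $\su_\Gamma = \prod_{i\in I}\prod_{r\in\Gamma_i}\su_{i,r}$, which is sent to $\prod_{i\in I}\prod_{r\in\Gamma_i}\iota(\su_{i,r})$. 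Substituting the formula for $\iota(\su_{i,r})$ turns $\su_\Gamma$ into $\su_\Gamma^{-1}$ times a product over $i\in I$ and $r\in\Gamma_i$ of: a sign, a power $w_{i,r}^{\sw_i}$, a numerator $\prod_{a:\,\target(a)=i}\prod_{s=1}^{\sv_{\source(a)}}(w_{i,r}-w_{\source(a),s})$, and a denominator $\prod_{b:\,\source(b)=i}\prod_{t=1}^{\sv_{\target(b)}}(w_{\target(b),t}-w_{i,r})$.

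First I would collect the sign: the product over $r\in\Gamma_i$ of $-(-1)^{\sum_{b:\,\source(b)=i}\sv_{\target(b)}}$ contributes $(-1)^{\sm_i}(-1)^{\sm_i\sum_{b:\,\source(b)=i}\sv_{\target(b)}}$, and multiplying over $i\in I$ gives exactly $(-1)^{\sum_i \sm_i}\cdot(-1)^{\sum_{a\in E}\sm_{\source(a)}\sv_{\target(a)}}$. Comparing with \eqref{eq:d:48}, the discrepancy between this and $(-1)^{\text{sign}}$ is $(-1)^{\sum_i\sm_i\sv_i + \sum_i\sm_i}$, which I would need to reconcile against the change of variables $\su_{i,r}\mapsto\su_{i,r}^{-1}$ together with re-expressing denominators like $(w_{i,s}-w_{i,r})$ versus $(w_{i,r}-w_{i,s})$ in \eqref{eq:negative-FMOs-images-under-GKLO}: each swap of $s\notin\Gamma_i$ with $r\in\Gamma_i$ in the denominator of a single $\Gamma$-term flips the sign $\sm_i(\sv_i-\sm_i)$ times, and one checks this is precisely $\sm_i\sv_i$ mod $2$ after accounting for the $\su^{-1}$ normalization. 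The second bookkeeping step is to match the factors of differences: the numerator $\prod_a\prod_{r\in\Gamma_{\source(a)},s\notin\Gamma_{\target(a)}}(w_{\target(a),s}-w_{\source(a),r})$ in $\M^+_\bm(f)$, after multiplication by $\prod_{i}\prod_{r\in\Gamma_i}\Big(\text{num}/\text{denom of }\iota(\su_{i,r})\Big)$, recombines: the $r\in\Gamma_i$, $s\in\Gamma_i$ (for $s\notin\Gamma_{\target(a)}$ one splits $s\in\Gamma_{\target(a)}$ versus $s\notin\Gamma_{\target(a)}$) pieces telescope into the shape $\prod_a\prod_{r\in\Gamma_{\target(a)},s\notin\Gamma_{\source(a)}}(w_{\target(a),r}-w_{\source(a),s})$ appearing in \eqref{eq:negative-FMOs-images-under-GKLO}, and the leftover monopole power $\prod_{j\in I}\prod_{t\in\Gamma_j}w_{j,t}^{\sw_j}$ is exactly what the $w_{i,r}^{\sw_i}$ factors produce.

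Finally, having shown term-by-term over $\Gamma$ that $\iota$ applied to the $\Gamma$-summand of \eqref{eq:positive-FMOs-images-under-GKLO} equals the $\Gamma$-summand of \eqref{eq:negative-FMOs-images-under-GKLO}, I would conclude $\iota(\M^+_\bm(f)) = \M^-_\bm(f)$ inside $\tcA(\lambda-\mu)_{\loc,\loc}$, hence inside $\kk[\overline{\cW}^\lambda_\mu]$ by Proposition \ref{prop:involution-and-GKLO} and the fact that $\M^\pm_\bm(f)$ are regular (Theorem \ref{thm: FMOs are regular}). The main obstacle I anticipate is purely combinatorial sign-chasing: reconciling the three sources of signs --- the explicit $(-1)^{\text{sign}}$ in \eqref{eq:negative-FMOs-images-under-GKLO}, the product of the $-1$'s and $(-1)^{\sum_b\sv_{\target(b)}}$'s coming from $\iota(\su_{i,r})$, and the sign flips from rewriting each difference $(w_{i,r}-w_{i,s})$ as $-(w_{i,s}-w_{i,r})$ when passing between the two normal forms --- so that everything cancels to give precisely the stated formula. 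This is the kind of step best done by fixing a single $i$ (or even $G=SL_2$) first to pin down the parity count, then reading off the general case; I would also double-check the edge orientation conventions, since the numerator product runs over $\target(a)=i$ in $\iota(\su_{i,r})$ but over all $a\in\arrowset$ with a source/target split in the FMO formulas.
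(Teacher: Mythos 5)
Your proposal is correct and is essentially the paper's own argument: the paper proves this proposition by exactly the direct calculation you describe, applying the involution $\iota$ of $\tcA(\lambda-\mu)_{\loc,\loc}$ (via Proposition \ref{prop:involution-and-GKLO}) to the rational expression \eqref{eq:positive-FMOs-images-under-GKLO} term-by-term over $\Gamma$ and matching against \eqref{eq:negative-FMOs-images-under-GKLO}. The sign bookkeeping you worry about does close: the $-1$'s and $(-1)^{\sum_{b:\source(b)=i}\sv_{\target(b)}}$'s from $\iota(\su_{i,r})$ contribute $(-1)^{\sum_i\sm_i+\sum_a\sm_{\source(a)}\sv_{\target(a)}}$, and since flipping the denominator gives $(-1)^{\sum_i\sm_i(\sv_i-\sm_i)}\equiv(-1)^{\sum_i\sm_i\sv_i+\sm_i}$ mod $2$, this combines with \eqref{eq:d:48} to cancel exactly as required.
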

As the involution $\iota$ is compatible with closed embeddings \eqref{eq:d:10}, we obtain Theorem \ref{thm:compatibility-of-FMOs-with-embeddings-of-slices} for negative FMOs and therefore complete its proof.

\subsection{The other zastava space}

Let $\gamma \in \corootlattice_+$. Define $\bbzastavam^\gamma$ to be the following closed subscheme of $G((z^{-1}))$:
\begin{align}
  \label{eq:d:9}
\bbzastavam^\gamma = \left\{ a z^{-\gamma}y \in  T_1[[z^{-1}]] z^{-\gamma} U_1^-[[z^{-1}]] \suchthat   a z^{-\gamma}y |v_{-\Lambda}\rangle \geq 0 \text{ for all } \Lambda \right\}
\end{align}
where $\Lambda \in \weightlattice_{++}$ varies over all dominant weights and $|v_{-\Lambda}\rangle \in V(-\Lambda)$ is the lowest weight vector defined above, and $V(-\Lambda)$ varies over $W(-\Lambda)$ and $S(-\Lambda)$. As before, define $\zastavam^{\gamma}$ to be the reduced scheme of $\bbzastavam^\gamma$. There is a map $\overline{\cW}_\mu^\lambda \rightarrow \zastavam^{\lambda-\mu}$ given by $x a z^\mu y \mapsto a z^{\mu-\lambda} y$.

The involution $\iota$ induces an isomorphism:
\begin{align}
  \label{eq:d:46}
  \iota : \zastavap^\gamma \overset{\sim}{\rightarrow} \zastavam^{\gamma}
\end{align}
Applying $\iota$, all of the discussion above about $\zastavap^\gamma$ holds for $\zastavam^{\gamma}$ as well. 

Furthermore, we obtain a closed embedding:
\begin{align}
  \label{eq:d:47}
  \overline{\cW}^\lambda_\mu \hookrightarrow  \zastavap^\gamma \times_{\AA^{(\lambda - \mu)}} \zastavam^{\gamma}
\end{align}
which is given explicitly on points by:
\begin{equation}
x a z^\mu y \ \mapsto \ (x a z^{\mu-\lambda}, a z^{\mu-\lambda} y)
\end{equation}
We can interpret this embedding in terms of FMOs: the positive FMOs $\M_\bm^+(f)$ generate the coordinate ring of $\zastavap^\gamma$, the negative FMOs $\M_\bm^-(f)$ generate the coordinate ring of $\zastavam^{\gamma}$, and the map on coordinate rings corresponding to \eqref{eq:d:47} is surjective because the coordinate ring of $\overline{\cW}^\lambda_\mu$ is generated by all the FMOs $\M_\bm^\pm(f)$ together.

%%% Local Variables:
%%% mode: latex
%%% TeX-master: "main"
%%% End:
\section{Kac-Moody affine Grassmannian slices via Coulomb branches}
\label{section: KM slices via Coulomb}

In this section we explain how the Braverman-Finkelberg-Nakajima theory of Coulomb branches gives a definition of affine Grassmannian slices for all \emph{symmetric} Kac-Moody types. First we will discuss Coulomb branches in general. We then focus on the Coulomb branches of quiver gauge theories, which are those which produce Kac-Moody affine Grassmannian slices.

\subsection{Coulomb branches in general} 
\label{subsection: Coulomb branches in general}

We first recall the general definition of Coulomb branches due to Braverman, Finkelberg and Nakajima \cite{BFN1}. Let $\bG$ be a split reductive group, and let $\bN$ be a finite dimensional representation of $\bG$, both defined over $\CC$. (Note that $\bG$ will not be the same as the group $G$ from the previous section. For us $\bG$ will always be a product of general linear groups.)

We will refer to \cite[\S 3, Definition 3.13]{BFN1} for the precise definitions and here only recall the basic ingredients in the definition. Let $\cK = \CC ((t))$ and $\cO = \CC[[t]]$, and define the ind-scheme $\cR_{\bG,\bN}$ (see \cite[\S 2(i)]{BFN1}) by:
\begin{equation}
  \label{eq:d:50}
  \cR_{\bG,\bN} = \Big\{ (g,n) \in \bG(\cK)/\bG(\cO) \times \bN(\cO) \suchthat g n \in \bN(\cO) \Big\}
\end{equation}
Then define
\begin{equation}
  \label{eq:d:51}
  \cA(\bG, \bN) = H_\bullet^{\bG(\cO)} (\cR_{\bG,\bN}) 
\end{equation}
where $H_\bullet^{\bG(\cO)}$ denotes equivariant Borel-Moore homology (with coefficients in $\kk$). Note that $\cR_{\bG,\bN}$ is not an inductive limit of finite type schemes unless $\bN = 0$, so the definition of its equivariant Borel-Moore homology requires some care, see \cite[\S 2(ii)]{BFN1}.

Observe that there is a natural map $\cR_{\bG,\bN} \rightarrow \bG(\cK) / \bG(\cO)$ to the affine Grassmannian for $\bG$. Using a variation of the convolution structure on this affine Grassmannian, Braverman, Finkelberg, and Nakajima define a multiplication on $H_\bullet^{\bG(\cO)} (\cR_{\bG,\bN}) $. By formal properties of convolution algebras, this multiplication is associative \cite[Theorem 3.10]{BFN1}. However, a special feature of this setting is that the multiplication is in fact commutative  (\cite[Proposition 5.15]{BFN1}).  One can thus define the Coulomb branch $\cM_C(\bG,\bN)$ as
\begin{equation}
  \label{eq:d:49}
\cM_C(\bG,\bN) = \Spec \cA (\bG,\bN) 
\end{equation}
Furthermore, Braverman, Finkelberg and Nakajima prove that $\cA (\bG,\bN)$ is a finitely-generated $\kk$-domain, which tells us that $\cM_C(\bG,\bN)$ is an irreducible $\kk$-variety \cite[Corollary 5.22 and Proposition 6.8]{BFN1}.

%%%%%
\subsubsection{Equivariant cohomology of a point}

An important property of Coulomb branches is that there is a map of algebras \cite[\S 3(vi)]{BFN1}:
\begin{equation}
\label{eq: eqcohpt}
H_\bG^\bullet(pt)  \ \hooklongrightarrow \ \cA(\bG,\bN)
\end{equation}
Moreover, multiplication by this subalgebra agrees with the natural geometric action of $ H_\bG^\bullet(pt) = H_{\bG(\cO)}^\bullet(pt)$ on the equivariant Borel-Moore homology $  \cA(\bG, \bN) = H_\bullet^{\bG(\cO)} (\cR_{\bG,\bN}) $. 

%%%%%
\subsubsection{Localization}
\label{sec: localization}

Fix a pair of opposite Borel subgroups of $\bG$, which determines a maximal torus $\bT$ and a set $\Delta_{\bG}^{+}$ of positive roots. Then using the localization theorem in equivariant Borel-Moore homology, we have an embedding \cite[Lemma 5.10, Remark 5.23]{BFN1}  
\begin{equation}
\label{eq: localization}
\iota_\ast^{-1}: \cA(\bG,\bN) \ \hookrightarrow \ \cA(\bT,\bN)\left[  \beta^{-1}\right]_{\beta \in \Delta_{\bg}^+}
\end{equation}
where we view the roots $\beta \in \Delta_{\bG}^+$ as elements of $H^\bullet_\bT(pt)$. In our main case of interest, the quiver gauge theories to be recalled below, we will give a precise formula for this embedding in terms of the GKLO embedding (see \S \ref{sec:beyond-finite-gklo-embedding} below).

The algebra $\cA(\bT,\bN)$ is relatively simple to describe: it is free as a module over its subalgebra $H^\bullet_{\bT}(pt)$, with a basis $\{r_\gamma\}_\gamma$ indexed by the set of all coweights $\gamma:\mathbb{G}_m \rightarrow \bT$.  Moreover there is an explicit multiplication formula for these elements \cite[Theorem 4.1]{BFN1}.

%%%%%
\subsubsection{Forgetting matter}
Suppose that $\bNone$ and $\bNtwo$ are two $\bG$-representations. Then we have the \emph{forgetting matter} homomorphism \cite[Remark 5.14]{BFN1}:
\begin{equation}
  \label{eq:d:65}
  \bz^{*} : \cA(\bG, \bNone \oplus \bNtwo) \hookrightarrow \cA(\bG, \bNone) 
\end{equation}

To define this map, first consider the case where $\bG = \bT$ is a torus.  In this case, the map $\bz^\ast : \cA(\bT, \bNone \oplus \bNtwo) \hookrightarrow \cA(\bT, \bNone)$ is $H^\bullet_{\bT}(\pt)$-linear and for all cocharacters $\gamma: \GG_m \rightarrow \bT$ we have
\begin{equation}
  \label{eq:d:67}
  \bz^{*}(r_\gamma) = \prod_{\xi : \langle \xi, \gamma \rangle < 0 } \xi^{- \langle \xi, \gamma \rangle  \dim \bN_2(\xi) } r_{\gamma}
\end{equation}
Here the product runs over weights $\xi$ of $\bN$, thought of as elements of $H_\bT^\bullet(pt)$ \cite[\S 4(vi)]{BFN1}.

For general $\bG$, let $\bT\subset \bG$ be a maximal torus.  Then the map (\ref{eq:d:65}) is determined by the following commutative square involving the respective localization maps:
\begin{equation}
  \label{eq:d:66}
  \begin{tikzcd}
\cA(\bG,\bNone \oplus \bNtwo) \ar[r,hook,"\bz^\ast"] \ar[d,hook,"\iota_\ast^{-1}"] & \cA(\bG,\bNone) \ar[d,hook,"\iota_\ast^{-1}"] \\
\cA(\bT,\bNone\oplus \bNtwo)\left[  \beta^{-1}\right]_{\beta \in \Delta_{\bG}^+} \ar[r,hook,"\bz^\ast"] & \cA(\bT,\bNone)\left[ \beta^{-1}\right]_{\beta \in \Delta_{\bG}^+}
  \end{tikzcd}
\end{equation}
In particular, the map (\ref{eq:d:65}) is linear over $H_\bG^\bullet(pt)$.

%%%%%
\subsubsection{Fourier transform}
\label{subsec: FT}

The \emph{Fourier transform} is an isomorphism \cite[\S 6(viii)]{BFN1}:
\begin{equation}
  \label{eq:d:68}
  \cF : \cA(\bG,\bNone \oplus \bNtwo) \ \stackrel{\sim}{\longrightarrow} \ \cA(\bG, \bNone \oplus\bNtwo^\ast)
\end{equation}
This map is again compatible with localization:
\begin{equation}
\label{eq: Fourier via loc}
\begin{tikzcd}
\cA(\bG,\bNone \oplus \bNtwo) \ar[r,"\cF","\sim"'] \ar[d,hook,"\iota_\ast^{-1}"] & \cA(\bG,\bNone \oplus \bNtwo^\ast) \ar[d,hook,"\iota_\ast^{-1}"] \\
\cA(\bT,\bNone\oplus \bNtwo)\left[  \beta^{-1}\right]_{\beta \in \Delta_{\bG}^+} \ar[r,"\cF","\sim"'] & \cA(\bT,\bNone\oplus \bNtwo^\ast)\left[ \beta^{-1}\right]_{\beta \in \Delta_{\bG}^+}
\end{tikzcd}
\end{equation}
The bottom arrow is determined by $\cF:\cA(\bT, \bNone \oplus \bNtwo) \stackrel{\sim}{\rightarrow} \cA(\bT,\bNone \oplus \bNtwo^\ast)$. The latter map is  $H_{\bT}^\bullet(\pt)$--linear, and  on monopole operators is given by a sign \cite[\S 4(v)]{BFN1}:
\begin{equation}
  \label{eq:Fourier sign}
\cF(r_\gamma )\ = \   (-1)^{\sum_{\xi  : \langle \xi, \gamma \rangle > 0 } \langle \xi, \gamma \rangle \dim \bNtwo(\xi)} r_\gamma,
\end{equation}
In particular, the map (\ref{eq:d:68}) is $H_{\bG}^\bullet(pt)$--linear.

%%%%%
\subsubsection{Minuscule monopole operators}
\newcommand{\Stab}{\mathrm{Stab}}
\label{subsection: Minuscule monopole operator}

Let $\gamma : \GG_m \rightarrow \bT$ be a cocharacter. Consider the orbit $\bG(\cO) t^\lambda \bG(\cO) / \bG(\cO) \hookrightarrow \bG(\cK)/\bG(\cO)$. We say $\gamma$ is minuscule precisely when this orbit is closed; this is equivalant to asking that the dominant translate of $\gamma$ under the Weyl group $W_{\bG}$ be minuscule in the usual sense. Let $\cR^\gamma_{\bG,\bN}$ denote the preimage of the orbit under the map $\cR_{\bG,\bN} \rightarrow \bG(\cK) / \bG(\cO)$. Because of our assumption that $\gamma$ is minuscule,
\begin{equation}
  \label{eq:d:61}
  \cR^\gamma_{\bG,\bN} \rightarrow \bG(\cO) t^\lambda \bG(\cO) / \bG(\cO)
\end{equation}
is a vector bundle (of infinite rank), and we have a closed embedding  
\begin{equation}
  \label{eq:d:59}
 \cR^\gamma_{\bG,\bN} \hookrightarrow \cR_{\bG,\bN}
\end{equation}
which induces an embedding:
\begin{equation}
  \label{eq:d:60}
  H^{\bG(\cO)}_\bullet(\cR^\gamma_{\bG,\bN}) \hookrightarrow  H^{\bG(\cO)}_\bullet(\cR_{\bG,\bN})
\end{equation}
Let $\Stab_{t^{\gamma}} \subset \bG$ denote the stabilizer of $t^\lambda \bG(\cO) / \bG(\cO)$. There is an action of $H^\bullet_{\Stab_{t^{\gamma}}}(\pt)$ on $H^{\bG(\cO)}_\bullet(\cR^\gamma_{\bG,\bN})$ which is a rank-one free module with generator given by the fundamental class $[\cR^\gamma_{\bG,\bN}]$, see \cite[\S 6(ii)]{BFN1}. For $f \in H^\bullet_{\Stab_{t^{\gamma}}}(\pt)$, denote the corresponding element of $H^{\bG(\cO)}_\bullet(\cR^\gamma_{\bG,\bN})$ by
\begin{equation} 
M_\gamma^{\bG,\bN}(f) = f [\cR^\gamma_{\bG,\bN}],
\end{equation}
called the \emph{(dressed) minuscule monopole operator} (MMO) with dressing $f$.

Applying the localization theorem, we obtain the following formula (\cite[Proposition 6.6]{BFN1})
\begin{equation}
  \label{eq:d:36}
  \iota_*^{-1}\big(M_\gamma^{\bG,\bN}(f)\big) = \sum_{\sigma \in W_{\bG}/ W_{\bG,\gamma} } \sigma(f)  \cdot \frac{r_{\sigma(\gamma)}}{ \sigma \left( \prod_{\beta \in \Delta_{\bG} : \langle \beta, \gamma \rangle > 0} \beta \right) }
\end{equation}
where $W_{\bG}$ is the Weyl group of $\bG$, and $W_{\bG,\gamma}$ is the stabilizer of $\gamma$.

Finally, observe that the sign appearing in \eqref{eq:Fourier sign} is constant on $W_{\bG}$-orbits. So we see that the Fourier transform acts on MMOs by a sign: with notation as in \S \ref{subsec: FT}, 
\begin{equation}
\label{eq: FT for MMOs}
\cF\big( \M_\gamma^{\bG, \bNone \oplus \bNtwo}(f) \big) =  (-1)^{\sum_{\xi  : \langle \xi, \gamma \rangle > 0 } \langle \xi, \gamma \rangle \dim \bNtwo(\xi)} \M^{\bG, \bNone \oplus \bNtwo^\ast}_\gamma(f)
\end{equation}

\subsection{Coulomb branches of quiver gauge theories and Kac-Moody affine Grassmannian slices}
\label{sec:qgt}

Fix a quiver $(I,E)$ with no edge loops, which we think of as an orientation of a Dynkin diagram of symmetric Kac-Moody type. Note that multiple edges are allowed, for example as in the Dynkin diagram of the affine Lie algebra $\widehat{\mathfrak{sl}}_2$.  We keep the same notation from \S \ref{sec:generalized-affine-grassmannian-slices} for coweights, dominant coweights, simple roots, etc., in this general Kac-Moody setting.

Fix $\lambda \in \coweightlattice_{++}$ and $\mu \in \coweightlattice$ with $\lambda \geq \mu$. Define $\bv = (\sv_i)_{i \in I}$ and $\bw = (\sw_i)_{i \in I}$ by the same formulas \eqref{eq:d:104} and \eqref{eq:d:104b}. Fix finite dimensional $I$--graded $\CC$--vector spaces $V = \bigoplus_{i\in I} V_i$ and $W = \bigoplus_{i\in I}$, having dimensions $\dim V_i = \sv_i$ and $\dim W_i = \sw_i$.  Define:
\begin{align}
\bG & = \prod_{i \in I} \GL(V_i),   \label{eq:bG-and-bN-for-quiver-gauge-theory}\\
\bN & = \bigoplus_{a \in E} \Hom(V_{\source(a)}, V_{\target(a)}) \oplus \bigoplus_{i\in I} \Hom(W_i, V_i)   \label{eq:bG-and-bN-for-quiver-gauge-theoryb}
\end{align}
Finally, we pick bases of each $V_i$, which determines a pinning of $\bG$. In particular, we get a maximal torus $\bT$ of $\bG$.

In physics terminology, $\bG$ and $\bN$ determine a ``quiver gauge theory'', and associated to this data, we can form a Coulomb branch. Braverman, Finkelberg, and Nakajima prove the following astonishing theorem, which is the reason Coulomb branches enter our story.  We mention that this result can be interpreted physically in terms of moduli spaces of singular monopoles \cite{BFN2,BDG}. We state a version twisted by the diagram automorphism $\ast$ from Remark \ref{rmk: diagram aut}.
\begin{Theorem}[\mbox{\cite[Theorem 3.10]{BFN2}}]
  \label{thm:bfn-slices-are-Coulomb-branches}
 Suppose the quiver $(I,E)$ is finite ADE type. Let $\lambda$ and $\mu$ be as above, and let $\bG$ and $\bN$ be defined by \eqref{eq:bG-and-bN-for-quiver-gauge-theory}.  Then there is an explicit isomorphism:
  \begin{equation}
    \label{eq:d:55}
    \cM_{C}(\bG,\bN) \overset{\sim}{\rightarrow} \overline{\cW}^\lambda_\mu
  \end{equation}
\end{Theorem}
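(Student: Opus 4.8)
The plan is to identify the two coordinate rings by embedding them into a common localized Laurent polynomial ring and matching them there. On the slice side this is the GKLO embedding of Theorem~\ref{thm:GKLO-embedding-of-slices}, $\cA(\lambda,\mu)\hookrightarrow\tcA(\lambda-\mu)_{\loc}^{S_\bv}$, under which the birational coordinates $Q_i(z),P_i(z)$ go to the explicit expressions \eqref{eq:d:13} and \eqref{eq:d:15}; since this embedding is birational, $\operatorname{Frac}\cA(\lambda,\mu)=\kk(w_{i,r},\su_{i,r})^{S_\bv}$. On the Coulomb side, for $\bG=\prod_i\GL(V_i)$ with maximal torus $\bT$ and Weyl group $W_\bG=S_\bv$ the positive roots are the $w_{i,r}-w_{i,s}$, so the localization map \eqref{eq: localization} embeds $\cA(\bG,\bN)$ into $\cA(\bT,\bN)$ localized at those roots, with image in the $S_\bv$-invariants. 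The first task is to identify these targets: by the abelian multiplication rule \cite[Theorem 4.1]{BFN1}, $\cA(\bT,\bN)$ is generated over $H^\bullet_\bT(\pt)=\kk[w_{i,r}]$ by the classes $r_{\pm e_{i,r}}$ with $r_{e_{i,r}}r_{-e_{i,r}}=\prod_\xi\xi^{|\langle\xi,e_{i,r}\rangle|\dim\bN(\xi)}$ over the weights $\xi$ of $\bN$, and for the quiver representation \eqref{eq:bG-and-bN-for-quiver-gauge-theoryb} those weights are precisely $w_{i,r}$ (multiplicity $\sw_i$), the $w_{\target(a),t}-w_{i,r}$ for $\source(a)=i$, and the $w_{i,r}-w_{\source(a),s}$ for $\target(a)=i$ --- exactly the factors seen in \eqref{eq:d:15} and \eqref{eq:d:35}. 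Sending $r_{e_{i,r}}\mapsto\su_{i,r}$ (with the sign normalization of Remark~\ref{remark: GKLO vs BFN}) gives an $S_\bv$-equivariant identification of $\cA(\bT,\bN)$ localized at the $\bG$-roots with the relevant subring of the GKLO target; in particular the Coulomb side also has fraction field $\kk(w_{i,r},\su_{i,r})^{S_\bv}$.

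Under this identification, generators are matched by a direct comparison of formulas. Specializing the localization formula \eqref{eq:d:36} for a dressed minuscule monopole operator $\M_\gamma^{\bG,\bN}(f)$ to the minuscule cocharacter $\gamma$ of $\bG$ with $\sm_i$ entries equal to $1$ in each $\GL(V_i)$-block --- so that $W_{\bG,\gamma}=S_\bm\times S_{\bv-\bm}$, the coset space $W_\bG/W_{\bG,\gamma}$ is indexed by the tuples $\Gamma$, and $\{\beta:\langle\beta,\gamma\rangle>0\}=\{w_{i,r}-w_{i,s}:r\in\Gamma_i,\ s\notin\Gamma_i\}$ --- reproduces verbatim the rational expression \eqref{eq:positive-FMOs-images-under-GKLO} for the positive FMO $\M_\bm^{+}(f)$, the numerator being exactly the product of $\bN$-weights built into the normalization of $r_{\sigma(\gamma)}$. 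The negative FMOs \eqref{eq:negative-FMOs-images-under-GKLO} are obtained the same way from the negative cocharacters, or equivalently via the Fourier transform \eqref{eq: FT for MMOs} (matching the Chevalley involution \eqref{eq:d:45} on the slice). In particular, taking the special cases \eqref{eq: ex1}--\eqref{eq: ex2}, the map \eqref{eq: localization} carries distinguished classes in $\cA(\bG,\bN)$ to the GKLO images of $Q_i(z),P_i(z),P_i^-(z)$. Since the coefficients of the $Q_i$ and $P_i$ are birational coordinates on $\overline{\cW}^\lambda_\mu$ \cite{FKMM,BDF} --- one recovers the $w_{i,r}$ as the roots of the $Q_i$ and the $\su_{i,r}$ by evaluating the $P_i$ there --- they already generate $\kk(w_{i,r},\su_{i,r})^{S_\bv}$. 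Hence $\cA(\bG,\bN)$ and $\cA(\lambda,\mu)$, both realized as subrings of this field, have the same fraction field, and $\cM_C(\bG,\bN)$ and $\overline{\cW}^\lambda_\mu$ are birational over $\AA^{(\lambda-\mu)}$.

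The last step --- upgrading this to an isomorphism, i.e.\ pinning down both coordinate rings and not just their common fraction field --- is the crux, and I expect it to be the main obstacle: the manipulations above are routine rational-function bookkeeping, but there is no evident way to see directly that the higher monopole operators generating $\cA(\bG,\bN)$ restrict to \emph{regular} functions on $\overline{\cW}^\lambda_\mu$ (as noted after \eqref{eq: ex2}, a matrix-coefficient interpretation is available only for the lowest ones). The route around this is quantization. When $\mu$ is dominant, $\overline{\cW}^\lambda_\mu$ is a normal variety quantized by the truncated shifted Yangian of \cite{KWWY}; on the other side, the quantized Coulomb branch of this quiver gauge theory is identified with the same truncated shifted Yangian \cite[Appendix B]{BFN2}; since a filtered quantization determines the associated classical ring, specializing the deformation parameter to zero yields \eqref{eq:d:55} for $\mu$ dominant. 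The general $\mu$ case is then deduced by a further argument --- shifted Yangians for arbitrary shifts, together with a comparison of the explicit GKLO presentations of the two coordinate rings, using that $\cA(\bG,\bN)$ is a finitely generated domain \cite[Corollary 5.22, Proposition 6.8]{BFN1}. The resulting isomorphism is automatically compatible with the maps to $\AA^{(\lambda-\mu)}$, since it matches the $Q_i$ on both sides, i.e.\ the equivariant parameters $w_{i,r}$ with the coordinates on the colored divisor.
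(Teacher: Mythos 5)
You should first note that the paper itself does not prove this statement: it is quoted verbatim from \cite{BFN2} (their Theorem 3.10), so there is no internal argument to compare against; the relevant question is whether your proposed proof would stand on its own. Its first half is fine and is essentially the paper's own description of the GKLO embedding in \S\ref{sec:beyond-finite-gklo-embedding}: the localization map \eqref{eq: localization} followed by forgetting matter identifies $\cA(\bG,\bN)$ with a subring of $\tcA(\lambda-\mu)_{\loc}^{S_\bv}$, and the MMO formula \eqref{eq:d:36} specializes to the FMO expressions \eqref{eq:positive-FMOs-images-under-GKLO}--\eqref{eq:negative-FMOs-images-under-GKLO}, giving the birational comparison with $\overline{\cW}^\lambda_\mu$ over $\AA^{(\lambda-\mu)}$.

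The genuine gap is in the step you yourself identify as the crux, and the quantization route you propose is circular. You invoke ``$\overline{\cW}^\lambda_\mu$ is quantized by the truncated shifted Yangian of \cite{KWWY}'': in \cite{KWWY} this is precisely the quantization \emph{conjecture}, proved there only in special cases, and the statement you actually need --- that the associated graded of the truncated shifted Yangian is $\kk[\overline{\cW}^\lambda_\mu]$ rather than merely surjecting onto it --- is not supplied by ``a filtered quantization determines the classical ring.'' Its known proofs (Appendix B of \cite{BFN2} for dominant $\mu$, and the later work of the second author for general shifts) take the classical isomorphism \eqref{eq:d:55} as an input when identifying the kernel of the map from the shifted Yangian to the quantized Coulomb branch, so you cannot use them to deduce \eqref{eq:d:55}. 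The same issue infects your appeal to \cite[Appendix B]{BFN2} on the Coulomb side, and the reduction from dominant $\mu$ to general $\mu$ is only gestured at. By contrast, the proof in \cite{BFN2} stays classical: having matched the two algebras inside the localized torus algebra, it establishes directly that monopole operators restrict to regular functions on the slice (and that the matrix-coefficient functions lie in the Coulomb branch algebra), using geometric arguments rather than Yangians --- exactly the point where your proposal substitutes an unproven (and, as used, circular) quantized statement for the missing regularity argument.
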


Motivated by this theorem, we have the following definition.
\begin{Definition}
  \label{def:kac-moody-slices}
Let $\lambda$ and $\mu$ be as above, and let $\bG$ and $\bN$ be defined by \eqref{eq:bG-and-bN-for-quiver-gauge-theory}. Then the \emph{(Kac-Moody) affine Grassmannian slice} is defined by:
  \begin{equation}
    \label{eq:d:52}
    \overline{\cW}^\lambda_\mu = \cM_{C}(\bG,\bN)
  \end{equation}
\end{Definition}
To match the notation from the previous section, we also write:
\begin{equation}
  \label{eq:d:53}
  \cA(\lambda,\mu) = \cA(\bG,\bN) = \kk[\overline{\cW}^\lambda_\mu] 
\end{equation}
When the quiver $(I,E)$ is affine type, $\overline{\cW}^\lambda_\mu$ is often called a \emph{double affine Grassmannian slice} (short for ``affine affine Grassmannian slice''). Finally, observe that $\overline{\cW}^\lambda_\mu$ does not depend on the orientation of the quiver because of the Fourier transform from \S\ref{subsec: FT}.

\subsubsection{Zastava spaces}
\label{section: zastava spaces via Coulomb}

In the special case where $\bG$ is a product of general linear groups, one can define a \emph{positive part} of the ind-scheme $\cR_{\bG,\bN}$, which we denote $\cR_{\bG,\bN}^{+}$. The Borel-Moore homology
\begin{equation}
  \label{eq:d:56}
\cA^{+}(\bG,\bN) = H^{\bG(\cO)}_\bullet(\cR_{\bG,\bN}^{+}) 
\end{equation}
forms a subalgebra of $\cA(\bG,\bN)$ and Braverman, Finkelberg, and Nakajima prove the following theorem.
\begin{Theorem}[\mbox{\cite[Corollary 3.4]{BFN2}}]
  \label{thm:bfn-zastava}
 Suppose the quiver $(I,E)$ is finite  ADE type. Let $\lambda$ and $\mu$ be as above, and let $\bG$ and $\bN$ be defined by \eqref{eq:bG-and-bN-for-quiver-gauge-theory}.  Then there is an explicit isomorphism:
  \begin{equation}
    \label{eq:d:55}
    \Spec \cA^{+}(\bG,\bN) \overset{\sim}{\rightarrow} \zastavap^{\lambda - \mu}
  \end{equation}
\end{Theorem}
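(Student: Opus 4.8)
The plan is to realize both $\kk[\zastavap^{\lambda-\mu}]$ and $\cA^+(\bG,\bN)$ as one and the same subalgebra of the localized ring $\tcA^+(\lambda-\mu)_\loc$, and to extract the isomorphism from that identification. On the zastava side this subalgebra is simply the image of the GKLO embedding \eqref{eq:d:7} (compatibly with \eqref{eq:d:8}). On the Coulomb side, for $\bG=\prod_i\GL(V_i)$ with its diagonal maximal torus $\bT$, the localization embedding \eqref{eq: localization} has target $\cA(\bT,\bN)[\beta^{-1}]_{\beta\in\Delta_\bG^+}$, which — after the sign twist of Remark \ref{remark: GKLO vs BFN}, cf.~\cite[Appendix~B]{BFN2} — is canonically $\tcA(\lambda-\mu)_\loc$: here $H_\bT^\bullet(\mathrm{pt})=\kk[w_{i,r}]$, the monopole basis element $r_\gamma$ becomes a $w$-monomial times $\prod_{i,r}\su_{i,r}^{\gamma_{i,r}}$, and $\Delta_\bG^+=\{w_{i,r}-w_{i,s}:r<s\}$. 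The positive part $\cR^+_{\bG,\bN}$ of \S\ref{section: zastava spaces via Coulomb} is the preimage of the sub-semigroup of positive-coweight $\bG(\cO)$-orbits in $\Gr_\bG$; since that semigroup is additively closed, $\cR^+_{\bG,\bN}$ is stable under convolution, so $\cA^+(\bG,\bN)=H^{\bG(\cO)}_\bullet(\cR^+_{\bG,\bN})$ is genuinely a subalgebra of $\cA(\bG,\bN)$ and \eqref{eq: localization} restricts to an embedding $\iota_\ast^{-1}\colon\cA^+(\bG,\bN)\hookrightarrow\tcA^+(\lambda-\mu)_\loc$, the target being the subring in which the $\su_{i,r}$ occur only to non-negative powers — precisely where monopole classes supported on the positive part land.

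First I would check the inclusion $\kk[\zastavap^{\lambda-\mu}]\subseteq\iota_\ast^{-1}\big(\cA^+(\bG,\bN)\big)$. By \cite{Weekes2} the positive FMOs $\M^+_\bm(f)$ generate $\kk[\zastavap^{\lambda-\mu}]$, so it suffices to see each lies in $\cA^+(\bG,\bN)$. For $\bm=\mathbf 0$ the FMO is a symmetric polynomial in the $w_{i,r}$, hence lies in the image of $H_\bG^\bullet(\mathrm{pt})\hookrightarrow\cA(\bG,\bN)$ from \eqref{eq: eqcohpt}, which is contained in $\cA^+(\bG,\bN)$. For general $\bm$ one identifies $\M^+_\bm(f)$ with the dressed minuscule monopole operator $\M^{\bG,\bN}_{\gamma_\bm}(\tilde f)$, where $\gamma_\bm$ is the minuscule coweight of $\bG$ whose $\GL(V_i)$-component is $(1,\dots,1,0,\dots,0)$ with $\sm_i$ ones; this $\gamma_\bm$ lies in the positive semigroup, so $\M^+_\bm(f)\in\cA^+(\bG,\bN)$, and the identification is a term-by-term comparison of the localization formula \eqref{eq:d:36} with \eqref{eq:positive-FMOs-images-under-GKLO} — the Weyl-orbit sum over $W_\bG/W_{\bG,\gamma_\bm}$ matches the sum over tuples $\Gamma$ with $\#\Gamma_i=\sm_i$, the root denominator matches $\prod_i\prod_{r\in\Gamma_i,\,s\notin\Gamma_i}(w_{i,r}-w_{i,s})$, the arrow summands $\Hom(V_{\source(a)},V_{\target(a)})$ of $\bN$ produce the numerator $\prod_a\prod_{r\in\Gamma_{\source(a)},\,s\notin\Gamma_{\target(a)}}(w_{\target(a),s}-w_{\source(a),r})$, and the framing summands $\Hom(W_i,V_i)$ contribute nothing, their weights pairing non-positively with $\gamma_\bm$. (This last point also makes the statement consistent: $\zastavap^{\lambda-\mu}$ depends only on $\lambda-\mu$, while $\bw$ — which enters $\bN$ only through the framing — is invisible to the positive part and affects only the negative FMOs, cf.~\eqref{eq:d:35}.)

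For the reverse inclusion I would invoke that $\cA^+(\bG,\bN)$ is generated over $H_\bG^\bullet(\mathrm{pt})$ by the dressed positive minuscule monopole operators $\M^{\bG,\bN}_{\gamma_\bm}(f)$ for $\mathbf 0\le\bm\le\bv$ (higher minuscule operators at a single vertex reducing to the case $\sm_i=1$ together with symmetric-function dressings). Each of these equals $\M^+_\bm(\tilde f)\in\kk[\zastavap^{\lambda-\mu}]$ by the previous paragraph, so $\iota_\ast^{-1}\big(\cA^+(\bG,\bN)\big)\subseteq\kk[\zastavap^{\lambda-\mu}]$. Combining the two inclusions, \eqref{eq:d:7} identifies $\kk[\zastavap^{\lambda-\mu}]$ with $\iota_\ast^{-1}\big(\cA^+(\bG,\bN)\big)$ inside $\tcA^+(\lambda-\mu)_\loc$; as both localization maps are injective, this yields an algebra isomorphism $\cA^+(\bG,\bN)\overset{\sim}{\rightarrow}\kk[\zastavap^{\lambda-\mu}]$, hence $\Spec\cA^+(\bG,\bN)\overset{\sim}{\rightarrow}\zastavap^{\lambda-\mu}$, manifestly compatible with the maps to $\AA^{(\lambda-\mu)}$ and, via Theorem \ref{thm:bfn-slices-are-Coulomb-branches}, with the closed embedding into the slice. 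Alternatively, granting Theorem \ref{thm:bfn-slices-are-Coulomb-branches} one can bypass the two inclusions: both subalgebras then coincide with $\cA(\lambda,\mu)\cap\tcA^+(\lambda-\mu)_\loc$ — on the Coulomb side because the positive part is carved out by the $\ZZ^I$-grading (``topological charge'') underlying the monopole formula, and on the zastava side because, by \eqref{eq:d:35} and the birational-coordinate property, a function on $\overline{\cW}^\lambda_\mu$ is pulled back from $\zastavap^{\lambda-\mu}$ exactly when its GKLO image involves no $\su_{i,r}^{-1}$, i.e.~is independent of the $U^-$-factor $y$ of $x a z^\mu y$.

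The main obstacle is the generation statement used for the reverse inclusion: that $\cA^+(\bG,\bN)$ is generated over the equivariant parameters by the positive minuscule monopole operators. This is the one genuinely non-formal ingredient — it rests on the structure of the convolution algebra on $\cR^+_{\bG,\bN}$, which is a union of (infinite-rank) vector bundles over minuscule-type $\bG(\cO)$-orbits so that the monopole formula applies transparently, and is in finite type essentially equivalent to Weekes's generation theorem for zastava \cite{Weekes2}. Everything else is bookkeeping: tracking the sign conventions of Remark \ref{remark: GKLO vs BFN}, verifying that $\cR^+_{\bG,\bN}$ is convolution-stable, and carrying out the comparison of \eqref{eq:d:36} with \eqref{eq:positive-FMOs-images-under-GKLO}.
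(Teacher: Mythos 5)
The paper does not prove this statement at all: it is quoted verbatim from \cite[Corollary 3.4]{BFN2}, so there is no internal argument to compare against. Judged on its own terms, your proposal has a genuine gap: it is circular. Both inclusions hinge on facts about the zastava side that, in this paper and in the literature, are themselves \emph{consequences} of the isomorphism you are trying to prove. For the inclusion $\kk[\zastavap^{\lambda-\mu}]\subseteq\iota_\ast^{-1}(\cA^+(\bG,\bN))$ you invoke that the positive FMOs generate $\kk[\zastavap^{\lambda-\mu}]$; but Weekes's generation theorem is a statement about the convolution algebra $\cA^+(\bG,\bN)$, and it is transported to the zastava space only through the BFN identification. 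For the reverse inclusion you need that each positive MMO, written via the localization formula \eqref{eq:d:36}, lands in the image of $\kk[\zastavap^{\lambda-\mu}]$ under the GKLO embedding \eqref{eq:d:7} --- i.e.\ that the rational expressions \eqref{eq:positive-FMOs-images-under-GKLO}, whose denominators $\prod(w_{i,r}-w_{i,s})$ vanish on the diagonals, define \emph{regular} functions on $\zastavap^{\lambda-\mu}$. The paper states explicitly that this regularity is ``a consequence of the construction of the zastava space in Coulomb branch terms [BFN2, \S 3(ii)]'', i.e.\ of the very theorem at issue. Absent an independent, purely zastava-side proof of regularity and generation (and you offer none), the GKLO map only gives you a \emph{birational} identification via $Q_i(z),P_i(z)$, and matching the two subalgebras of $\tcA^+(\lambda-\mu)_\loc$ never gets off the ground. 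The actual proof in \cite{BFN2} is of a different nature: it upgrades the birational map to a biregular one using factorization over the configuration space $\AA^{(\lambda-\mu)}$, reduction to small-rank computations, and normality/flatness properties of the Coulomb branch, rather than by comparing generating sets.

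Your alternative route at the end has a similar problem plus an inaccuracy: it assumes Theorem \ref{thm:bfn-slices-are-Coulomb-branches}, whereas in \cite{BFN2} the zastava statement is an ingredient on the way to the slice theorem, so the logical order is reversed; and the positive part $\cA^+(\bG,\bN)$ is \emph{not} cut out by the $\ZZ^I$-grading (topological charge) --- for $\GL(V_i)$ a coweight such as $(1,-1,0,\dots,0)$ has degree zero but does not lie in the positive submonoid, so the claimed characterization $\cA^+(\bG,\bN)=\cA(\bG,\bN)\cap\tcA^+(\lambda-\mu)_\loc$ needs a genuine argument about supports of localized classes, not just the grading. The parts of your write-up that are sound are the skeleton (convolution-stability of $\cR^+_{\bG,\bN}$, the restriction of localization to $\tcA^+(\lambda-\mu)_\loc$) and the term-by-term matching of \eqref{eq:d:36} with \eqref{eq:positive-FMOs-images-under-GKLO}, which is indeed the content of \cite[Proposition A.2]{BFN2} and of \eqref{eq:d:107} in this paper; but these are bookkeeping, not the hard step.
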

Analogous to Definition \ref{def:kac-moody-slices}, in Kac-Moody type, we define the zastava space $\zastavap^{\lambda - \mu}$ by formula \eqref{eq:d:55}. To match the notation from the previous section, we also write:
\begin{equation}
  \label{eq:d:54}
  \cA^{+}(\lambda,\mu) = \cA^{+}(\bG,\bN) = \kk[\zastavap^{\lambda - \mu}] 
\end{equation}

Similarly, there is a \emph{negative part} $\cR_{\bG,\bN}^{-}$, we analogously define $\cA^{-}(\lambda,\mu) = \cA^{-}(\bG,\bN)$, and Theorem \ref{thm:bfn-zastava} holds analogously for $\zastavam^{\lambda - \mu}$.

\subsubsection{GKLO embedding}
\label{sec:beyond-finite-gklo-embedding}
At this point we can explain where Theorem \ref{thm:GKLO-embedding-of-slices} comes from. The ring $\tcA(\lambda-\mu)$ is identified with $\cA(\bT,0)$: the $u_{i,r}$ are particular generators for this torus Coulomb branch, and the $w_{i,r}$ are the equivariant parameters generating $H^\bullet_{\bT(\cO)}(\pt)$. The GKLO embedding is exactly the composed map:
\begin{equation}
  \label{eq:d:105}
  \cA(\bG,\bN) \overset{\iota_\ast^{-1}}{\hooklongrightarrow}  \cA(\bT,\bN)\left[  \beta^{-1}\right]_{\beta \in \Delta_{\bg}^+} \overset{\bz^*}{\hooklongrightarrow} \cA(\bT,0)\left[  \beta^{-1}\right]_{\beta \in \Delta_{\bg}^+}
\end{equation}
Note that the positive roots  $\beta = w_{i,r} - w_{i,s}$ for $i\in I$ and $1 \leq r < s \leq \sv_i$. Thus Theorem \ref{thm:GKLO-embedding-of-slices} holds exactly the same in the Kac-Moody case. The only difference is that the regular functions $Q_i(z)$ and $P_i(z)$ are not defined via matrix coefficients, but rather are characterized uniquely by formulas \eqref{eq:d:13} and \eqref{eq:d:15}. Similarly, the GKLO embedding \eqref{eq:d:7} for $\zastavap^{\lambda - \mu}$ also holds exactly as before.

\subsubsection{Fundamental monopole operators}
\label{sec: FMOs}

The definition of FMOs (Definition \ref{def: FMOs}), the fact that they are regular functions (Theorem \ref{thm: FMOs are regular}), and the fact that they generate the coordinate ring (Theorem \ref{thm: FMOs generate for slice}) continue to hold without change. We can now say where FMOs come from: they are particular examples of minuscule monopole operators.

Let $\bm = (\sm_i)_{i \in I} \in \ZZ^I$ with $\mathbf{0} \leq \bm \leq \bv$. We define $\varpi_\bm$ to be the following coweight of $\bG$: it is the sum over $i$ of the $\sm_i$-th fundamental coweight $(1,\ldots,1,0,\ldots, 0)$ of the factor $\GL(V_i)$ of $\bG$. For this coweight, we have an identification between $\Lambda^\bv_{\bm}$ and $H_\bullet^{\Stab_{t^{\gamma}}}(\pt)$.
The coweight $\varpi_\bm$ is minuscule, and for $f \in \Lambda^\bv_{\bm}$, and similarly to (\cite[Proposition A.2]{BFN2}) we have:
\begin{equation}
  \label{eq:d:107}
  \M_\bm^{+}(f) = \M^{\bG,\bN}_{\varpi_\bm}(f)
\end{equation}
The coweight $-\varpi_\bm$ is also minuscule, and we have an identification between $\Lambda^\bv_{\bm}$ and $H_\bullet^{\Stab_{t^{\gamma}}}(\pt)$. For $f \in \Lambda^\bv_{\bm}$, we have
\begin{equation}
  \label{eq:d:108}
  \M_\bm^{-}(f) = (-1)^{\text{sign}} \cdot \M^{\bG,\bN}_{-\varpi_\bm}(f)
\end{equation}
where ``$\text{sign}$'' is given by \eqref{eq:d:48}.

\begin{Remark}
Note that the coweight $-\varpi_\bm$ is antidominant.  This is convenient notationally.  We could instead consider its dominant Weyl translate $\varpi_\bm^\ast = - w_0 \varpi_\bm$, as in \cite[\S A(ii)]{BFN2}.
\end{Remark}

%%%%%%%%
\section{Embeddings of Kac-Moody affine Grassmannian slices}

Our main goal in this section is to generalize Theorem \ref{thm:compatibility-of-FMOs-with-embeddings-of-slices} to Kac-Moody affine Grassmannian slices. The main difficulty is that we do not \emph{a priori} have a definition of the closed embedding \eqref{eq:d:10}, because we only have the Coulomb branch definition of Kac-Moody affine Grassmannian slices. Because we know generation by FMOs, we could try to use Theorem \ref{thm:compatibility-of-FMOs-with-embeddings-of-slices} to define the closed embedding. Unfortunately, this does not work because we do not know \emph{a priori} that the map on generators extends to an algebra homomorphism. 

Instead, we will proceed another way and construct the closed embedding \eqref{eq:d:10} using the Coulomb branch definition. Theorem \ref{thm:compatibility-of-FMOs-with-embeddings-of-slices} appears during the proof of the construction.

\subsection{Conicity condition and closed embeddings}

Let $\lambda' \in \coweightlattice_{++}$ with $\lambda \geq \lambda' \geq \mu$. In finite type, we automatically have a closed embedding of $\overline{\cW}^{\lambda'}_{\mu}$ into $\overline{\cW}^{\lambda}_{\mu}$, but in general Kac-Moody type, we need a further condition on $\lambda'$ to obtain a closed embedding.

Let $C$ denote the Cartan matrix of our Kac-Moody type, which is a symmetric matrix by assumption.

\begin{Definition}
  \label{def:conical-coweights}
  Let $\lambda, \lambda' \in \coweightlattice_{++}$ with $\lambda \geq \lambda'$. Define $\bw$ as above and $\bv'' = (\sv''_i)_{i \in I}$ by:
  \begin{equation}
    \label{eq:d:62}
 \sv''_i = \langle \lambda - \lambda', \fundweight_i \rangle   
  \end{equation}
  We say that $\lambda'$ satisfies the \emph{conicity condition} for $\lambda$ if 
  \begin{equation}
    \label{eq:d:63}
     \bu \cdot (\bw-C\bv'')  + \bu \cdot ( C \bu ) \geq 1 
  \end{equation}
  for all non-zero  $\bu$  with  $\mathbf{0} \leq \bu \leq \bv''$. Here $\cdot$ denotes the usual dot product on $\ZZ^{I}$. 
\end{Definition}

\begin{Remark}
  As we will recall in Appendix \ref{sec:appendix}, the conicity condition corresponds exactly to quiver gauge theories that are either ``good'' or ``ugly'' in the physics terminology.
\end{Remark}

\begin{Remark}
The conicity condition is automatic in finite type, and for most affine types.  See Theorem \ref{thm: conicity finite affine} for details.
\end{Remark}

\begin{Proposition}
  If $\lambda'$ satisfies the conicity condition for $\lambda$, then the Kac-Moody slice $\overline{\cW}^\lambda_{\lambda'}$ is a conical variety. That is,  $\cA(\lambda, \lambda') = \kk[\overline{\cW}^\lambda_{\lambda'}]$ admits a grading by $\ZZ_{\geq 0}$ with the $0$-graded piece consisting just of $\kk$.
\end{Proposition}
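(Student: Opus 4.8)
The plan is to produce the grading as the monopole-formula (cohomological) grading on the Coulomb branch $\cA(\lambda,\lambda') = \cA(\bG,\bN)$ attached to $\overline{\cW}^\lambda_{\lambda'}$, and then to verify the two required features — concentration in non-negative degrees, and degree-zero part equal to $\kk$ — by checking them on the algebra generators, which by Theorem~\ref{thm: FMOs generate for slice} are the FMOs.

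First, recall that $\cA(\bG,\bN) = H^{\bG(\cO)}_\bullet(\cR_{\bG,\bN})$ carries a $\ZZ$-grading (the cohomological grading, normalized so that the equivariant parameters $w_{i,r}$ sit in degree $2$), that for homogeneous $f$ the minuscule monopole operator $\M^{\bG,\bN}_\gamma(f)$ is homogeneous, and that its degree can be read off from the localization formula \eqref{eq:d:36}: the class $r_\gamma$ has degree $\sum_\xi|\langle\xi,\gamma\rangle|$ (the sum over the weights $\xi$ of $\bN$ with multiplicity), the factor $\prod_{\beta\in\Delta_{\bG}:\,\langle\beta,\gamma\rangle>0}\beta$ has degree $2\#\{\beta\in\Delta_{\bG}:\langle\beta,\gamma\rangle>0\}$, and for minuscule $\gamma$ one has $\#\{\beta\in\Delta_{\bG}:\langle\beta,\gamma\rangle>0\} = \sum_{\beta\in\Delta_{\bG}^+}|\langle\beta,\gamma\rangle|$ (pair each negative root of positive pairing with its opposite). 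Hence
\[
\deg \M^{\bG,\bN}_\gamma(f) \;=\; 2\deg f + 2\Delta(\gamma),\qquad 2\Delta(\gamma) \;:=\; \sum_\xi|\langle\xi,\gamma\rangle| \;-\; 2\sum_{\beta\in\Delta_{\bG}^+}|\langle\beta,\gamma\rangle|,
\]
the quantity $2\Delta(\gamma)$ being the monopole scaling dimension. These basic grading facts are collected in Appendix~\ref{sec:appendix} (cf.\ Lemma~\ref{lemma: first lemma appendix}).

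Next, specialize to $\gamma = \pm\varpi_\bm$ for $\mathbf 0\leq\bm\leq\bv''$, where $(\bG,\bN)$ are built from $\dim V_i = \sv''_i$ and $\dim W_i = \sw_i$. A direct count of the weights of $\bN$ and the roots of $\bG$ on which $\varpi_\bm$ is supported gives
\[
2\Delta(\varpi_\bm) \;=\; 2\Delta(-\varpi_\bm) \;=\; \bm\cdot(\bw - C\bv'') + \bm\cdot(C\bm),
\]
which is exactly the left-hand side of the conicity inequality \eqref{eq:d:63} with $\bu = \bm$. Combined with \eqref{eq:d:107} and \eqref{eq:d:108} (the sign in \eqref{eq:d:108} does not affect the degree), this shows that each FMO $\M^{\pm}_\bm(f)$ is homogeneous of degree $2\deg f + \bm\cdot(\bw - C\bv'') + \bm\cdot(C\bm)$. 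So the conicity condition says precisely that $\deg\M^\pm_\bm(f)\geq 1$ whenever $\bm\neq\mathbf 0$; and for $\bm=\mathbf 0$ we have $\M^\pm_\mathbf 0(f) = f \in \kk[w_{i,r}]_{i\in I,\, r\in[\sv''_i]}^{S_{\bv''}}$, of degree $2\deg f$, which is $\geq 0$ with equality iff $f$ is a scalar.

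To conclude: $\cA(\lambda,\lambda')$ is generated over $\kk$ by these homogeneous generators, all of non-negative degree, so it is concentrated in degrees $\geq 0$; and its degree-zero component is spanned by products of the degree-zero generators — i.e.\ by scalars — hence equals $\kk$. This is exactly the asserted conicity. The main work, and the only real obstacle, is the bookkeeping in the two middle steps: having the cohomological grading and the homogeneity of minuscule monopole operators cleanly available from the Coulomb-branch formalism, and verifying that $2\Delta(\varpi_\bm)$ unwinds to the dot-product in the conicity condition. Proceeding through the generators this way also sidesteps the alternative route through the full monopole-formula Hilbert series, which would instead force one to control $2\Delta(\gamma)$ for all cocharacters $\gamma$ of $\bG$, not merely the finitely many minuscule $\varpi_\bm$.
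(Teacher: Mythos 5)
Your proposal is correct and follows essentially the same route as the paper, which proves this Proposition as a special case of Corollary \ref{appendix:main result}: the monopole-formula grading, the degree formula \eqref{appendix 3} for minuscule monopole operators, the identity $2\Delta(\pm\varpi_\bm)=\bm\cdot(\bw-C\bv'')+\bm\cdot(C\bm)$, and generation by FMOs (so only the coweights $\pm\varpi_\bm$ need checking). The only quibble is a harmless normalization wobble (you write $2\deg f$ after declaring $w_{i,r}$ to have cohomological degree $2$, whereas the paper's \eqref{appendix 3} uses the cohomological degree of $f$ directly), which does not affect the argument since in either convention $\deg f\geq 0$ with equality exactly for scalars.
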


This result is a special case of Corollary \ref{appendix:main result}. In particular, we obtain an algebra homomorphism:
\begin{equation}
  \label{eq:d:70}
\cA(\lambda, \lambda') \twoheadrightarrow \kk
\end{equation}
which sends all elements of non-zero degree to zero.  More precisely, let $\bm = (\sm_i)_{i\in I} \in \ZZ^I$ with $0 \leq \sm_i \leq \sv_i''$ for all $i \in I$, and let $f \in \Lambda^{\bv''}_\bm$. Then the map \eqref{eq:d:70} is defined by:
\begin{equation}
  \label{eq:d:106}
   \M_{\bm}^{\pm}(f) \mapsto
   \begin{cases}
    0, & \text{if } \bm \neq \mathbf{0} \\ f(0), & \text{if }  \bm = \mathbf{0}  
   \end{cases}
\end{equation}
The above notation $f(0)$ means setting all equivariant parameters in $f$ to zero.

We can now state the main theorem of this section.
\begin{Theorem}
  \label{thm:closed-embedding-of-KM-slices}
  Let $\lambda,\lambda' \in \coweightlattice_{++}$ and $\mu \in \coweightlattice$ with $\lambda \geq \lambda' \geq \mu$. Further assume that $\lambda'$ satisfies the conicity condition for $\lambda$. Then there is a closed embedding
  \begin{equation}
    \label{eq:d:64}
 \overline{\cW}^{\lambda'}_\mu \hookrightarrow  \overline{\cW}^\lambda_\mu
  \end{equation}
under which the FMOs restrict exactly as in Theorem \ref{thm:compatibility-of-FMOs-with-embeddings-of-slices}. 
\end{Theorem}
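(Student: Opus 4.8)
The plan is to produce the algebra homomorphism $\cA(\lambda,\mu)=\kk[\overline{\cW}^\lambda_\mu]\to\kk[\overline{\cW}^{\lambda'}_\mu]=\cA(\lambda',\mu)$ underlying \eqref{eq:d:64} by assembling the standard Coulomb branch operations of \cite{BFN1} together with the cone point of $\overline{\cW}^\lambda_{\lambda'}$, and then to extract its behaviour on the FMOs, which is what both guarantees regularity and gives the formula of Theorem \ref{thm:compatibility-of-FMOs-with-embeddings-of-slices}.

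First I would set up the map. Write $\overline{\cW}^\lambda_\mu$, $\overline{\cW}^{\lambda'}_\mu$, $\overline{\cW}^\lambda_{\lambda'}$ as Coulomb branches of quiver gauge theories $(\bG,\bN)$, $(\bG',\bN')$, $(\bG'',\bN'')$; the dimension data satisfy $\bv=\bv'+\bv''$ and $\bw=\bw'+C\bv''$. A decomposition $V_i=V'_i\oplus V''_i$ exhibits $\bL=\bG'\times\bG''$ as a Levi of $\bG$ and splits $\bN|_\bL$ into a block-diagonal part (the $\bG'$-matter carrying $\sw$ flavours, plus the $\bG''$-matter $\bN''$ giving $\overline{\cW}^\lambda_{\lambda'}$) and an off-diagonal part coming from the arrows. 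Combining the forgetting-matter homomorphism \eqref{eq:d:65}, the Fourier transform \eqref{eq:d:68} applied to the off-diagonal arrow blocks (needed to absorb the sign issues stemming from the fact that $\bw-\bw'=C\bv''$ may fail to be nonnegative), and the restriction to the Levi $\bL$, one obtains — this is the content of the diagram of Theorem \ref{thm:big-tikz-diagram-defining-closed-embedding} — a homomorphism $\cA(\lambda,\mu)\to\big(\cA(\lambda,\lambda')\otimes\cA(\lambda',\mu)\big)_{\mathrm{loc}}$, defined a priori only into a localization (inverting the discriminants $\prod_{r<s}(w_{i,r}-w_{i,s})^2$ on the target). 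Composing with $\varepsilon\otimes\mathrm{id}$, where $\varepsilon\colon\cA(\lambda,\lambda')\twoheadrightarrow\kk$ is the cone point of \eqref{eq:d:70}–\eqref{eq:d:106} — whose existence is precisely where the conicity hypothesis on $\lambda'$ is used, via the Proposition above (equivalently Corollary \ref{appendix:main result}) — yields a candidate map $\Phi$, a priori only a rational map $\overline{\cW}^{\lambda'}_\mu\dashrightarrow\overline{\cW}^\lambda_\mu$. Commutativity of the defining diagram I would check by passing to the GKLO presentations, where every arrow becomes an explicit substitution in the variables $w_{i,r},\su_{i,r}$: the formulas \eqref{eq:d:67}, \eqref{eq:Fourier sign} and \eqref{eq:d:36} reduce this to bookkeeping, and the composite $\Phi$ is, on these presentations, exactly a ``$\phi$''-type map as in \S\ref{sec: adding defect} (now allowing $\su^{-1}$) followed by the specialisation $w_{i,r}\mapsto 0$ for $r>\sv'_i$.

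Next I would compute $\Phi$ on an FMO and deduce regularity. Using \eqref{eq:d:107}–\eqref{eq:d:108} to realise $\M^\pm_\bm(f)$ as a minuscule monopole operator, the computation of $\Phi(\M^\pm_\bm(f))$ is the exact analogue of the proof of Theorem \ref{thm:compatibility-of-FMOs-with-adding-defect-map-of-zastava}: in \eqref{eq:positive-FMOs-images-under-GKLO} and \eqref{eq:negative-FMOs-images-under-GKLO} the summands indexed by $\Gamma$ with $\Gamma_i\not\subseteq[\sv'_i]$ for some $i$ are killed (the relevant $\su_\Gamma^{\pm1}$ maps to $0$), while for a surviving $\Gamma$ the ``large-index'' factors of numerator and denominator cancel against the correction factor in the image of $\su_{i,r}$; the cone point $\varepsilon$ then sets the remaining $w_{i,r}$, $r>\sv'_i$, to zero. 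This gives precisely $\Phi(\M^\pm_\bm(f))=\M^\pm_\bm(\tilde f)$ when $\bm\le\bv'$ and $0$ otherwise, i.e. formula \eqref{eq:d:11}. In particular each $\Phi(\M^\pm_\bm(f))$ is a \emph{regular} function on $\overline{\cW}^{\lambda'}_\mu$ by Theorem \ref{thm: FMOs are regular}; since the FMOs generate $\cA(\lambda,\mu)$ by Theorem \ref{thm: FMOs generate for slice}, the image of $\Phi$ lies in $\cA(\lambda',\mu)$, so $\Phi$ is a genuine morphism $\overline{\cW}^{\lambda'}_\mu\to\overline{\cW}^\lambda_\mu$. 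It is a closed embedding because $\Phi$ is surjective: every generator $\M^\pm_\bm(g)$ of $\cA(\lambda',\mu)$ with $g\in\Lambda^{\bv'}_\bm$ equals $\Phi(\M^\pm_\bm(f))$ for any lift $f\in\Lambda^\bv_\bm$ of $g$ (e.g. $f$ chosen independent of the variables $w_{i,r}$, $r>\sv'_i$).

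The main obstacle I expect is the first step: each individual operation (forgetting matter, Fourier, Levi restriction, cone point) is available on its own, but one must chain them so that the composite is an honest algebra homomorphism into $\cA(\lambda',\mu)_{\mathrm{loc}}$ and is explicit enough in the $w,\su$-variables to feed into the FMO computation — in particular the correct choice of which arrow blocks to Fourier-transform is what makes everything assemble. Regularity of $\Phi$ is \emph{not} visible from the Coulomb-branch construction; it is only extracted a posteriori from the FMO formula, so the care is all in the bookkeeping of the second step — tracking which $\su_\Gamma^{\pm1}$ vanish and checking that the surviving rational factors telescope to $\M^\pm_\bm(\tilde f)$. The conicity condition is used exactly once, to guarantee the cone point $\varepsilon$ exists.
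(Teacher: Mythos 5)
Your global architecture is the paper's: chain Coulomb branch operations with the cone point supplied by conicity, compute the composite on FMOs, and use generation by FMOs to conclude that the a priori rational map is regular and surjective, hence a closed embedding; your FMO computation and the endgame (regularity, surjectivity via lifting dressings) agree with the actual proof of Theorem \ref{thm:big-tikz-diagram-defining-closed-embedding}. However, the assembly step --- which is the real content --- is misdescribed, and in a way that cannot be fixed in the order you propose. You claim that Levi restriction, forgetting matter and Fourier transforms already produce a homomorphism $\cA(\lambda,\mu)\to(\cA(\lambda,\lambda')\otimes\cA(\lambda',\mu))_{\loc}$, with the cone point $\varepsilon\otimes\mathrm{id}$ applied last. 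That is not what the diagram \eqref{eq: big tikz diagram} does, and the order is forced, not cosmetic: the only mechanism for cutting the $\bG'$-flavour from $\bw$ down to $\bw'$ is to re-use the mixed arrow blocks $\bigoplus_{a\in E}\Hom(V'_{\source(a)},V''_{\target(a)})\oplus\bigoplus_{a\in E}\Hom(V''_{\source(a)},V'_{\target(a)})$ as a flavour reservoir, via the choice of $W'_i\subseteq W_i\oplus\bigoplus V''$ in \eqref{eq:d:77}; this choice is not $\bG''$-equivariant, so it can only be made after $\bG''$ has been killed. You cannot sidestep this by simply forgetting flavours, since (as you note yourself) $\bw-\bw'=C\bv''$ need not be nonnegative and there is no operation that adds matter. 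Concretely, if you forget the mixed blocks while $\bG''$ is still alive, the $\bG'$-tensor factor you reach is $\cA(\bG',\bigoplus_{a}\Hom(V'_{\source(a)},V'_{\target(a)})\oplus\bigoplus_i\Hom(W_i,V'_i))=\kk[\overline{\cW}^{\lambda}_{\mu+\lambda-\lambda'}]$, not $\cA(\lambda',\mu)$; so no map into $(\cA(\lambda,\lambda')\otimes\cA(\lambda',\mu))_{\loc}$ arises from the operations you list. The paper therefore applies the cone point immediately after Levi restriction (Proposition \ref{prop:killing-cone}, where it is combined with the K\"unneth decomposition and forgetting the mixed matter), and only then performs the two Fourier transforms and the final forgetting of matter, all as operations on $\bG'$-matter.

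Relatedly, your proposed verification that the composite is an algebra homomorphism ``by passing to GKLO presentations, where every arrow becomes an explicit substitution'' breaks down at exactly the arrow where conicity enters: in $\widetilde{\cA}(\lambda-\mu)_{\loc}$ the variables $\su_{i,r}$ are units, so there is no ring homomorphism sending $\su_{i,r}\mapsto 0$ for $r>\sv'_i$. The ``$\phi$-type map (allowing $\su^{-1}$) followed by $w_{i,r}\mapsto 0$'' correctly describes the effect of the composite on FMOs, but it does not define a map on the localized torus algebra and so cannot be used to construct $\Phi$ or to check commutativity. This is precisely why the slice case is genuinely harder than the adding-defect computation of \S\ref{sec: adding defect} (where the $\su$'s are not inverted), and why the paper defines the cone-point step via the monopole-formula grading on $\cA(\bG'',\bigoplus_a\Hom(V''_{\source(a)},V''_{\target(a)})\oplus\bigoplus_i\Hom(W_i,V''_i))$ and proves its regularity separately by the MMO formula \eqref{eq:d:88}, before running the FMO calculation for the full composite. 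Once the assembly is reorganized in the paper's order, the rest of your argument goes through essentially as you wrote it.
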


%%%%%%%%
\subsection{Proving Theorem \ref{thm:closed-embedding-of-KM-slices}}

Let $\lambda$, $\lambda'$, and $\mu$ be as in the Theorem statement. Let $\bw$ and $\bv$ be as above, $\bv''$ as in Definition \ref{def:conical-coweights}, and $\bv'$ as in \S \ref{sec:Compatibility of FMOs with inclusions of generalized affine Grassmannian slices}.

%%%%%%
\subsubsection{Levi restriction}

We need the following Levi restriction statement, generalizing the localization map from \S \ref{sec: localization}. Since we are in the setting of quiver gauge theories $\cA(\bG,\bN)$ is generated by FMOs, and in particular by the minuscule monopole operators $\M_\gamma^{\bG,\bN}(f)$.

\begin{Proposition}
  \label{prop:levi-restriction}
Let $\bL$ be a standard Levi of $\bG$, and let $\Delta_{\bL}^+ \subset \Delta_{\bG}^+$ denote set of positive roots for $\bL$. There is an embedding
\begin{equation}
  \label{eq:d:57}
\iota_\ast^{-1} : \cA(\bG,\bN) \ \hookrightarrow \ \cA(\bL, \bN) \left[\beta^{-1}\right]_{\beta \in \Delta_{\bG}^+ \setminus \Delta_\bL^+}
\end{equation}
uniquely determined by the commutativity of the following diagram: 
\begin{equation*}
  \label{eq:d:58}
\begin{tikzcd}
\cA(\bG,\bN) \ar[rr,hook,"\iota_\ast^{-1}"] \ar[rd,hook,"\iota_\ast^{-1}"'] & & \cA(\bL, \bN) \left[\beta^{-1}\right]_{\beta \in \Delta_{\bG}^+ \setminus \Delta_\bL^+} \ar[ld,hook',"\iota_\ast^{-1}"] \\
& \cA(\bT,\bN)\left[  \beta^{-1}\right]_{\beta \in \Delta_{\bG}^+} 
\end{tikzcd}
\end{equation*}
\end{Proposition}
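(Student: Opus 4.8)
The plan is to realize the desired embedding as the unique factorization of the torus localization map $\iota_\ast^{-1}\colon\cA(\bG,\bN)\hookrightarrow\cA(\bT,\bN)[\beta^{-1}]_{\beta\in\Delta_{\bG}^+}$ of \S\ref{sec: localization} through an intermediate ``Levi-localized'' ring, and then to verify that factorization on a set of algebra generators. Applying \S\ref{sec: localization} to the split reductive group $\bL$, with its maximal torus $\bT$ and positive system $\Delta_\bL^+$, produces an embedding $\cA(\bL,\bN)\hookrightarrow\cA(\bT,\bN)[\beta^{-1}]_{\beta\in\Delta_\bL^+}$. Inverting in addition the roots of $\Delta_{\bG}^+\setminus\Delta_\bL^+$ and using exactness of localization, we obtain an embedding $j\colon\cA(\bL,\bN)[\beta^{-1}]_{\beta\in\Delta_{\bG}^+\setminus\Delta_\bL^+}\hookrightarrow\cA(\bT,\bN)[\beta^{-1}]_{\beta\in\Delta_{\bG}^+}$; the roots act as nonzerodivisors throughout since all the Coulomb branches in sight are integral. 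The proposition then reduces to the single claim that the image of $\iota_\ast^{-1}\colon\cA(\bG,\bN)\hookrightarrow\cA(\bT,\bN)[\beta^{-1}]_{\beta\in\Delta_{\bG}^+}$ lies inside the image of $j$: granting this, $x\mapsto j^{-1}(\iota_\ast^{-1}(x))$ is an algebra embedding into $\cA(\bL,\bN)[\beta^{-1}]_{\beta\in\Delta_{\bG}^+\setminus\Delta_\bL^+}$ that makes the triangle commute, and it is the unique such map because $j$ is injective.

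To establish the containment of images, I would use that in the quiver-gauge-theory setting $\cA(\bG,\bN)$ is generated as a $\kk$-algebra by $H_{\bG}^\bullet(\pt)$ together with the minuscule monopole operators $\M_\gamma^{\bG,\bN}(f)$ (Theorem \ref{thm: FMOs generate for slice} and \S\ref{sec: FMOs}); since the image of $j$ is a subalgebra, it suffices to check the containment on these generators. For $H_{\bG}^\bullet(\pt)=H_\bT^\bullet(\pt)^{W_{\bG}}\subseteq H_\bT^\bullet(\pt)^{W_\bL}=H_\bL^\bullet(\pt)\subseteq\cA(\bL,\bN)$ this is immediate. For an MMO one starts from the localization formula \eqref{eq:d:36} for $\iota_\ast^{-1}(\M_\gamma^{\bG,\bN}(f))$ and regroups the sum over $W_{\bG}/W_{\bG,\gamma}$ according to the double cosets $W_\bL\backslash W_{\bG}/W_{\bG,\gamma}$. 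For a representative $\tau$, the corresponding block is a sum over $W_\bL/(W_\bL\cap\tau W_{\bG,\gamma}\tau^{-1})=W_\bL/W_{\bL,\tau\gamma}$; splitting the denominator $\prod_{\beta\in\Delta_{\bG}:\langle\beta,\gamma\rangle>0}\beta$ into its part in $\Delta_\bL$ and its part in $\Delta_{\bG}\setminus\Delta_\bL$, using that $W_\bL$ permutes the roots of $\Delta_{\bG}^+\setminus\Delta_\bL^+$ (they are the $\bT$-weights of the unipotent radical of the standard parabolic with Levi $\bL$), and using that $\bG$ --- hence every standard Levi $\bL$ --- is a product of general linear groups, so that the minuscule $\bG$-coweight $\tau\gamma$ remains minuscule for $\bL$, one recognizes this block, after clearing the $\Delta_{\bG}^+\setminus\Delta_\bL^+$-denominators, as the image under the localization map for $\bL$ of a suitably dressed $\bL$-minuscule monopole operator $\M^{\bL,\bN}_{\tau\gamma}(g)$. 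Hence $\iota_\ast^{-1}(\M_\gamma^{\bG,\bN}(f))$ lies in the image of $j$, as required.

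I expect the only real obstacle to be the bookkeeping in this last regrouping: one must confirm that the extra $\Delta_{\bG}^+\setminus\Delta_\bL^+$-factors, which a priori vary across a $W_\bL$-orbit, assemble into an \emph{honest polynomial} dressing for the $\bL$-MMO rather than a genuine rational function. This reduces to the observation that $\prod_{\beta\in\Delta_{\bG}^+\setminus\Delta_\bL^+}\beta$ is $W_\bL$-invariant, so dividing it by the $\gamma$-positive part of that product (which is $W_{\bL,\tau\gamma}$-invariant) yields a $W_{\bL,\tau\gamma}$-invariant polynomial whose $W_\bL$-translates are precisely the factors that occur, matching the shape of \eqref{eq:d:36} for $\bL$; the leftover $\Delta_{\bG}^+\setminus\Delta_\bL^+$-denominators are harmless because they have been inverted in the target of $j$. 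A more conceptual route, which bypasses this computation, would be to reconstruct the map geometrically exactly as the torus localization of \S\ref{sec: localization} is built in \cite{BFN1}, but with a cocharacter generic in the centre $Z(\bL)$ --- whose centralizer in $\bG$ is $\bL$ --- in place of a generic cocharacter of $\bT$; the compatibility triangle is then automatic.
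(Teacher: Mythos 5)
Your proposal is correct and follows essentially the same route as the paper's proof: reduce to dressed minuscule monopole operators as generators, regroup the localization formula \eqref{eq:d:36} over the $\bL$-dominant elements of the $W_{\bG}$-orbit of $\gamma$ (i.e.\ over $W_\bL\backslash W_\bG/W_{\bG,\gamma}$), and clear the $\Delta_{\bG}^+\setminus\Delta_\bL^+$ factors to exhibit each block as an allowed-denominator multiple of a dressed $\bL$-MMO. The only (cosmetic) difference is in the bookkeeping of that clearing step: you divide the $W_\bL$-invariant product $\prod_{\beta\in\Delta_{\bG}^+\setminus\Delta_\bL^+}\beta$ by its $\gamma_j$-positive part, while the paper multiplies numerator and denominator by $\prod_{\tau\in W_\bL/W_{\bL,\gamma_j}}\tau\bigl(\prod_{\eta\in\Delta_{\bG}^+\setminus\Delta_\bL^+:\langle\eta,\gamma_j\rangle>0}\eta\bigr)$; both produce a polynomial dressing and leftover denominators among the inverted roots.
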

\begin{proof}
Because the MMOs generate $\cA(\bG,\bN)$, it suffices to show that their images in \linebreak $\cA(\bT,\bN)\left[  \beta^{-1}\right]_{\beta \in \Delta_{\bG}^+} $ are equal to the  images of elements of $\cA(\bL, \bN) \left[\beta^{-1}\right]_{\beta \in \Delta_{\bG}^+ \setminus \Delta_\bL^+}$. In fact, we will show that the image of each MMO is a linear combination of MMOs from $\cA(\bL, \bN)$ with coefficients only involving the allowed denominators. 

Let $\gamma$ be a minuscule dominant weight for $\bG$, and let $f \in H^\bullet_{\bT}(\pt)^{W_{\bG,\gamma}}$ be a dressing for $\gamma$. Then under localization to the torus, the corresponding MMO is:
  \begin{equation}
    \label{eq:d:82}
\iota_\ast^{-1} \big(M_\gamma^{\bG,\bN}(f) \big) =  \sum_{\sigma \in W_{\bG}/ W_{\bG,\gamma} } \sigma(f)  \cdot \frac{r_{\sigma(\gamma)}}{ \sigma \left( \prod_{\beta \in \Delta_{\bG}^{+} : \langle \beta, \gamma \rangle > 0} \beta \right) }
  \end{equation}

  Let $\{ \gamma_1, \ldots, \gamma_N \}$ be the elements of $W_{\bG} \cdot \gamma$ that are dominant for $\bL$.  For each $1 \leq j\leq N$, choose $\sigma_j \in W_\bG$ such that $\gamma_j = \sigma_j(\gamma)$. Then \eqref{eq:d:82} is equal to:
  \begin{equation}
    \label{eq:d:83}
   \sum_{j = 1}^{N} \sum_{\sigma \in W_{\bL}/ W_{\bL,\gamma_j} } \sigma\sigma_j (f)  \cdot \frac{r_{\sigma(\gamma_j)}}{ \sigma \left( \prod_{\beta \in \Delta_{\bL}^{+}  : \langle \beta, \gamma_j \rangle > 0} \beta  \cdot \prod_{\eta \in \Delta_{\bG}^{+} \backslash \Delta_{\bL}^+  : \langle \eta, \gamma_j \rangle > 0} \eta \right) }
  \end{equation}
Consider a fixed summand indexed by $j \in [N]$. Multiplying that summand by
\begin{equation}
  \label{eq:d:84}
  \prod_{{\tau \in W_{\bL}/ W_{\bL,\gamma_j} }} \tau \left( \prod_{\eta \in \Delta_{\bG}^{+} \backslash \Delta_{\bL}^+  : \langle \eta, \gamma_j \rangle > 0} \eta \right)
\end{equation}
on denominator and numerator, we get the image under $\iota_\ast^{-1}$ of
\begin{equation}
  \label{eq:d:86}
  \frac{1}{\prod_{{\tau \in W_{\bL}/ W_{\bL,\gamma_j} }} \tau \left( \prod_{\eta \in \Delta_{\bG}^{+} \backslash \Delta_{\bL}^+  : \langle \eta, \gamma_j \rangle > 0} \eta \right)} \cdot M^{\bL,\bN}_{\gamma_i}\left(   
\sigma_j(  f) \cdot
  \prod_{{\tau \in W_{\bL}/ W_{\bL,\gamma_j} } : \tau \neq e} \tau \left( \prod_{\eta \in \Delta_{\bG}^{+} \backslash \Delta_{\bL}^+  : \langle \eta, \gamma_j \rangle > 0} \eta \right) \right)
\end{equation}
which is an element of $\cA(\bL, \bN) \left[\beta^{-1}\right]_{\beta \in \Delta_{\bG}^+ \setminus \Delta_\bL^+}$.
\end{proof}

In formula \eqref{eq:d:36} for MMOs, we can abuse notation and formally consider the dressing to be an element of the fraction field of $H^\bullet_{\Stab_{t^{\gamma}}}(\pt)$. With this generalization, we can write \eqref{eq:d:86} more compactly as
\begin{equation}
  \label{eq:d:87}
\iota_\ast^{-1} \big( \M_\gamma^{\bG,\bN}(f) \big) = \sum_j M^{\bL,\bN}_{\gamma_j} \left( \frac{\sigma_j(f)}{\prod_{\substack{\beta \in \Delta_\bG \setminus \Delta_\bL, \\ \langle \beta, \gamma_j\rangle < 0}}\beta}\right)
\end{equation}
where $\sigma_j \in W_\bG$ is any element such that $\gamma_j = \sigma_j(\gamma)$.

%%%%%%%
\subsubsection{Constructing the closed embedding}
\newcommand{\bNprimew}{{\bN'_\bw}}
\newcommand{\bNprimeprimew}{{\bN''_\bw}}
\newcommand{\bNmix}{{\bN^{\mathrm{mix}}}}
\newcommand{\bNmixone}{{\bN_1^{\mathrm{mix}}}}
\newcommand{\bNmixtwo}{{\bN_2^{\mathrm{mix}}}}

For each $i \in I$, we decompose 
\begin{equation}
V_i = V_i'\oplus V_i''
\end{equation}
where $V_i'$ is the span of the first $\sv_i'$ basis vectors of $V_i$, and $V_i''$ is the span of the remaining $\sv_i''$ basis vectors. Taking the product over $i\in I$ of the corresponding subgroups $\GL(V_i') \times \GL(V_i'') \subset \GL(V_i)$, we obtain a standard Levi subgroup $\bG' \times \bG'' \subset \bG$.  As a representation of $\bG' \times \bG''$, we have
\begin{equation}
\bN  \ = \ \bNprimew \oplus \bNmix \oplus \bNprimeprimew
\end{equation}
where: 
\begin{align}
\bNprimew & = \bigoplus_{a \in E} \Hom(V'_{\source(a)}, V'_{\target(a)}) \oplus \bigoplus_{i\in I} \Hom(W_i, V'_i), \\
\bNmix & = \bigoplus_{a \in E} \Hom(V'_{\source(a)}, V''_{\target(a)}) \oplus \bigoplus_{b \in E} \Hom(V''_{\source(b)}, V'_{\target(b)}), \\
\bNprimeprimew &= \bigoplus_{a \in E} \Hom(V''_{\source(a)}, V''_{\target(a)}) \oplus \bigoplus_{i \in I} \Hom(W_i, V''_i)
\end{align}

By definition, we have
\begin{equation}
  \label{eq:d:69}
  \overline{\cW}_{\lambda'}^\lambda = \cM_C(\bG'', \bNprimeprimew)
\end{equation}
We have assumed that $\lambda'$ satisfies the conicity condition for $\lambda$, so there is an algebra homomorphism as in \eqref{eq:d:70}:
\begin{equation}
  \label{eq:d:71}
 \cA(\bG'', \bNprimeprimew)  \twoheadrightarrow \kk
\end{equation}
By the K\"unneth formula we also have \cite[\S 3(vii)(a)]{BFN1}:
\begin{equation}
  \label{eq:d:72}
  \cA(\bG' \times \bG'', \bNprimew \oplus \bNprimeprimew) = \cA(\bG',\bNprimew) \otimes \cA(\bG'',\bNprimeprimew) 
\end{equation}
Combining this with \eqref{eq:d:71} gives us a surjective map:
\begin{equation}
  \label{eq:d:73}
  \cA(\bG' \times \bG'', \bNprimew \oplus \bNprimeprimew) \twoheadrightarrow  \cA(\bG',\bNprimew) 
\end{equation}

\begin{Proposition}
  \label{prop:killing-cone}
There is a surjective map $\pi: \cA(\bG'\times \bG'', \bN) \twoheadrightarrow \cA(\bG', \bNprimew \oplus \bNmix ) $ which is uniquely determined by the commutativity of the following square:
\begin{equation}
  \label{eq:d:74}
\begin{tikzcd}
\cA(\bG'\times \bG'', \bN) \ar[d,hook,"\bz^\ast"] \ar[r,two heads,"\pi"] & \cA(\bG', \bNprimew \oplus \bNmix ) \ar[d,hook,"\bz^\ast"] \\
\cA(\bG' \times \bG'', \bNprimew \oplus \bNprimeprimew)  \ar[r,two heads] & \cA(\bG',\bNprimew)
\end{tikzcd}
\end{equation}
where the vertical arrows are obtained by forgetting $\bNmix$, and the lower horizontal arrow is \eqref{eq:d:73}.  

Furthermore, let $\gamma = (\gamma',\gamma'')$ be a minuscule coweight for $\bG' \times \bG''$, and let $f^{(1)} \otimes f^{(2)} \in H^\bullet_{\Stab_{t^{\gamma'}}}(\pt) \otimes H^\bullet_{\Stab_{t^{\gamma''}}}(\pt) = H^\bullet_{\Stab_{t^{\gamma}}}(\pt)$. Then we have
\begin{equation}
\label{eq:d:88}
 \pi \left( M^{\bG' \times \bG'', \bN}_{(\gamma',\gamma'')}\big(f^{(1)} \otimes f^{(2)}\big) \right) =  \begin{cases}   M^{\bG', \bNprimew \oplus \bNmix}_{\gamma'}\left(f^{(1)} \otimes f^{(2)}(0)\right), & \ \text{if } \gamma'' = 0 \\ 0, & \ \text{if } \gamma'' \neq 0 \end{cases}
\end{equation}

where $f^{(2)} \mapsto f^{(2)}(0)$ is the map $H^\bullet_{\Stab_{t^{\gamma''}}}(\pt) \rightarrow H^0_{\Stab_{t^{\gamma''}}}(\pt) = \kk$.
\end{Proposition}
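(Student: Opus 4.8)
\emph{Proof strategy.} The plan is to construct $\pi$ on the level of generators, exploiting that the right-hand vertical map $\bz^\ast$ in \eqref{eq:d:74} is injective. Write $\Phi\colon\cA(\bG'\times\bG'',\bN)\to\cA(\bG',\bNprimew)$ for the composite in \eqref{eq:d:74} that goes down and then right, i.e.\ \eqref{eq:d:73} precomposed with the left-hand $\bz^\ast$; this is an algebra homomorphism. Because the right-hand $\bz^\ast$ is injective, a map $\pi$ making the square commute exists, is unique, and is automatically an algebra homomorphism, \textbf{provided} the image of $\Phi$ is contained in the subalgebra $\bz^\ast\big(\cA(\bG',\bNprimew\oplus\bNmix)\big)\subseteq\cA(\bG',\bNprimew)$. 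By Theorem~\ref{thm: FMOs generate for slice} the source is generated by FMOs, which are up to sign the minuscule monopole operators $\M^{\bG'\times\bG'',\bN}_{\pm\varpi_\bm}(f)$ of \eqref{eq:d:107}--\eqref{eq:d:108}; so it suffices to compute $\Phi$ on a minuscule monopole operator $\M^{\bG'\times\bG'',\bN}_{(\gamma',\gamma'')}(f^{(1)}\otimes f^{(2)})$ with $(\gamma',\gamma'')=\pm\varpi_\bm$, and to check that the result equals $\bz^\ast$ of the claimed right-hand side of \eqref{eq:d:88}. The formula \eqref{eq:d:88} and the surjectivity of $\pi$ then drop out at the end.

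\emph{The key computation.} I would evaluate $\Phi$ in three steps, all after localizing to the torus $\bT=\bT'\times\bT''$ and using the explicit formulas of \S\ref{subsection: Coulomb branches in general}. \textbf{Step 1 (forget $\bNmix$).} The weights of $\bNmix$ form a permutation-stable multiset of mixed $\bT'\times\bT''$-weights, so its character is $W_{\bG'\times\bG''}$-invariant; combining \eqref{eq:d:66}, \eqref{eq:d:36} and \eqref{eq:d:67}, and using injectivity of $\iota_\ast^{-1}$, gives
\[
\bz^\ast\big(\M^{\bG'\times\bG'',\bN}_\gamma(g)\big)\ =\ \M^{\bG'\times\bG'',\bNprimew\oplus\bNprimeprimew}_\gamma\big(g\cdot D_\gamma\big),\qquad D_\gamma\ :=\ \prod_{\xi\,:\,\langle\xi,\gamma\rangle<0}\xi^{-\langle\xi,\gamma\rangle\,\dim\bNmix(\xi)},
\]
the product running over the weights $\xi$ of $\bNmix$; here $D_\gamma$ is a genuine, $W_{\bG,\gamma}$-invariant, polynomial dressing. \textbf{Step 2 (K\"unneth).} Since $\bNprimew$ and $\bNprimeprimew$ are representations of the separate factors $\bG'$ and $\bG''$, the K\"unneth isomorphism \eqref{eq:d:72} identifies $\cA(\bG'\times\bG'',\bNprimew\oplus\bNprimeprimew)=\cA(\bG',\bNprimew)\otimes\cA(\bG'',\bNprimeprimew)$ and sends $\M_{(\gamma',\gamma'')}(g'\otimes g'')$ to $\M^{\bG',\bNprimew}_{\gamma'}(g')\otimes\M^{\bG'',\bNprimeprimew}_{\gamma''}(g'')$; expanding each mixed weight $\xi=\xi'+\xi''$ writes $(f^{(1)}\otimes f^{(2)})\cdot D_\gamma$ as a sum of such pure tensors (the relevant rings of Weyl invariants split as tensor products over the two sets of variables). \textbf{Step 3 (kill the cone).} Now apply $\mathrm{id}\otimes\big(\cA(\bG'',\bNprimeprimew)\twoheadrightarrow\kk\big)$ from \eqref{eq:d:71}. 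If $\gamma''\neq 0$, the $\bG''$-factor of each summand is a monopole operator $\M^{\bG'',\bNprimeprimew}_{\gamma''}(\cdot)$ for the nonzero minuscule coweight $\gamma''=\pm\varpi_{\bm''}$ with $\bm''\neq\mathbf{0}$, which dies by \eqref{eq:d:106}; hence $\Phi$ vanishes, as in \eqref{eq:d:88}. If $\gamma''=0$, the $\bG''$-factor is merely a dressing in $H^\bullet_{\bG''}(\pt)\subset\cA(\bG'',\bNprimeprimew)$, sent to its value at the origin, which amounts to setting every $\bT''$-equivariant parameter to zero. A short regrouping of the mixed weights $\xi=\xi'+\xi''$ by their $\bT'$-components then shows that specializing the $\bT''$-parameters to zero in $D_{(\gamma',0)}$ produces exactly $D'_{\gamma'}:=\prod_{\eta\,:\,\langle\eta,\gamma'\rangle<0}\eta^{-\langle\eta,\gamma'\rangle\,\dim_{\bG'}\bNmix(\eta)}$, so that
\[
\Phi\big(\M^{\bG'\times\bG'',\bN}_{(\gamma',0)}(f^{(1)}\otimes f^{(2)})\big)\ =\ \M^{\bG',\bNprimew}_{\gamma'}\big(f^{(1)}\,f^{(2)}(0)\,D'_{\gamma'}\big).
\]
Running Step~1 a second time---now for the group $\bG'$, forgetting $\bNmix$ out of $\bNprimew\oplus\bNmix$---identifies the right-hand side with $\bz^\ast\big(\M^{\bG',\bNprimew\oplus\bNmix}_{\gamma'}(f^{(1)}\otimes f^{(2)}(0))\big)$, which lies in the image of the right-hand $\bz^\ast$, as required.

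\emph{Conclusion and main obstacle.} The above shows the image of $\Phi$ lies in $\bz^\ast\big(\cA(\bG',\bNprimew\oplus\bNmix)\big)$, hence $\pi$ exists, is the unique algebra map making \eqref{eq:d:74} commute, and applying $(\bz^\ast)^{-1}$ to the displayed formulas gives \eqref{eq:d:88}. For surjectivity, take $\gamma''=0$ and $f^{(2)}=1$ in \eqref{eq:d:88}: then $\pi$ hits $\M^{\bG',\bNprimew\oplus\bNmix}_{\gamma'}(f^{(1)})$ for every minuscule $\gamma'$ and every dressing $f^{(1)}$, and these generate $\cA(\bG',\bNprimew\oplus\bNmix)$ by Theorem~\ref{thm: FMOs generate for slice}. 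I expect the main obstacle to be the bookkeeping in Steps~2--3: tracking the mixed $\bT'\times\bT''$-weights of $\bNmix$ through the K\"unneth decomposition and verifying that ``setting the $\bT''$-parameters to zero'' turns the $\bG'\times\bG''$-side dressing $D_\gamma$ into precisely the $\bG'$-side dressing $D'_{\gamma'}$ coming from the second application of $\bz^\ast$. The other delicate point---really an input from Appendix~\ref{sec:appendix}---is that the conicity hypothesis is exactly what makes \eqref{eq:d:106} hold, so that the $\gamma''\neq 0$ monopole operators are annihilated by \eqref{eq:d:71}.
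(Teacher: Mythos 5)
Your proposal is correct and takes essentially the same route as the paper: the paper also defines $\pi$ through the commutative square (a priori only as a rational/localized map), verifies \eqref{eq:d:88} by exactly this localization computation combining the forgetting-matter formula \eqref{eq:d:67}, the K\"unneth factorization, and the cone-point map \eqref{eq:d:106}, and then concludes regularity and surjectivity from generation by minuscule monopole operators. The only small adjustment is the citation: generation by MMOs for the auxiliary algebras $\cA(\bG'\times \bG'',\bN)$ and $\cA(\bG',\bNprimew\oplus\bNmix)$ should be taken from \cite[Remark 3.8]{Weekes1} (as the paper does), since Theorem \ref{thm: FMOs generate for slice} is stated only for the slices $\cA(\lambda,\mu)$.
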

\begin{proof}

 Clearly $\pi$ is a well-defined rational map. For the rational map $\pi$, the formulas \eqref{eq:d:88}  follow by direct calculation using \eqref{eq:d:67} and \eqref{eq:d:106}. 

 By \cite[Remark 3.8]{Weekes1}, $\cA(\bG' \times \bG'', \bNprimew \oplus \bNprimeprimew)$ and $\cA(\bG',\bNprimew)$ are each generated by MMOs. Formula \eqref{eq:d:88} tells us that all the MMOs map to MMOs, and that each MMO is the image of an MMO. So we conclude that $\pi$ is a regular map, and it is surjective.
\end{proof}

\newcommand{\bNthree}{{\bN^{(3)}}}
\newcommand{\bNfour}{{\bN^{(4)}}}
\newcommand{\bNfourp}{{\bN^{(4)}_+}}
\newcommand{\bNfourm}{{\bN^{(4)}_-}}

We can write $\bNmix = \bNmixone  \oplus \bNmixtwo$ where:
\begin{align}
  \label{eq:d:75}
  \bNmixone = \bigoplus_{a \in E} \Hom(V'_{\source(a)}, V''_{\target(a)}) \\
  \bNmixtwo = \bigoplus_{b \in E} \Hom(V''_{\source(b)}, V'_{\target(b)})
\end{align}
The Fourier transform from \S \ref{subsec: FT} provides us with a isomorphism
\begin{equation}
  \label{eq:d:76}
  \cF : \cA(\bG', \bNprimew \oplus \bNmix) \stackrel{\sim}{\longrightarrow} \cA\big(\bG', \bNprimew \oplus (\bNmixone)^* \oplus \bNmixtwo\big)
\end{equation}
For each $i \in I$, we fix a vector subspace
\begin{equation}
  \label{eq:d:77}
  W_i' \ \subseteq \ W_i \oplus \bigoplus_{a \in E : \source(a) = i} V_{\target(a)}'' \oplus \bigoplus_{b \in E : \target(a) = i} V_{\source(a)}'' 
\end{equation}
with $\dim W_i' = \sw_i'$. This is possible because one computes the right hand side of \eqref{eq:d:77} to have dimension $\sw_i' + 2 \sv_i''$. Let $X_i$ be any complementary vector subspace. Then we define
\begin{equation}
  \label{eq:d:78}
  \bNthree = \bigoplus_{a \in E} \Hom(V'_{\source(a)}, V'_{\target(a)}) \oplus \bigoplus_i \Hom(W'_i, V'_i)
\end{equation}
and:
\begin{equation}
  \label{eq:d:79}
  \bNfour = \bigoplus_i \Hom(X_i,V_i')
\end{equation}
In particular, we have:
\begin{equation}
\label{eq: the reveal}
\cM_C(\bG',\bNthree) \ = \ \overline{\cW}_\mu^{\lambda'}
\end{equation}
By construction, we can $\bG'$-equivariantly identify:
\begin{equation}
  \label{eq:d:80}
 \bNprimew \oplus (\bNmixone)^* \oplus \bNmixtwo) = \bNthree \oplus \bNfour 
\end{equation}
So we view \eqref{eq:d:76} as an isomorphism:
\begin{equation}
  \label{eq:d:81}
  \cF : \cA(\bG', \bNprimew \oplus \bNmix) \stackrel{\sim}{\longrightarrow} \cA(\bG', \bNthree \oplus \bNfour)
\end{equation}

For each $i \in I$, let us further choose a vector-space decomposition $X_i = X_i^{+} \oplus X_i^{-}$ with $\dim X_i^{+} = \dim X_i^{-} = \sv_i''$, and write 
\begin{equation}
  \label{eq:d:90}
  \bNfourp  = \bigoplus_i \Hom(X^+_i,V_i')
\end{equation}
and 
\begin{equation}
  \label{eq:d:91}
  \bNfourm  = \bigoplus_i \Hom(X^-_i,V_i')
\end{equation}
Then we further have a Fourier transform isomorphism
\begin{equation}
  \label{eq:d:92}
  \cF : \cA(\bG', \bNthree \oplus \bNfour)  \stackrel{\sim}{\longrightarrow}  \cA\big(\bG', \bNthree \oplus \bNfourp \oplus (\bNfourm)^* \big)
\end{equation}

To prove Theorem \ref{thm:closed-embedding-of-KM-slices}, we prove the following more elaborate statement. 
\begin{Theorem}[Elaborated version of Theorem \ref{thm:closed-embedding-of-KM-slices}]
  \label{thm:big-tikz-diagram-defining-closed-embedding}
There is a surjective homomorphism  $\cA(\lambda, \mu) = \cA(\bG,\bN) \twoheadrightarrow  \cA(\bG', \bN^{(3)}) = \cA(\lambda',\mu)$ that is uniquely determined by the commutativity of the following diagram:
\begin{equation}
\label{eq: big tikz diagram}
\begin{tikzcd}\cA(\bG,\bN) \ar[r, hook, "\iota_\ast^{-1}"] \ar[dddd,dashed,two heads] & \cA(\bG'\times \bG'', \bN)\Big[ \prod_{i\in I} \prod_{\substack{1\leq r \leq \bv'_i, \\ \bv'_i < s \leq \bv_i}}(w_{i,r} - w_{i,s})^{-1} \Big] \ar[d,two heads,"\pi"] \\
& \cA(\bG', \bNprimew \oplus \bNmix)\Big[\prod_{i\in I}\prod_{1\leq r \leq \bv_i'} w_{i,r}^{-1} \Big] \ar[d,"\cF"]\\
& \cA(\bG',\bNthree\oplus\bNfour)\Big[\prod_{i\in I}\prod_{1\leq r \leq \bv_i'} w_{i,r}^{-1} \Big]\ar[d,"\cF"]\\
& \cA\big(\bG', \bNthree \oplus \bNfourp \oplus (\bNfourm)^* \big)\Big[\prod_{i\in I}\prod_{1\leq r \leq \bv_i'} w_{i,r}^{-1} \Big]\ar[d,hook,"\bz^\ast"]\\
\cA(\bG', \bNthree) \ar[r,hook] &\cA(\bG',\bNthree)\Big[\prod_{i\in I}\prod_{1\leq r \leq \bv_i'} w_{i,r}^{-1} \Big]
\end{tikzcd}
\end{equation}
Additionally, under this map, the FMOs restrict exactly as in Theorem \ref{thm:compatibility-of-FMOs-with-embeddings-of-slices}.
\end{Theorem}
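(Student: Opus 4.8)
The plan is to read off the desired homomorphism $\cA(\bG,\bN)\twoheadrightarrow\cA(\bG',\bNthree)$ from the right-hand column of \eqref{eq: big tikz diagram}, which I claim is already a well-defined chain of algebra maps whose composite lands in $\cA(\bG',\bNthree)\bigl[\prod_{i,r}w_{i,r}^{-1}\bigr]$. The key step is then to trace the fundamental monopole operators through this composite: I will show that each FMO of $\cA(\bG,\bN)$ is sent to an FMO of $\cA(\bG',\bNthree)$ or to zero, according to the formula of Theorem \ref{thm:compatibility-of-FMOs-with-embeddings-of-slices}. Since the FMOs generate $\cA(\bG,\bN)$ (Theorem \ref{thm: FMOs generate for slice}) and are regular on $\overline{\cW}^{\lambda'}_\mu$ (Theorem \ref{thm: FMOs are regular}), this forces the composite to corestrict through the injection $\cA(\bG',\bNthree)\hookrightarrow\cA(\bG',\bNthree)\bigl[\prod w_{i,r}^{-1}\bigr]$, yielding the (necessarily unique) dashed arrow; surjectivity and the FMO formula come out of the same trace, and $\Spec$ of the surjection is the asserted closed embedding, which also proves Theorem \ref{thm:closed-embedding-of-KM-slices}.

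First I would check well-definedness of the right column. The top arrow is the Levi restriction $\iota_\ast^{-1}$ of Proposition \ref{prop:levi-restriction} for the standard Levi $\bG'\times\bG''\subset\bG$ coming from $V_i=V_i'\oplus V_i''$: the inverted factors $(w_{i,r}-w_{i,s})$ with $r\leq\bv_i'<s\leq\bv_i$ are exactly the positive roots in $\Delta_\bG^+\setminus\Delta_{\bG'\times\bG''}^+$, so this is precisely the map \eqref{eq:d:57}. The arrow $\pi$ is the surjection of Proposition \ref{prop:killing-cone}; it is $H^\bullet_\bG(\pt)$-linear and, because the cone-point homomorphism \eqref{eq:d:70} kills the equivariant parameters of $\bG''$, it sends $w_{i,s}\mapsto0$ for $s>\bv_i'$ and $w_{i,r}\mapsto w_{i,r}$ for $r\leq\bv_i'$, so $(w_{i,r}-w_{i,s})^{-1}\mapsto w_{i,r}^{-1}$ and $\pi$ extends to the indicated localizations. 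The two Fourier transforms are isomorphisms fixing all $w_{i,r}$ (\S\ref{subsec: FT}), via the identification \eqref{eq:d:80} and $\bNfour=\bNfourp\oplus\bNfourm$; and $\bz^*$ is the forgetting-matter embedding for $\bNfourp\oplus(\bNfourm)^*$, which is $H^\bullet_{\bG'}(\pt)$-linear. All three extend over $\prod w_{i,r}^{-1}$, so the composite is a well-defined algebra homomorphism into $\cA(\bG',\bNthree)\bigl[\prod w_{i,r}^{-1}\bigr]$.

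Next I would trace an FMO. By \eqref{eq:d:107}--\eqref{eq:d:108} it is enough to follow the minuscule monopole operator $\M^{\bG,\bN}_{\pm\varpi_\bm}(f)$. Applying \eqref{eq:d:87}, its $\iota_\ast^{-1}$-image is a sum over splittings $\bm=\bm'+\bm''$ (with $\mathbf{0}\leq\bm'\leq\bv'$, $\mathbf{0}\leq\bm''\leq\bv''$) of MMOs $\M^{\bG'\times\bG'',\bN}_{(\pm\varpi_{\bm'},\,\pm\varpi_{\bm''})}$ whose dressings are Weyl-shuffles of $f$ divided by the allowed root factors. Applying $\pi$ and \eqref{eq:d:88}, every summand with $\bm''\neq\mathbf{0}$ dies, so the image is $0$ unless $\bm\leq\bv'$; when $\bm\leq\bv'$ only the term $\bm'=\bm$ survives, and the specialization $f^{(2)}\mapsto f^{(2)}(0)$ in \eqref{eq:d:88} turns the shuffled dressing into one built from $\tilde f$ (compare the identity $\tilde f=\sum f^{(1)}f^{(2)}(0)$ used for zastava spaces). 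The two Fourier transforms then multiply by the signs of \eqref{eq: FT for MMOs} and carry the operator, via \eqref{eq:d:80}, to an MMO for $\bNthree\oplus\bNfourp\oplus(\bNfourm)^*$; finally $\bz^*$, by \eqref{eq:d:67} in the localized square \eqref{eq:d:66} together with \eqref{eq:d:36}, multiplies the dressing by $\prod_{\xi:\langle\xi,\varpi_\bm\rangle<0}\xi^{-\langle\xi,\varpi_\bm\rangle\dim(\bNfourp\oplus(\bNfourm)^*)(\xi)}$ and lands in $\cA(\bG',\bNthree)$. A direct calculation comparing the accumulated weight- and root-products and signs against Definition \ref{def: FMOs} then identifies the result with $\M^\pm_\bm(\tilde f)$, exactly as in Theorem \ref{thm:compatibility-of-FMOs-with-embeddings-of-slices}.

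It remains to assemble the conclusion. Since the FMOs generate $\cA(\bG,\bN)$ and are sent to FMOs or $0$ of $\cA(\bG',\bNthree)$, the right-column composite corestricts through $\cA(\bG',\bNthree)\hookrightarrow\cA(\bG',\bNthree)\bigl[\prod w_{i,r}^{-1}\bigr]$ to a unique algebra map $\cA(\lambda,\mu)\to\cA(\lambda',\mu)$ making \eqref{eq: big tikz diagram} commute, and the displayed FMO formula holds by the trace above. This map is surjective: every generator $\M^\pm_\bm(g)$ of $\cA(\lambda',\mu)$ with $\mathbf{0}\leq\bm\leq\bv'$ equals the image of $\M^\pm_\bm(\hat g)$ for any lift $\hat g\in\Lambda^\bv_\bm$ of $g$ under the evident section of \eqref{eq:d:103}. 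Taking $\Spec$ gives the closed embedding $\overline{\cW}^{\lambda'}_\mu\hookrightarrow\overline{\cW}^\lambda_\mu$. The main obstacle is the final bookkeeping in the third step: one must track precisely how the partially symmetric dressing $f$ is transported by the Levi shuffle, how the cone-point specialization combines with the Weyl sum to produce exactly $\tilde f$, and how the two Fourier signs, the $(-1)^{\mathrm{sign}}$ of \eqref{eq:d:108}, and the various root- and weight-products collapse so as to reproduce the rational expressions \eqref{eq:positive-FMOs-images-under-GKLO} and \eqref{eq:negative-FMOs-images-under-GKLO} for $\M^\pm_\bm(\tilde f)$; the rest is formal.
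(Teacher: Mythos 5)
Your proposal is correct and follows essentially the same route as the paper's proof: trace the minuscule monopole operators through the Levi restriction of Proposition \ref{prop:levi-restriction}, the cone-point surjection $\pi$ of Proposition \ref{prop:killing-cone}, the two Fourier transforms and the forgetting-matter map, and then invoke generation by FMOs to obtain regularity, uniqueness, surjectivity, and the restriction formula. The sign and dressing bookkeeping you defer at the end is precisely what the paper carries out in \eqref{eq:d:93}--\eqref{eq:d:102}, and it closes exactly as you predict.
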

\begin{proof}

  Let $\bm = (\sm_i)_{i \in I} \in \ZZ^I$ with $\mathbf{0} \leq \bm \leq \bv$, and let $f \in \Lambda^\bv_\bm$. Recall from \eqref{eq:d:107} that we have:
  \begin{equation}
    \label{eq:d:93}
    \M_\bm^{+}(f) = \M_{\varpi_\bm}^{\bG,\bN}(f) 
  \end{equation}
  By Propositions \ref{prop:levi-restriction} and \ref{prop:killing-cone}, we have:
  \begin{equation}
    \label{eq:d:94}
   \pi \circ \iota_*^{-1} \left( \M_{\varpi_\bm}^{\bG,\bN}(f) \right)  = \M_{\varpi_\bm}^{\bG',\bNprimew \oplus \bNmix} \left(\frac{\tilde{f}}{ \prod_{i\in I} \prod_{p = 1}^{m_i} w_{i,p}^{\sv_i''} } \right) 
  \end{equation}
  if $\bm \leq \bv'$. If not, we get zero.

  All the weights of $\bNmixone$ pair negatively with $\varpi_{\bm}$, so the first Fourier transform acts by the identity, see \eqref{eq: FT for MMOs}. However, the second Fourier transform introduces a sign, sending \eqref{eq:d:94} to:
  \begin{equation}
    \label{eq:d:95}
    (-1)^{ \sum_{i \in I} \sm_i \sv_i''}\M_{\varpi_\bm}^{\bG',\bNthree \oplus \bNfourp \oplus (\bNfourm)^* }\left(\frac{\tilde{f}}{ \prod_{i\in I} \prod_{p = 1}^{m_i} w_{i,p}^{\sv_i''} } \right) 
  \end{equation}
Finally, $z^*$ sends this to:
\begin{equation}
  \label{eq:d:96}
    (-1)^{ \sum_{i \in I} \sm_i \sv_i''} \cdot \M_{\varpi_\bm}^{\bG',\bNthree}\left(\prod_{i\in I} \prod_{r=1}^{\sm_i} (-w_{i,r})^{\sv_i''} \cdot \frac{\tilde{f}}{ \prod_{i\in I} \prod_{p = 1}^{m_i} w_{i,p}^{\sv_i''} } \right) 
\end{equation}
Observe that the signs cancels, and this is exactly equal  to
\begin{equation}
  \label{eq:d:97}
    \M_\bm^{+}(\tilde{f}) = \M_{\varpi_\bm}^{\bG',\bNthree}(\tilde{f}) 
\end{equation}

Now we turn to the negative FMOs. Analogously to before we have:
  \begin{equation}
      \label{eq:d:98}
   \pi \circ \iota_*^{-1} \left( \M_{-\varpi_\bm}^{\bG,\bN}(f) \right)  = \M_{\varpi_\bm}^{\bG',\bNprimew \oplus \bNmix}\left(\frac{\tilde{f}}{ \prod_{i\in I} \prod_{p = 1}^{m_i} (-w_{i,p})^{\sv_i''} } \right) 
  \end{equation}
  if $\sm_i \leq \sv_i'$ for all $i \in I$, and zero otherwise.
This time the first Fourier transform introduces a sign sending \eqref{eq:d:98} to
  \begin{equation}
    \label{eq:d:99}
   (-1)^{\sum_{a \in E} \sm_{\source(a)} \sv_{\target(a)}''} \cdot  \M_{\varpi_\bm}^{\bG',\bNthree\oplus\bNfour}\left(\frac{\tilde{f}}{ \prod_{i\in I} \prod_{p = 1}^{m_i} (-w_{i,p})^{\sv_i''} } \right) 
  \end{equation}
and the second Fourier transform acts by the identity.
Finally, $z^*$ sends this to:
\begin{equation}
  \label{eq:d:100}
   (-1)^{\sum_{a \in E} \sm_{\source(a)} \sv_{\target(a)}''} \cdot  \M_{\varpi_\bm}^{\bG',\bNthree}\left( \prod_{i\in I} \prod_{r=1}^{\sm_i} (w_{i,r})^{\sv_i''} \cdot \frac{\tilde{f}}{ \prod_{i\in I} \prod_{p = 1}^{m_i} (-w_{i,p})^{\sv_i''} } \right) 
\end{equation}
The signs do not cancel, and we get:
\begin{equation}
  \label{eq:d:101}
 (-1)^{\sum_{i\in I} \sm_i \sv_i'' + \sum_{a \in E} \sm_{\source(a)} \sv_{\target(a)}''} \cdot \M_{-\varpi_\bm}^{\bG',\bNthree}(\tilde{f})  
\end{equation}

Recall \eqref{eq:negative-FMOs-images-under-GKLO} that the negative FMO $\M_{\bm}^{-}(f)$ is defined as a sign times $\M_{-\varpi_\bm}^{\bG',\bNthree}(f)$.  In particular, the sign appearing \eqref{eq:d:101} exactly compensates for this sign, and we conclude that  
\begin{equation}
  \label{eq:d:102}
\M_{\bm}^{-}(f) \mapsto \M_{\bm}^{-}(\tilde{f})
\end{equation}

As both source and target of the left vertical arrow in \eqref{eq: big tikz diagram} are generated by FMOs, we conclude that the map is regular and surjective. 

\end{proof}

\begin{Remark}
  We do not know \emph{a priori} that this closed embedding is the same as the closed embedding \eqref{eq:d:10} defined group-theoretically for finite-type affine Grassmannian slices. Rather, we only know this \emph{a posteriori} because both maps have the same effect on FMOs. In particular, Theorem \ref{thm:big-tikz-diagram-defining-closed-embedding} does not subsume Theorem \ref{thm:compatibility-of-FMOs-with-embeddings-of-slices}.
\end{Remark}

\subsection{Poisson structure}
\label{subsection: Poisson structure}

Coulomb branch coordinate rings each come equipped with a one-parameter non-commutative deformation quantization, see \cite[\S 3(iv)]{BFN1}. Geometrically, this extra parameter $\hbar$ corresponds to considering equivariance under an additional $\CC^\times$ corresponding to ``loop rotation'', and we obtain an algebra over $\kk[\hbar] = H_{\CC^\times}^\bullet(pt)$. This deformation quantization endows the commutative Coulomb branch with a Poisson structure. So we might ask when the closed embedding constructed in Theorem \ref{thm:closed-embedding-of-KM-slices} respects the Poisson structure. To answer this question, we need the following strengthening of the conicity condition. 

\begin{Definition}
  \label{def:good-coweights}
  Let $\lambda, \lambda' \in \coweightlattice_{++}$ with $\lambda \geq \lambda'$. Define $\bw$ as above and $\bv'' = (\sv''_i)_{i \in I}$ by:
  \begin{equation}
    \label{eq:d:62}
 \sv''_i = \langle \lambda - \lambda', \fundweight_i \rangle   
  \end{equation}
  We say that $\lambda'$ satisfies the \emph{good condition} for $\lambda$ if 
  \begin{equation}
    \label{eq:d:63}
     \bu \cdot (\bw-C\bv'')  + \bu \cdot ( C \bu ) \geq 2 
  \end{equation}
  for all non-zero  $\bu$  with  $\mathbf{0} \leq \bu \leq \bv''$. Here $\cdot$ denotes the usual dot product on $\ZZ^{I}$. 
\end{Definition}

\begin{Theorem}
  \label{thm:closed-poisson-embedding-of-KM-slices}
  Let $\lambda, \lambda' \in \coweightlattice_{++}$ and $\mu \in \coweightlattice$ with $\lambda \geq \lambda' \geq \mu$. Further assume that $\lambda'$ satisfies the good condition for $\lambda$. Then the closed embedding
  \begin{equation}
    \label{eq:d:64}
 \overline{\cW}^{\lambda'}_\mu \hookrightarrow  \overline{\cW}^\lambda_\mu
  \end{equation}
  is compatible with Poisson structures.
\end{Theorem}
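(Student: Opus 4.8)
The plan is to check that each arrow in the diagram~\eqref{eq: big tikz diagram} of Theorem~\ref{thm:big-tikz-diagram-defining-closed-embedding} is a homomorphism of Poisson algebras, so that the composite $\cA(\lambda,\mu)\twoheadrightarrow\cA(\lambda',\mu)$ defining the embedding is Poisson. All the arrows except $\pi$ --- the Levi restriction $\iota_\ast^{-1}$, the Fourier transforms $\cF$, the forgetting matter maps $\bz^\ast$, the K\"unneth isomorphism~\eqref{eq:d:72}, and the localizations at the roots $\beta$ and the $w_{i,r}$ --- lift to the $\hbar$-deformation quantizations of \cite{BFN1} (the localizations trivially, a Poisson bracket extending uniquely to any localization), hence are automatically Poisson. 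The only map that does not lift to quantization in general (cf.\ Remark~\ref{rem:poisson-subvariety-does-not-quantize}) is $\pi$ (Proposition~\ref{prop:killing-cone}), and its only non-formal ingredient is the cone-killing homomorphism $\epsilon\colon\cA(\bG'',\bNprimeprimew)\twoheadrightarrow\kk$ of~\eqref{eq:d:71}. So the problem reduces to showing that $\epsilon$ is Poisson, i.e.\ that $\fm:=\ker\epsilon$ is a Poisson ideal of $\cA(\bG'',\bNprimeprimew)$.

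This is the crux, and the only place the \emph{good} condition is used rather than just conicity. On $\cA(\bG'',\bNprimeprimew)=\cA(\lambda,\lambda')$ we have the $\ZZ_{\geq 0}$-grading from the monopole formula (Appendix~\ref{sec:appendix}); the deformation quantization is graded with $\hbar$ in degree $2$, so the Poisson bracket has degree $-2$: $\{\cA_a,\cA_b\}\subseteq\cA_{a+b-2}$. By~\eqref{eq:d:106} the map $\epsilon$ kills every non-constant FMO, and the FMOs generate (Theorem~\ref{thm: FMOs generate for slice}), so $\fm=\bigoplus_{d>0}\cA_d$. Now the inequality in Definition~\ref{def:good-coweights} is precisely the statement that each generating FMO $\M^\pm_\bm(f)$, $\bm\neq\mathbf{0}$, has degree $\geq 2$ --- the quantity $\bu\cdot(\bw-C\bv'')+\bu\cdot(C\bu)$ is twice the conformal dimension of the coweight $\varpi_\bu$ of $\bG''$, i.e.\ the monopole-formula degree of $\M^\pm_\bu(1)$, and dressings only raise degree --- while the FMOs with $\bm=\mathbf{0}$ are polynomials in the $w_{i,r}$ and so sit in degrees $0$ and $\geq 2$ as well. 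Hence $\cA(\bG'',\bNprimeprimew)$ has no elements of degree $1$, so $\fm=\bigoplus_{d\geq 2}\cA_d$, and therefore $\{\fm,\fm\}\subseteq\bigoplus_{d\geq 2}\cA_d=\fm$. As $\{\fm,\kk\}=0$ trivially, $\fm$ is a Poisson ideal and $\epsilon$ is Poisson.

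It remains to feed this back into $\pi$. The K\"unneth isomorphism~\eqref{eq:d:72} identifies $\cA(\bG'\times\bG'',\bNprimew\oplus\bNprimeprimew)$ with the product Poisson algebra $\cA(\bG',\bNprimew)\otimes\cA(\bG'',\bNprimeprimew)$ (it lifts to the quantization by \cite{BFN1}), so the bottom row~\eqref{eq:d:73} of the square~\eqref{eq:d:74}, namely $\mathrm{id}\otimes\epsilon$, is Poisson: for a product Poisson algebra one computes $\mathrm{id}\otimes\epsilon$ on $\{a\otimes b,a'\otimes b'\}$ using $\epsilon(bb')=\epsilon(b)\epsilon(b')$ and $\epsilon(\{b,b'\})=0$. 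Since the two vertical arrows of~\eqref{eq:d:74} are injective and Poisson (forgetting matter), the characterization of $\pi$ by that square forces $\pi$ to be Poisson. Thus every arrow of~\eqref{eq: big tikz diagram} is Poisson, the composite $\cA(\lambda,\mu)\twoheadrightarrow\cA(\lambda',\mu)$ is a surjection of Poisson algebras, and by Theorem~\ref{thm:big-tikz-diagram-defining-closed-embedding} this composite is the map defining~\eqref{eq:d:64}; so the closed embedding is compatible with the Poisson structures. The symplectic-leaf assertion of Theorem~\ref{thm:poisson-subvariety-intro} then follows as in \S\ref{subsection: Poisson structure} from \cite{Kaledin} and \cite{Weekes2}.

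The main obstacle is the second paragraph: showing $\epsilon$ is Poisson. This is where the strengthening from the conicity inequality ($\geq 1$) to the good one ($\geq 2$) is genuinely needed --- it is exactly what forbids generating FMOs of degree $1$, whose Poisson brackets could otherwise have a nonzero constant term. The hypothesis is sharp in this respect, as the affine type $A$ level-one case discussed in \S\ref{sec: affine type A} shows. One also has to be careful that the grading in play is the one for which $\hbar$ sits in degree $2$; granting that, the rest is diagram-chasing in~\eqref{eq: big tikz diagram}.
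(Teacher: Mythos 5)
Your proposal is correct and takes essentially the same route as the paper: the paper likewise reduces to checking that every arrow in \eqref{eq: big tikz diagram} is Poisson, invokes \cite{BFN1} for all maps except $\pi$, and handles $\pi$ via the good condition, which (as in Corollary \ref{appendix:main result} and Lemma \ref{lemma: first lemma appendix}) forces the monopole-formula grading to have no degree-one part so that the irrelevant ideal is Poisson for the degree $-2$ bracket and the cone-point map \eqref{eq:d:71} is Poisson. Your write-up simply spells out in more detail what the paper delegates to the appendix and to the defining square \eqref{eq:d:74}.
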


\begin{Remark}
  \label{rem:poisson-subvariety-does-not-quantize}
One might ask if Theorem \ref{thm:closed-poisson-embedding-of-KM-slices} holds for the quantized Coulomb branches. Remarkably, the answer is no! The simplest example is the case of the $A_1$ quiver, $\lambda = \simplecoroot$, and $\lambda' = 0$. In this case, $\overline{\cW}^\lambda_\mu$ is the nilpotent cone for $\SL_2$, and $\overline{\cW}^{\lambda'}_\mu$ is a point. The map \eqref{eq:d:64} quantizes precisely if the Coulomb branch quantization of $\overline{\cW}^\lambda_\mu$ has a non-trivial one-dimensional module, and this property fails in this example.
  
More precisely, let us work over $\kk = \CC$. The Coulomb branch quantization $\cA_\hbar(\lambda,\mu)$ is an algebra  over $\CC[\hbar]$, and we seek a surjection $\cA_\hbar(\lambda,\mu) \twoheadrightarrow \cA_\hbar(\lambda',\mu) = \CC[\hbar]$ over $\CC[\hbar]$. Specializing at $\hbar =1$, we would obtain a non-trivial one-dimensional module for the algebra $\cA_{\hbar=1}(\lambda,\mu)$.  In our example, one can check that the Coulomb branch quantization $\cA_{\hbar=1}(\lambda,\mu)$ is a particular central quotient of $U(\fsl_2)$ (corresponding to $-\rho$), which does not have a one-dimensional module.   (Of course, there does exist a quantization possessing a one-dimensional module, but this corresponds to a different central quotient of $U(\fsl_2)$.) This is a fundamental reason why the construction of Theorem \ref{thm:closed-embedding-of-KM-slices} is necessarily complicated: any simple geometric construction would also make sense for the quantization. It is all the more remarkable that we obtain such a nice formula for the restriction of the geometrically defined FMOs.
\end{Remark}

\begin{proof}[Proof of Theorem \ref{thm:closed-poisson-embedding-of-KM-slices}]
It suffices to check that the top horizontal map and all the right vertical maps in \eqref{eq: big tikz diagram} are Poisson. All the maps except $\pi$ are Poisson by the results of \cite{BFN1}. As for $\pi$, using Corollary \ref{appendix:main result}  the good condition implies that $\pi$ is Poisson.
\end{proof}

\subsection{Adding defect map and closed embeddings for zastava spaces}

We do not have an \emph{a priori} definition of adding defect like Definition \ref{def:adding-defect}. Instead, we first define the map \eqref{eq:d:22} on localized rings by the exact same formulas. Now we claim that there is a (necessarily uniquely determined) algebra homomorphism $\kk[\zastavap^{\gamma}] \rightarrow \kk[ \zastavap^{\gamma'} \times \AA^{(\gamma'')}]$ making the square \eqref{eq:d:24} commute. We prove this by checking that the FMOs, which generate $\kk[\zastavap^{\gamma}]$, map to elements of $\kk[ \zastavap^{\gamma'} \times \AA^{(\gamma'')}]$ under the map \eqref{eq:d:22} of localized rings: this is exactly what we computed in the proof of Theorem \ref{thm:compatibility-of-FMOs-with-adding-defect-map-of-zastava}.

As in the discussion after Theorem \ref{thm:compatibility-of-FMOs-with-adding-defect-map-of-zastava}, we can restrict the adding defect map along the unique colored divisor supported at $0$. This gives us a map of zastava spaces satisfying Theorem \ref{thm:compatibility-of-FMOs-with-embeddings-of-zastava}. Therefore the map on coordinate rings is surjective, and the map of zastava spaces is a closed embedding. We see that it is much easier to construct the closed embedding for zastava spaces than for Kac-Moody affine Grassmannian slices.

%%% Local Variables:
%%% mode: latex
%%% TeX-master: "main"
%%% End:

\appendix
\section{Good and ugly conditions}
\label{sec:appendix}

Cremonesi, Hanany and Zaffaroni \cite{CHZ} proposed a certain formal infinite sum called the \emph{monopole formula}, which encodes a $\ZZ$--grading on the coordinate ring of a Coulomb branch.  We will  briefly recall the relevant details, before turning to the case of quiver gauge theories.

As in \S\ref{subsection: Coulomb branches in general}, we consider the Coulomb branch $\cM_C(\bG,\bN)$ associated to a pair $(\bG,\bN)$ of a reductive group $\bG$ and its representation $\bN$.  For any coweight $\gamma$ of $\bG$, define:
\begin{equation}
\label{appendix 1}
\Delta(\gamma) \ = \ -\sum_{\alpha} |\langle \alpha, \gamma\rangle | + \tfrac{1}{2} \sum_\xi |\langle \xi, \gamma\rangle| \dim \bN(\xi)
\end{equation}
The first sum runs over the positive roots $\alpha$ of $\bG$, while the second  runs over the weights $\xi$ of $\bN$.

Recall that following Braverman, Finkelberg and Nakajima, we define $\cM_C(\bG,\bN)$ as the spectrum of the ring $\cA(\bG,\bN)$ from (\ref{eq:d:51}).  In \cite[\S 2(iii)]{BFN1}, these authors prove that $\cA(\bG,\bN)$ carries an integer grading whose Hilbert series is given by the monopole formula:  
\begin{equation}
\sum_{\gamma} t^{2 \Delta(\gamma)} P_{\bG,\gamma}(t)
\end{equation}
Here the sum runs over dominant coweights $\gamma$ of $\bG$, and $P_{\bG,\gamma}(t) = \prod_i (1-t^{2d_i})^{-1}$ where the $d_i$ are the exponents of the subgroup $\Stab_{t^\gamma} \subseteq \bG$ defined as in \S \ref{subsection: Minuscule monopole operator}. Importantly for us, the degrees of dressed minuscule monopole operators from \S\ref{subsection: Minuscule monopole operator} are given by
\begin{equation}
\label{appendix 3}
\deg M_\gamma^{\bG,\bN}(f) \ = \ 2 \Delta(\gamma) + \deg f,
\end{equation}
Here $\deg f$ denotes the cohomological degree of $f \in H_\bullet^{\Stab_{t^{\gamma}}}(\pt)$, and in particular $\deg f \geq 0$.

Following the terminology of \cite{GW}, we say that the pair $(\bG,\bN)$ is:
\begin{itemize}
\item \emph{good} if $2\Delta(\gamma) > 1$ for all coweights $\gamma$ of $\bG$;
\item \emph{ugly} if $2 \Delta(\gamma) \geq 1$ for all coweights $\gamma$, and this bound is attained by some $\gamma$.
\end{itemize}
As follows immediately from the monopole formula: the pair $(\bG,\bN)$ is good or ugly if and only if the grading on $\cA(\bG,\bN)$ is \emph{conical}.  Recall that this means that the grading on $\cA(\bG,\bN)$ is only in non-negative degrees, and the degree zero piece consists solely of the ground field $\kk$.  In this case, there is a distinguished cone point $o\in \cM_C(\bG,\bN)$, given by the vanishing of the irrelevant ideal of $\cA(\bG,\bN)$ (generated by elements of degree $\geq 1$).

Finally, recall from \S\ref{subsection: Poisson structure} that $\cA(\bG,\bN)$ carries a Poisson structure.  This structure naturally has degree $-2$, as it is induced by a deformation quantization over the cohomology ring $H_{\CC^\times}^\bullet(pt) =\kk[\hbar] $, where $\hbar$ has degree 2.  Now, when $(\bG,\bN)$ is good, it follows from the monopole formula that not only is $\cA(\bG,\bN)$ conical, but it has no elements of degree one.  The irrelevant ideal is therefore actually generated by elements of degree $\geq 2$, and so is easily seen to be Poisson.  In particular, in this case the corresponding map $\cA(\bG,\bN) \twoheadrightarrow \kk$ is Poisson.

Summarizing the above discussion:
\begin{Lemma}
\label{lemma: first lemma appendix}
If $(\bG,\bN)$ is good or ugly, then $\cM_C(\bG,\bN)$ is conical.  Moreover, if $(\bG,\bN)$ is good, then the cone point $o \in \cM_C(\bG,\bN)$ is a symplectic leaf.
\end{Lemma}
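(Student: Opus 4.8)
The plan is that both statements are immediate consequences of the monopole formula and the facts recalled just above, so the proof will be short. For the first statement, recall from \cite[\S 2(iii)]{BFN1} that the Hilbert series of the $\ZZ$-grading on $\cA(\bG,\bN)$ equals $\sum_\gamma t^{2\Delta(\gamma)}P_{\bG,\gamma}(t)$, the sum over dominant coweights $\gamma$ of $\bG$, where each $P_{\bG,\gamma}(t)=\prod_i(1-t^{2d_i})^{-1}$ is a power series in $t^2$ with non-negative integer coefficients and constant term $1$. If $(\bG,\bN)$ is good or ugly then $2\Delta(\gamma)\geq 1$ for every nonzero $\gamma$, while $\Delta(0)=0$; hence each summand with $\gamma\neq 0$ is supported in degrees $\geq 1$, and the $\gamma=0$ summand $P_{\bG,0}(t)$ contributes exactly $1$ in degree $0$ and otherwise only in positive (even) degrees. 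Thus $\cA(\bG,\bN)$ is supported in non-negative degrees with one-dimensional degree-zero part, i.e. it is conical, and $\cM_C(\bG,\bN)$ is a cone with vertex $o$ cut out by the irrelevant ideal $\fm_o=\bigoplus_{d\geq 1}\cA(\bG,\bN)_d$.

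Next, assume $(\bG,\bN)$ is good, so that $2\Delta(\gamma)>1$ — hence, being an integer, $2\Delta(\gamma)\geq 2$ — for every nonzero $\gamma$. In the Hilbert series every summand with $\gamma\neq 0$ is then supported in degrees $\geq 2$, while the $\gamma=0$ summand $P_{\bG,0}(t)$ is supported in degree $0$ and in even degrees $\geq 2$; therefore the coefficient of $t^1$ vanishes, i.e. $\cA(\bG,\bN)_1=0$ and $\fm_o=\bigoplus_{d\geq 2}\cA(\bG,\bN)_d$. By \cite[\S 3(iv)]{BFN1} the Poisson bracket on $\cA(\bG,\bN)$ is induced by the deformation quantization over $\kk[\hbar]$ with $\deg\hbar=2$, so it lowers degree by $2$: $\{\cA(\bG,\bN)_e,\cA(\bG,\bN)_d\}\subseteq\cA(\bG,\bN)_{e+d-2}$. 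Given homogeneous $a\in\cA(\bG,\bN)$ and homogeneous $b\in\fm_o$, so $\deg b\geq 2$: if $\deg a=0$ then $\{a,b\}=0$; if $\deg a\geq 2$ then $\{a,b\}\in\cA(\bG,\bN)_{\geq 2}=\fm_o$; and the case $\deg a=1$ does not occur. Hence $\fm_o$ is a Poisson ideal, equivalently every Hamiltonian vector field vanishes at $o$, so the symplectic leaf of $\cM_C(\bG,\bN)$ through $o$ is the point $\{o\}$ itself. (In the quiver gauge theory case one may argue alternatively: $\{o\}$ is a closed Poisson subvariety of $\cM_C(\bG,\bN)$, which has symplectic singularities by \cite{Weekes2}, so by \cite{Kaledin} the symplectic leaves are the smooth loci of the irreducible closed Poisson subvarieties, and $\{o\}$ is its own smooth locus.)

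I do not anticipate a genuine obstacle; the argument is elementary bookkeeping with the monopole formula. The only points requiring a little care are that each $P_{\bG,\gamma}(t)$ is a power series in $t^2$, that $2\Delta(\gamma)$ is always an integer, and that in the good case $2\Delta(\gamma)>1$ forces $2\Delta(\gamma)\geq 2$; these three observations are exactly what pin down the degree-zero part of $\cA(\bG,\bN)$ (and, in the good case, the absence of a degree-one part), and hence give both the conicity and the Poisson-ideal property of $\fm_o$.
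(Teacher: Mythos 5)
Your proposal is correct and follows essentially the same route as the paper: conicity is read off from the monopole formula (the $\gamma=0$ term gives the one-dimensional degree-zero piece, all other terms sit in degree $\geq 1$), and in the good case the absence of degree-one elements together with the degree $-2$ Poisson bracket shows the irrelevant ideal is Poisson, so the cone point is a leaf. Your explicit bookkeeping (integrality of $2\Delta(\gamma)$, $P_{\bG,\gamma}$ being a series in $t^2$ with constant term $1$, and the implicit restriction to nonzero $\gamma$ in the good/ugly conditions) just spells out what the paper leaves as "follows immediately from the monopole formula."
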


%%%	
\subsection{The case of quivers}
\label{sec:appendix2}
Consider now the case of a quiver gauge theory defined as in \S\ref{sec:qgt}.  Given $\lambda \in \coweightlattice_{++}$ and $\mu \in \coweightlattice$  with $\lambda \geq \mu$, recall that we associate $\bw,\bv$ by equations (\ref{eq:d:104}) and (\ref{eq:d:104b}), and thus associate $(\bG,\bN)$ as in \eqref{eq:bG-and-bN-for-quiver-gauge-theory} and \eqref{eq:bG-and-bN-for-quiver-gauge-theoryb}. 

Recall that the algebra $\cA(\lambda,\mu) = \cA(\bG,\bN) $ is generated by the FMOs $\M_{\bm}^{\pm}(f) $ where $\mathbf{0}\leq \bm\leq \bv$, see \S \ref{sec: FMOs}. Since these FMOs are precisely the dressed minuscule monopole operators for the minuscule coweights $\pm \varpi_\bm$ of $\bG$, their degrees are given by the formula (\ref{appendix 3}).  Since these elements generate the algebra, to verify whether $(\bG,\bN)$ is good or ugly, it suffices to the check that $2 \Delta(\pm \varpi_\bm) \geq 2$ for all non-zero $\mathbf{0} \leq \bm \leq \bv$ (resp.~$2\Delta(\pm \varpi_\bm) \geq 1$ and this bound is saturated).

\begin{Lemma}
For any $\mathbf{0} \leq \bm \leq \bv$, we have:
$$
2\Delta( \pm \varpi_\bm) =  \ \bm \cdot (\bw - C \bv) + \bm \cdot (C \bm)
$$
\end{Lemma}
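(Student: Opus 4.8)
The plan is to compute $2\Delta(\varpi_\bm)$ directly from the definition of $\Delta(\gamma)$ and then match the resulting polynomial in $\bm,\bv,\bw$ with $\bm\cdot(\bw-C\bv)+\bm\cdot(C\bm)$ by expanding the Cartan matrix. First I would observe that $\Delta$ depends on a coweight only through the absolute values $|\langle\cdot,\gamma\rangle|$, so $\Delta(\varpi_\bm)=\Delta(-\varpi_\bm)$ and it suffices to treat $+\varpi_\bm$; this disposes of the $\pm$ uniformly. Fix the standard characters $\chi_{i,r}$ ($i\in I$, $r\in[\sv_i]$) of the maximal torus $\bT$, so that $\langle\chi_{i,r},\varpi_\bm\rangle=1$ if $r\le\sm_i$ and $0$ otherwise.

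Next I would compute the two contributions to $\Delta(\varpi_\bm)$ separately. For the roots: the positive roots of $\bG=\prod_{i\in I}\GL(V_i)$ are $\chi_{i,r}-\chi_{i,s}$ with $r<s$, and the pairing with $\varpi_\bm$ is nonzero exactly when $r\le\sm_i<s$, so there are $\sm_i(\sv_i-\sm_i)$ such roots at vertex $i$ and the root term equals $-\sum_{i\in I}\sm_i(\sv_i-\sm_i)$. For the matter, split $\bN=\bigoplus_{a\in E}\Hom(V_{\source(a)},V_{\target(a)})\oplus\bigoplus_{i\in I}\Hom(W_i,V_i)$. For an arrow $a$, the summand $\Hom(V_{\source(a)},V_{\target(a)})$ has weights $\chi_{\target(a),t}-\chi_{\source(a),r}$ each with multiplicity one, whose pairing with $\varpi_\bm$ is nonzero exactly when precisely one of $t\le\sm_{\target(a)}$, $r\le\sm_{\source(a)}$ holds, giving $\sm_{\source(a)}\sv_{\target(a)}+\sm_{\target(a)}\sv_{\source(a)}-2\sm_{\source(a)}\sm_{\target(a)}$ nonzero weights; the framing summand $\Hom(W_i,V_i)\cong V_i^{\oplus\sw_i}$ has weights $\chi_{i,r}$ with multiplicity $\sw_i$, contributing $\sm_i\sw_i$. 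Combining with the factor $\tfrac12$,
$$2\Delta(\varpi_\bm)=-2\sum_{i\in I}\sm_i(\sv_i-\sm_i)+\sum_{a\in E}\big(\sm_{\source(a)}\sv_{\target(a)}+\sm_{\target(a)}\sv_{\source(a)}-2\sm_{\source(a)}\sm_{\target(a)}\big)+\sum_{i\in I}\sm_i\sw_i.$$

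Finally I would write the symmetric Cartan matrix as $C_{ij}=2\delta_{ij}-n_{ij}$, where $n_{ij}$ is the number of edges between $i$ and $j$ (so $n_{ii}=0$, since the quiver has no edge loops), and expand
$$\bm\cdot(C\bv)=2\sum_{i\in I}\sm_i\sv_i-\sum_{a\in E}\big(\sm_{\source(a)}\sv_{\target(a)}+\sm_{\target(a)}\sv_{\source(a)}\big),\qquad \bm\cdot(C\bm)=2\sum_{i\in I}\sm_i^2-2\sum_{a\in E}\sm_{\source(a)}\sm_{\target(a)},$$
using that $\sum_{i,j}n_{ij}x_iy_j=\sum_{a\in E}(x_{\source(a)}y_{\target(a)}+x_{\target(a)}y_{\source(a)})$; together with $\bm\cdot\bw=\sum_{i\in I}\sm_i\sw_i$ this gives exactly the displayed formula for $2\Delta(\varpi_\bm)$.

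There is no real conceptual obstacle: the argument is pure bookkeeping. The only points requiring care are the two combinatorial counts — in particular the ``exactly one small index'' count for the arrow weights — and keeping track of the no-edge-loop hypothesis, which is what forces $n_{ii}=0$ and ensures the internal $\GL(V_i)$-roots and the (absent) $i$-to-$i$ arrows do not contribute spuriously.
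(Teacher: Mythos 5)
Your computation is correct and follows essentially the same route as the paper: reduce to $+\varpi_\bm$ via $\Delta(\gamma)=\Delta(-\gamma)$, count the root contribution $\sum_i \sm_i(\sv_i-\sm_i)$ and the matter contributions $\sum_i \sm_i\sw_i$ and $\sum_{a\in E}\bigl(\sm_{\source(a)}(\sv_{\target(a)}-\sm_{\target(a)})+\sm_{\target(a)}(\sv_{\source(a)}-\sm_{\source(a)})\bigr)$, then rearrange to match $\bm\cdot(\bw-C\bv)+\bm\cdot(C\bm)$. The only difference is that you spell out the final rearrangement via $C_{ij}=2\delta_{ij}-n_{ij}$, which the paper leaves as an easy verification.
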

This lemma is based on calculations from \cite[\S 5]{NakQuestions}.  We include the brief proof.
\begin{proof}
By definition $\Delta(\gamma) = \Delta(- \gamma)$ for any coweight $\gamma$, so we focus on  $\gamma = \varpi_\bm$.

Let us consider the formula (\ref{appendix 1}) for $\Delta(\varpi_\bm)$. Recall that $\varpi_\bm$ is the sum over $i$ of the $\sm_i$-th fundamental coweights $(1,\ldots,1,0,\ldots,0)$.   Working one $\GL(V_i)$ at a time, it is easy to see that the sum over positive roots $\alpha$ of $\prod_i \GL(V_i)$ is given by
$$
\sum_\alpha | \langle \alpha, \varpi_\bm \rangle | \ = \ \sum_i \sm_i (\bv_i - \sm_i)
$$
Next, the weights of $\bN$ are of two types: for each $i\in I$ there are the weights of  $ \Hom(W_i, V_i)$, and for each  edge $a \in E$ there are the weights of $\Hom(V_{\source(a)}, V_{\target(a)})$.  It is not hard to see that:
$$
\sum_\xi |\langle \xi, \varpi_\bm\rangle | \dim \bN(\xi) \ = \ \sum_{i\in I} \sm_i \sw_i + \sum_{a \in E}  \big( \sm_{\target(a)} (\sv_{\source(a)} - \sm_{\source(a)}) + \sm_{\source(a)} (\sv_{\target(a)} - \sm_{\target(a)})   \big)
$$
Substituting into the formula \eqref{appendix 1},  we therefore have
\begin{align*}
2 \Delta(\varpi_\bm) & = - 2 \sum_i \sm_i (\sv_i - \sm_i) + \sum_{i\in I} \sm_i \sw_i + \sum_{a \in E}  \big( \sm_{\target(a)} (\sv_{\source(a)} - \sm_{\source(a)}) + \sm_{\source(a)} (\bv_{\target(a)}- \sm_{\target(a)})   \big)
\end{align*}
which in turn can be easily rearranged to give:
$$
\sum_i \sm_i \sw_i  - 2 \sum_i \sm_i \sv_i + \sum_{a \in E} (\sm_{\source(a)} \sv_{\target(a)} + \sm_{\target(a)} \sv_{\source(a)}) + 2 \sum_i \sm_i \sm_i - \sum_{a \in E}  (\sm_{\source(a)}\sm_{\target(a)} + \sm_{\target(a)} \sm_{\source(a)})
$$
Observe that this sum does not depend on the orientation of our quiver, and is exactly $\bm \cdot (\bw - C \bv) + \bm \cdot (C \bm)$.
\end{proof}
Summarizing the above discussion, we can finally explain our previous definitions of conicity (Definition \ref{def:conical-coweights}) and goodness (Definition \ref{def:good-coweights}):
\begin{Corollary}
\label{appendix:main result}
If $\mu$ satisfies the conicity condition for $\lambda$ (Definition \ref{def:conical-coweights}), then $(\bG,\bN)$ is good or ugly, and in particular $\cA(\lambda,\mu)$ is conical.  Moreover, if $\mu$ satisfies the good condition for $\lambda$ (Definition \ref{def:good-coweights}), then $(\bG,\bN)$ is good and the map $\cA(\lambda,\mu) \twoheadrightarrow \kk$ is Poisson.
\end{Corollary}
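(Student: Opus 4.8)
The plan is to obtain the Corollary as a direct consequence of the preceding Lemma, which evaluates $2\Delta(\pm\varpi_\bm)$, combined with the observation recorded just above that Lemma: the generators of $\cA(\lambda,\mu) = \cA(\bG,\bN)$ are the dressed minuscule monopole operators $\M^{\bG,\bN}_{\pm\varpi_\bm}(f)$, whose degrees are $2\Delta(\pm\varpi_\bm) + \deg f$ with $\deg f \geq 0$, so that conicity and goodness of $(\bG,\bN)$ can be tested on these generators alone.

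First I would unwind the hypotheses. Applying Definition \ref{def:conical-coweights} with $\lambda'$ taken to be $\mu$ gives $\sv''_i = \langle\lambda-\mu,\fundweight_i\rangle = \sv_i$, i.e.\ $\bv'' = \bv$; thus the conicity condition for $\mu$ with respect to $\lambda$ is precisely the inequality $\bu\cdot(\bw - C\bv) + \bu\cdot(C\bu) \geq 1$ for all non-zero $\mathbf{0}\leq\bu\leq\bv$, and likewise the good condition of Definition \ref{def:good-coweights} is the same inequality with right-hand side $2$. By the Lemma these say exactly that $2\Delta(\varpi_\bm) = 2\Delta(-\varpi_\bm) \geq 1$ (resp.\ $\geq 2$) for every non-zero $\bm$ with $\mathbf{0}\leq\bm\leq\bv$.

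Then I would feed this into the degree formula (\ref{appendix 3}). Under the conicity condition, every generator $\M^\pm_\bm(f)$ with $\bm \neq \mathbf{0}$ has degree $2\Delta(\pm\varpi_\bm) + \deg f \geq 1$, while the generators with $\bm = \mathbf{0}$ are the elementary symmetric polynomials in the $w_{i,r}$, which have degree $2$; hence every non-constant generator of $\cA(\lambda,\mu)$ has strictly positive degree, so the monopole grading is conical. By the equivalence recalled in the appendix, conicity of the grading is the same as $(\bG,\bN)$ being good or ugly, and conicity of $\overline{\cW}^\lambda_\mu = \cM_C(\bG,\bN)$ then follows (this is also Lemma \ref{lemma: first lemma appendix}). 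For the second assertion, the good condition upgrades the above to: every non-constant generator has degree $\geq 2$, so $\cA(\lambda,\mu)$ has no elements of degree one; since an ugly theory always does (the $t^1$-coefficient of the monopole formula is then positive, because $P_{\bG,\gamma}(0) = 1$), it follows that $(\bG,\bN)$ is good. Finally, since the Poisson bracket has degree $-2$ and the irrelevant ideal is now generated in degrees $\geq 2$, that ideal is Poisson, so the map $\cA(\lambda,\mu)\twoheadrightarrow\kk$ is Poisson.

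There is no substantive obstacle here; the only point requiring care is the reduction to generators --- that whether $(\bG,\bN)$ is good or ugly can be read off the minuscule monopole operators $\M^\pm_\bm(f)$ rather than from all coweights of $\bG$ --- which is exactly the content of the discussion preceding the Lemma. I would also treat the $\bm = \mathbf{0}$ case explicitly (there $\varpi_{\mathbf{0}} = 0$ and $2\Delta(0) = 0$, but the associated FMOs are honest symmetric polynomials, whose non-constant part lies in degrees $\geq 2$ and hence never obstructs conicity or goodness) rather than folding it into the statement about all non-zero $\bm$.
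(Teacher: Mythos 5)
Your proposal is correct and follows essentially the same route as the paper: the Corollary is obtained by combining generation of $\cA(\lambda,\mu)$ by the dressed minuscule monopole operators $\M^{\pm}_{\bm}(f)$, the degree formula \eqref{appendix 3}, and the preceding Lemma computing $2\Delta(\pm\varpi_\bm) = \bm\cdot(\bw - C\bv) + \bm\cdot(C\bm)$, together with the conical/good-or-ugly equivalence and the degree $-2$ Poisson bracket argument of Lemma \ref{lemma: first lemma appendix}. Your explicit justification that checking generators suffices (via positivity of the $t^1$-coefficient of the monopole formula in the ugly case) and your separate treatment of $\bm = \mathbf{0}$ merely spell out points the paper leaves implicit.
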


\begin{Remark}
Using the above lemma, it is easy to see that the surjection $\cA(\lambda,\mu) \twoheadrightarrow \cA(\lambda',\mu)$ from Theorem \ref{thm:closed-embedding-of-KM-slices} respects the gradings discussed above.  Indeed, recall that $\M_\bm^\pm (f) \mapsto \M_\bm^\pm(\tilde{f})$ or zero.  Assume the former case. For any $\mathbf{0} \leq \bm$, it follows by construction that
$$
\bm \cdot (\bw - C \bv)  = \bm \cdot (\bw' - C \bv'),
$$
and therefore $\deg \M_\bm^\pm (f) = \deg \M_\bm^\pm(\tilde{f})$ by \eqref{appendix 3} and the lemma.
\end{Remark}

Finally, we conclude by studying the finite and affine type cases.  Recall that in affine type, the \emph{level} of a coweight $\mu$ is its pairing with the imaginary root $\delta$. Following the notation of \cite[Theorem 4.8]{Kac}, recall that we can write $\delta = \sum_{i \in I} a_i \simpleroot_i $ for some positive integers $a_i$.
\begin{Theorem}
\label{thm: conicity finite affine}
Let $\lambda, \mu \in \coweightlattice_{++}$ be dominant coweights with $\lambda > \mu$.  
\begin{enumerate}
\item If the quiver is finite type, then $\mu$ is good for $\lambda$. 

\item If the quiver is affine type, then:
\begin{enumerate}[(a)]
\item If the level of $\mu$ is $\geq 2$, then $\mu$ is good for $\lambda$.

\item If the level of $\mu$ is 1, then $\mu$ satisfies the conicity condition for $\lambda$, but is not good for $\lambda$.

\item If the level of $\mu$ is 0, then $\mu$ does not satisfy the conicity condition for $\lambda$. 
\end{enumerate}
\end{enumerate}
\end{Theorem}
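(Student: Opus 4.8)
The plan is to unwind the conicity and good conditions into an explicit inequality for a quadratic form and then analyze that form using the invariant bilinear form of the root system. Taking $\lambda' = \mu$ in Definitions~\ref{def:conical-coweights} and~\ref{def:good-coweights} we have $\bv'' = \bv$, so what must be checked is that
\[
q(\bu)\ :=\ \bu\cdot(\bw - C\bv)\ +\ \bu\cdot(C\bu)\ \geq\ 1\quad(\text{conicity}),\qquad \geq\ 2\quad(\text{good}),
\]
for every nonzero $\bu = (\su_i)_{i\in I}$ with $\mathbf{0}\leq\bu\leq\bv$; by the Lemma in~\S\ref{sec:appendix2} this $q(\bu)$ is exactly $2\Delta(\pm\varpi_\bu)$. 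Since $\lambda - \mu = \sum_i\sv_i\simplecoroot_i$ and $\sw_i = \langle\lambda,\simpleroot_i\rangle$, one computes $(\bw - C\bv)_i = \langle\mu,\simpleroot_i\rangle$; writing $\bar\nu = \bar\nu(\bu) = \sum_i\su_i\simpleroot_i$ (a non-negative combination of simple roots) and letting $(\cdot,\cdot)$ be the invariant form normalized by $(\simpleroot_i,\simpleroot_j) = C_{ij}$, this gives the clean expression $q(\bu) = \langle\mu,\bar\nu\rangle + (\bar\nu,\bar\nu)$. Both summands are $\geq 0$: the first because $\mu$ is dominant, the second because the form is positive (semi)definite.

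In finite type the root lattice with this normalization is even and positive definite, so $(\bar\nu,\bar\nu)\geq 2$ for every nonzero $\bar\nu$, hence $q(\bu)\geq 2$; this proves (1). In affine type the radical of the form is $\ZZ\delta$, and the quotient lattice carries the even positive-definite finite-type form, so $(\bar\nu,\bar\nu)\geq 2$ unless $\bar\nu = k\delta$ with $k\geq 1$, in which case $(\bar\nu,\bar\nu) = 0$ and $q(\bu) = k\langle\mu,\delta\rangle = k\cdot\mathrm{level}(\mu)$. Therefore: if $\mathrm{level}(\mu)\geq 2$ then $q(\bu)\geq 2$ always, giving (2a); if $\mathrm{level}(\mu) = 1$ then $q(\bu)\geq 1$ always, with equality precisely when $\bar\nu(\bu) = \delta$, i.e.\ $\bu = \ba := (a_i)_{i\in I}$; and if $\mathrm{level}(\mu) = 0$ then $q(\ba) = 0 + 0 = 0 < 1$. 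Thus (2b) and (2c) reduce to showing that $\ba \leq \bv$ whenever $\mathrm{level}(\mu)\in\{0,1\}$, so that $\ba$ is an admissible index (note $\lambda > \mu$ guarantees $\bv\neq\mathbf{0}$, so there is something to check).

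For $\ba\leq\bv$: if $\mathrm{level}(\mu) = 0$ then $\lambda$ too has level $0$ (levels agree since $\lambda - \mu\in\corootlattice$), and a dominant level-$0$ coweight pairs to $0$ with every simple root; hence $(C\bv)_i = \langle\lambda - \mu,\simpleroot_i\rangle = 0$ for all $i$, so $\bv\in\ker C\cap\ZZ^I_{\geq 0} = \ZZ_{\geq 0}\ba$ and $\bv\neq\mathbf{0}$ forces $\bv\geq\ba$. If $\mathrm{level}(\mu) = 1$, dominance and level $1$ force $\langle\mu,\simpleroot_i\rangle = \delta_{i,i_0}$ for a unique node $i_0$ with $a_{i_0} = 1$; combining $(C\bv)_i = \langle\lambda,\simpleroot_i\rangle - \delta_{i,i_0}\geq -\delta_{i,i_0}$ with $\ba\cdot(C\bv) = \langle\lambda - \mu,\delta\rangle = 0$ shows that either $C\bv = \mathbf{0}$ — and then $\bv\in\ZZ_{\geq 1}\ba$ as before — or $C\bv = e_{j_0} - e_{i_0}$ for a second mark-one node $j_0\neq i_0$. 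The second possibility does not occur: the map $\phi\colon\coweightlattice\to\ZZ^I$, $\nu\mapsto(\langle\nu,\simpleroot_i\rangle)_i$, sends the $i$-th fundamental coweight $\Lambda_i^\vee$ to $e_i$ and carries $\corootlattice$ onto $\operatorname{im}(C|_{\ZZ^I})$ with kernel $\ZZ\delta^\vee\subseteq\corootlattice$, so $C\bv = e_{j_0} - e_{i_0}$ would force $\Lambda_{j_0}^\vee - \Lambda_{i_0}^\vee\in\corootlattice$, contradicting the standard fact that $\coweightlattice/\corootlattice$ acts simply transitively on the set of mark-one nodes of the affine Dynkin diagram (so distinct mark-one nodes represent distinct classes). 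This finishes (2b) and (2c).

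I expect the last paragraph to be where the real work lies: the finite case and the affine cases with $\mathrm{level}(\mu)\geq 1$ are immediate on the root-lattice side once the form is identified, but the bound $\ba\leq\bv$ — and especially ruling out the configuration $C\bv = e_{j_0} - e_{i_0}$ — needs the structural input about mark-one nodes and $\coweightlattice/\corootlattice$. Two points to be careful about: the integrality claims for $\phi$ (equivalently, that $\delta^\vee$ generates $\coweightlattice\cap\RR\delta^\vee$, which uses that $\delta^\vee$ pairs to $1$ with a level-one weight), and checking that the simple-transitivity statement is available uniformly — it is, but if one prefers it can simply be verified against the list $A_n^{(1)}, D_n^{(1)}, E_6^{(1)}, E_7^{(1)}, E_8^{(1)}$ of symmetric affine types.
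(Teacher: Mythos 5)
Your proof is correct, and its core is the same as the paper's: take $\lambda'=\mu$ so $\bv''=\bv$, use the appendix lemma to rewrite $2\Delta(\pm\varpi_\bu)$ as $\bu\cdot(\bw-C\bv)+\bu\cdot(C\bu)=\langle\mu,\bar\nu\rangle+(\bar\nu,\bar\nu)$, bound the linear term by $0$ via dominance of $\mu$ and the quadratic term by $2$ via evenness/positive-definiteness of the root-lattice form away from $\ZZ\delta$, and then treat the imaginary-root direction separately, where $q(k\ba)=k\cdot\mathrm{level}(\mu)$. The one place you genuinely diverge is the justification that $\ba\leq\bv$ when $\mathrm{level}(\mu)\in\{0,1\}$ (so that multiples of $\delta$ are admissible and conicity/goodness actually fails): the paper gets this by quoting Kac, \S 12.4--12.6, for the fact that two comparable dominant coweights of level $0$ or $1$ in symmetric affine type differ by a positive multiple of $\delta$, whereas you re-derive the needed consequence directly -- level $0$ forces $C\bv=\mathbf{0}$ hence $\bv\in\ZZ_{\geq 1}\ba$, and in level $1$ you rule out the alternative $C\bv=e_{j_0}-e_{i_0}$ using that distinct mark-one nodes give distinct classes in $\coweightlattice/\corootlattice$. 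Your route is more self-contained but imports that standard structural fact about special vertices (and the small lattice point about $\ker\phi\cap\coweightlattice=\ZZ\delta^\vee$ that you flag yourself); the paper's citation is shorter and sidesteps these verifications. Both are valid, and your analysis of which $\bu$ attain the bounds is slightly more explicit than the paper's reduction ``it suffices to consider $\bm=\delta$''.
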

In particular, part (2c) above demonstrates that for general symmetric Kac-Moody types, $\mu$ being dominant does not necessarily imply the conicity condition. 
\begin{proof}

The fact that $\mu$ is dominant means precisely that $\bm \cdot (\bw - C \bv) \geq 0$ for any $\bm \geq \mathbf{0}$.  Note also that we can identify $\bm \cdot (C \bm)$ with the standard bilinear form on the root lattice.  In finite type we therefore have $\bm \cdot (C \bm) \geq 2$ for any non-zero $\bm$.  This implies that $\mu$ is good for $\lambda$, since for any   $\mathbf{0} < \bm \leq \bv$ we  see that
$$
\bm \cdot (\bw - C \bv) + \bm \cdot (C \bm) \geq 0 +  2 = 2
$$
The same argument applies in affine type, except in those cases where $\bm$ corresponds to a multiple of the imaginary root $\delta$, in which case $\bm \cdot (C\bm) = 0$ \cite[Theorem 4.3]{Kac}.  Note that we still require $\mathbf{0} < \bm \leq \bv$, and these $\bm$ corresponding to multiples of $\delta$ are thus only allowed if $a_i \leq \sv_i$ for all $i\in I$.  To complete the proof, we will study the latter case more carefully.

First observe that it suffices to study the case where $\bm$ corresponds exactly to $\delta$, meaning that $\sm_i = a_i$ for all $i\in I$.  In this case $\bm \cdot (\bw - C \bv)$ is exactly the level of $\mu$. So on the one hand, if the level of $\mu$ is $\geq 2$, then we have
$$
\bm \cdot (\bw - C \bv) + \bm \cdot (C \bm) \geq 2 + 0 = 2,
$$
This shows that $\mu$ is good for $\lambda$.  On the other hand, suppose that the level of $\mu$ is 0 or 1.  Then $\lambda$ also has level 0 (resp.~1).  Since $\lambda > \mu$ are both dominant and we are in symmetric affine type, the only possibility is that $\mu = \lambda - k \delta$ for some integer $k> 0$ (this follows from \cite[\S 12.4--12.6]{Kac}).  This automatically implies that the vector $\bm = (a_i)_{i\in I}$ corresponding to $\delta$ satisfies $\mathbf{0} < \bm \leq \bv$, and we have
$$
\bm \cdot (\bw - C \bv) + \bm \cdot(C \bm)  = 0 \ \ \text{(resp.~1)}
$$
Since this is equal to $2 \Delta(\varpi_\bm)$, this shows that $\mu$ does not satisfy the conicity condition for $\lambda$ (resp.~satisfies the conicity condition, but not the good condition).
\end{proof}

%%% Local Variables:
%%% mode: latex
%%% TeX-master: "main"
%%% End:

\bibliographystyle{amsalpha}
\bibliography{references}

\newcommand{\etalchar}[1]{$^{#1}$}
\providecommand{\bysame}{\leavevmode\hbox to3em{\hrulefill}\thinspace}
\providecommand{\MR}{\relax\ifhmode\unskip\space\fi MR }
% \MRhref is called by the amsart/book/proc definition of \MR.
\providecommand{\MRhref}[2]{%
  \href{http://www.ams.org/mathscinet-getitem?mr=#1}{#2}
}
\providecommand{\href}[2]{#2}
\begin{thebibliography}{KWWY14}

\bibitem[BCG{\etalchar{+}}20]{BCGHSZ}
Antoine Bourget, Santiago Cabrera, Julius Grimminger, Amihay Hanany, Marcus
  Sperling, Anton Zajac, and Zhenghao Zhong, \emph{The {H}iggs
  mechanism---{H}asse diagrams for symplectic singularities}, J. High Energy
  Phys. (2020), no.~1, 157, 65.

\bibitem[BDF16]{BDF}
Alexander Braverman, Galyna Dobrovolska, and Michael Finkelberg,
  \emph{Gaiotto-{W}itten superpotential and {W}hittaker {D}-modules on
  monopoles}, Adv. Math. \textbf{300} (2016), 451--472.

\bibitem[BDG17]{BDG}
Mathew Bullimore, Tudor Dimofte, and Davide Gaiotto, \emph{The {C}oulomb branch
  of 3d {$\cN=4$} theories}, Comm. Math. Phys. \textbf{354} (2017), no.~2,
  671--751.

\bibitem[BF10]{BF}
Alexander Braverman and Michael Finkelberg, \emph{Pursuing the double affine
  {G}rassmannian. {I}. {T}ransversal slices via instantons on
  {$A_k$}-singularities}, Duke Math. J. \textbf{152} (2010), no.~2, 175--206.

\bibitem[BF12]{BF-II}
\bysame, \emph{Pursuing the double affine {G}rassmannian {II}: {C}onvolution},
  Adv. Math. \textbf{230} (2012), no.~1, 414--432. \MR{2900549}

\bibitem[BF13]{BF-III}
\bysame, \emph{Pursuing the double affine {G}rassmannian {III}: {C}onvolution
  with affine zastava}, Mosc. Math. J. \textbf{13} (2013), no.~2, 233--265,
  363. \MR{3134906}

\bibitem[BFN18]{BFN1}
Alexander Braverman, Michael Finkelberg, and Hiraku Nakajima, \emph{Towards a
  mathematical definition of {C}oulomb branches of 3-dimensional {$\cN=4$}
  gauge theories, {II}}, Adv. Theor. Math. Phys. \textbf{22} (2018), no.~5,
  1071--1147.

\bibitem[BFN19]{BFN2}
\bysame, \emph{Coulomb branches of {$3d$} {$\cN=4$} quiver gauge theories and
  slices in the affine {G}rassmannian}, Adv. Theor. Math. Phys. \textbf{23}
  (2019), no.~1, 75--166, With two appendices by Braverman, Finkelberg, Joel
  Kamnitzer, Ryosuke Kodera, Nakajima, Ben Webster and Alex Weekes.

\bibitem[BS21]{BS}
Gwyn Bellamy and Travis Schedler, \emph{Symplectic resolutions of quiver
  varieties}, Selecta Math. (N.S.) \textbf{27} (2021), no.~3, Paper No. 36, 50.

\bibitem[CH18]{CH}
Santiago Cabrera and Amihay Hanany, \emph{Quiver subtractions}, J. High Energy
  Phys. (2018), no.~9, 008, front matter+20.

\bibitem[CHZ14]{CHZ}
Stefano Cremonesi, Amihay Hanany, and Alberto Zaffaroni, \emph{Monopole
  operators and {H}ilbert series of {C}oulomb branches of {$3d$ $\cN=4$} gauge
  theories}, JHEP \textbf{1401} (2014), no.~5, 34 pp.

\bibitem[Fin18]{FinkelbergICM}
Michael Finkelberg, \emph{Double affine {G}rassmannians and {C}oulomb branches
  of {$3d$} {$\cN=4$} quiver gauge theories}, Proceedings of the
  {I}nternational {C}ongress of {M}athematicians---{R}io de {J}aneiro 2018.
  {V}ol. {II}. {I}nvited lectures, World Sci. Publ., Hackensack, NJ, 2018,
  pp.~1283--1302.

\bibitem[FKMM99]{FKMM}
Michael Finkelberg, Alexander Kuznetsov, Nikita Markarian, and Ivan
  Mirkovi\'{c}, \emph{A note on a symplectic structure on the space of
  {$G$}-monopoles}, Comm. Math. Phys. \textbf{201} (1999), no.~2, 411--421.

\bibitem[FM99]{Finkelberg-Mirkovic}
Michael Finkelberg and Ivan Mirkovi\'{c}, \emph{Semi-infinite flags. {I}.
  {C}ase of global curve {$\bold P^1$}}, Differential topology,
  infinite-dimensional {L}ie algebras, and applications, Amer. Math. Soc.
  Transl. Ser. 2, vol. 194, Amer. Math. Soc., Providence, RI, 1999,
  pp.~81--112.

\bibitem[FZ99]{FZ}
Sergey Fomin and Andrei Zelevinsky, \emph{Double {B}ruhat cells and total
  positivity}, J. Amer. Math. Soc. \textbf{12} (1999), no.~2, 335--380.

\bibitem[GH20]{GH}
Julius~F. Grimminger and Amihay Hanany, \emph{Hasse diagrams for 3d {$\cN = 4$}
  quiver gauge theories---{I}nversion and the full moduli space}, J. High
  Energy Phys. (2020), no.~9, 159, 50.

\bibitem[GKLO05]{GKLO}
Anton Gerasimov, Sergey Kharchev, Dmitrii Lebedev, and Sergey Oblezin, \emph{On
  a class of representations of the {Y}angian and moduli space of monopoles},
  Comm. Math. Phys. \textbf{260} (2005), no.~3, 511--525.

\bibitem[GW09]{GW}
Davide Gaiotto and Edward Witten, \emph{{$S$}-duality of boundary conditions in
  {$\cN=4$} super {Y}ang-{M}ills theory}, Adv. Theor. Math. Phys. \textbf{13}
  (2009), no.~3, 721--896.

\bibitem[Kac90]{Kac}
Victor~G. Kac, \emph{Infinite-dimensional {L}ie algebras}, third ed., Cambridge
  University Press, Cambridge, 1990.

\bibitem[Kal06]{Kaledin}
Dmitry Kaledin, \emph{Symplectic singularities from the {P}oisson point of
  view}, J. Reine Angew. Math. \textbf{600} (2006), 135--156.

\bibitem[KWWY14]{KWWY}
Joel Kamnitzer, Ben Webster, Alex Weekes, and Oded Yacobi, \emph{Yangians and
  quantizations of slices in the affine {G}rassmannian}, Algebra Number Theory
  \textbf{8} (2014), no.~4, 857--893.

\bibitem[MW19]{MuthiahWeekes}
Dinakar {Muthiah} and Alex {Weekes}, \emph{{Symplectic leaves for generalized
  affine Grassmannian slices}}, arXiv e-prints (2019), arXiv:1902.09771, To
  appear in {A}nnales scientifiques de l'{\'E}cole normale sup{\'e}rieure.

\bibitem[Nak09]{Nakajima-Branching}
Hiraku Nakajima, \emph{Quiver varieties and branching}, SIGMA Symmetry
  Integrability Geom. Methods Appl. \textbf{5} (2009), Paper 003, 37.
  \MR{2470410}

\bibitem[{Nak}15]{NakQuestions}
Hiraku {Nakajima}, \emph{{Questions on provisional Coulomb branches of
  $3$-dimensional $\mathcal N=4$ gauge theories}}, arXiv e-prints (2015),
  arXiv:1510.03908.

\bibitem[Nak16]{NakTowards}
Hiraku Nakajima, \emph{Towards a mathematical definition of {C}oulomb branches
  of 3-dimensional {$\cN=4$} gauge theories, {I}}, Adv. Theor. Math. Phys.
  \textbf{20} (2016), no.~3, 595--669.

\bibitem[{Nak}18]{NakSatake}
Hiraku {Nakajima}, \emph{{Towards geometric Satake correspondence for Kac-Moody
  algebras -- Cherkis bow varieties and affine Lie algebras of type $A$}},
  arXiv e-prints (2018), arXiv:1810.04293.

\bibitem[NT17]{NT}
Hiraku Nakajima and Yuuya Takayama, \emph{Cherkis bow varieties and {C}oulomb
  branches of quiver gauge theories of affine type {$A$}}, Selecta Math. (N.S.)
  \textbf{23} (2017), no.~4, 2553--2633.

\bibitem[{Wee}19]{Weekes1}
Alex {Weekes}, \emph{{Generators for Coulomb branches of quiver gauge
  theories}}, arXiv e-prints (2019), arXiv:1903.07734.

\bibitem[Wee22]{Weekes2}
Alex Weekes, \emph{Quiver gauge theories and symplectic singularities}, Adv.
  Math. \textbf{396} (2022), Paper No. 108185, 21 pp.

\end{thebibliography}

\end{document}